\documentclass[12pt]{amsart}

\usepackage{amssymb}

\newtheorem{theorem}{Theorem}[section]

\newtheorem{lemma}[theorem]{Lemma}

\newtheorem{conjecture}{Conjecture}

\theoremstyle{definition}

\newtheorem{remark}[theorem]{Remark}

\numberwithin{equation}{section}

\newcommand{\PSL}{\mathrm{PSL}}

\newcommand{\ZZ}{\mathbb{Z}}

\newcommand{\RR}{\mathbb{R}}

\newcommand{\HH}{\mathbb{H}}

\newcommand{\PSLR}{\PSL_2(\RR)}
\newcommand{\PSLZ}{\PSL_2(\ZZ)}

\title{Local square mean in the hyperbolic circle problem and sums of Salié sums}

\author{
    András Biró\\
    HUN-REN A. Rényi Institute of Mathematics\\
    \texttt{biro.andras@renyi.hu}
}

\thanks{Research partially supported by the NKFIH (National Research, Development and Innovation Office) Grants No. K135885, K143876, and by the Rényi Intézet Lendület Automorphic Research Group}

\subjclass{11F72}
\keywords{Hyperbolic circle problem, pairs of quadratic forms, Salié sums}

\begin{document}

\begin{abstract}
Let $\Gamma\subseteq \PSLR$ be a finite volume Fuchsian group. The hyperbolic circle problem is the
estimation of the number of elements of the $\Gamma$-orbit of
$z$ in a hyperbolic circle around $w$ of radius $R$, where $z$ and
$w$ are given points of the upper half plane and $R$ is a
large number. An estimate with error term $e^{\frac 23R}$ is
known, and this has not been improved for any group. Recently, taking $
z=w$ and considering $\Gamma =\PSLZ$, we have shown the estimate $
e^{\left(\frac 9{14}+\epsilon\right)R}$  for the local $L^2$-norm of the error term, which is better than the pointwise bound. Here we improve the exponent
$\frac 9{14}$, conditionally on a twisted Linnik-Selberg-type conjecture for sums of Salié sums.

\end{abstract}

\maketitle

\markboth{András Biró}{Local square mean in the hyperbolic circle problem and sums of Salié sums}

\section{Introduction}

\subsection{Statement of the main result.} Let $\HH$ be the upper half plane. For $
z,w\in \HH$ let
\begin{equation}u(z,w)=\frac {\left|z-w\right|^2}{4\mbox{\rm Im$z$Im$
w$}},\label{1.1}\end{equation}
this is closely related to the hyperbolic distance $\rho (z,w)$ of $
z$
and $w$, namely we have $1+2u=\cosh\rho$. The elements of the
group $\PSLR$ act on $\HH$ by linear fractional transformations, these are isometries of the
hyperbolic plane. Let $d\mu_z=\frac {dxdy}{y^2}$, this measure is invariant with respect to
the action of $\PSLR$ on $\HH$. Let $\Gamma =\PSLZ$. For $z\in\HH$ and $X>2$ define
$$N\left(z,X\right):=\left|\left\{\gamma\in\Gamma :\hbox{\rm \ $4
u\left(\gamma z,z\right)+2\le X$}\right\}\right|,$$
this is the number of points $\gamma z$ in the hyperbolic circle
around $z$ of radius $\cosh^{-1}\left(X/2\right)$, so the estimation of this
quantity is called the hyperbolic circle problem. We know that
\[\left|N\left(z,X\right)-3X\right|=O_z\left(X^{\frac 23}\right),\]
this is an unpublished theorem of Selberg. Let ${\mathcal F}$ be
the closure of the standard fundamental domain of $\Gamma$, i.e.
\begin{equation}\mathcal F=\left\{z\in {\bf C}:\;\mbox{\rm Im$z>0$}
,\>-\frac 12\le\mbox{\rm Re$z$}\le\frac 12,\>\left|z\right|\ge 1\right
\}.\label{1.3}\end{equation}
In \cite[Theorem 1.1]{Biro} we proved the following
theorem.
\begin{theorem}\label{thm:1}
Let $\Gamma =\PSLZ$, let $\mathcal F$ be as in \eqref{1.3}, and let
$\Omega\subseteq\mathcal F$ be a compact set, then for any $c>\frac
9{14}$ we have
\begin{equation}\left(\int_{\Omega}\left(N\left(z,X\right)-3X\right
)^2d\mu_z\right)^{1/2}=O_{\Omega ,c}\left(X^c\right).\label{1.4}\end{equation}
\end{theorem}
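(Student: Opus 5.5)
The plan is to reduce the local $L^2$-norm to a short-interval mean and then handle it by the spectral theorem combined with an arithmetic treatment of pairs of quadratic forms.

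\textbf{Smoothing.} Since $N(z,X)$ is monotone in $X$, we may replace the sharp cutoff by a smooth weight $k$ on $u$ with transition width $X^{1-\delta}$. Sandwiching between two smoothed counts costs $O(X^{1-\delta})$, and we will choose $\delta$ so that this is at most $X^{c}$. For the smoothed count $N_k(z,X)=\sum_{\gamma}k(u(\gamma z,z))$ we apply the pretrace formula:
\[
N_k(z,X)=\sum_j h(t_j)|u_j(z)|^2+\frac1{4\pi}\int h(t)|E(z,\tfrac12+it)|^2dt .
\]
The term $\lambda_0=0$ gives $3X$ up to acceptable error, so we need to bound the $L^2(\Omega)$-norm of the remainder. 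Here $h(t)$ behaves like $X^{1/2}|t|^{-3/2}$ for $|t|\le X^{\delta}$ and decays rapidly beyond that.

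\textbf{Squaring and integrating.} Squaring and integrating over $\Omega$ produces $\sum_{j,l}h(t_j)\overline{h(t_l)}\int_\Omega|u_j|^2|u_l|^2\,d\mu$. The diagonal part, estimated through local Weyl law bounds on $\int_\Omega|u_j|^4$ (or simply sup-norm bounds), gives $X\sum_{t_j\le X^\delta}t_j^{-3}\cdot(\dots)$, which is acceptable. To handle the full double sum, we instead go back to the geometric side. We write
\[
\int_\Omega N_k(z,X)^2\,d\mu_z=\sum_{\gamma_1,\gamma_2}\int_\Omega k(u(\gamma_1z,z))\,k(u(\gamma_2z,z))\,d\mu_z .
\]
Each $\gamma$ corresponds to an integral binary quadratic form $(c,d-a,-b)$ with discriminant $(a+d)^2-4$. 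The condition $u(\gamma z,z)\approx X$ means that $z$ lies near a geodesic-type curve, namely the level set of $|Q_\gamma(z,1)|/\operatorname{Im} z$ of size about $\sqrt{X}$. The integral over $z$ of the product therefore reduces to counting pairs of quadratic forms $(Q_1,Q_2)$ according to their discriminants and their mixed invariant $\operatorname{tr}(\gamma_1\gamma_2)$.

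\textbf{Counting pairs of forms.} Fix the traces $t_1,t_2\ll X^{1/2}$ and the joint invariant. Counting pairs of forms with prescribed invariants amounts to counting representations by a ternary quadratic form. We split this count into a main term, which should reproduce $9X^2$ together with the off-diagonal spectral main terms, and an error term. The error term is a sum of Kloosterman/Salié-type sums, obtained from a circle-method or theta-lift expansion of the local densities. Estimating these sums with the Weil bound, uniformly in the traces and after summing over them, gives a saving which, combined with the smoothing loss, should optimize at the exponent $\frac9{14}$. The balancing comes from setting the smoothing loss $X^{1-\delta}$ equal to the error $X^{(1+\delta)/2}\cdot X^{\text{(Weil loss)}}$.

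\textbf{Main obstacle.} The hardest step will be the uniform asymptotic count for pairs of forms, and in particular identifying the secondary main terms. These secondary terms must cancel exactly against the $3X$ cross terms and the continuous-spectrum contributions, and the error terms must be controlled with only the Weil bound on individual Salié sums, without cancellation in the sum over moduli. My first attempt at this step would be to evaluate the singular series explicitly for the ternary form $t_1t_2$-twisted discriminant, and to apply the Weil bound termwise.
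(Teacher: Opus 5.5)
Your outline does not reach $9/14$, because the step that carries all the weight is missing. After the sandwich smoothing you square the \emph{full} smoothed count over $\Omega$ and propose to evaluate $\sum_{\gamma_1,\gamma_2}\int_\Omega k(u(\gamma_1 z,z))\,k(u(\gamma_2 z,z))\,d\mu_z$ through pairs of quadratic forms, extracting main terms of size $X^2$ with an error $O(X^{9/7})$. This fails for two reasons.

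First, the reduction to the class numbers $h(d_1,d_2,t)$ comes from unfolding under simultaneous conjugation of $(\gamma_1,\gamma_2)$, and that requires integrating over a whole fundamental domain. Over a compact $\Omega$ the integral does not depend only on the class of the pair. You also cannot enlarge $\Omega$ to $\mathcal F$, since the full count is not square integrable near the cusp: the parabolic elements alone give $N(z,X)\gg y\sqrt X$.

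Second, even granting the unfolding, you would need an asymptotic for the pair counts, including all secondary main terms, to relative accuracy $X^{-5/7}$. You call this the ``main obstacle'', but it is the whole problem, and nothing in the proposal addresses it. Your closing numerology does not produce $9/14$ either: $X^{1-\delta}=X^{(1+\delta)/2}$ gives $\delta=1/3$, i.e.\ Selberg's pointwise exponent $2/3$.

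The missing idea is the $J$-fold difference $N_{d,J}(z,X)=\sum_{j=0}^J(-1)^j\binom{J}{j}\int_1^2\eta_0(\tau)N(z,X-jd\tau)\,d\tau$ with $J$ fixed but large. It annihilates $3X$ and makes the identity, parabolic and elliptic contributions negligible. Its cost is only $O(X/\sqrt d+X^{1/2+\epsilon})$ pointwise on $\Omega$: essentially only spectral parameters $|t_j|\ll X/d$ survive the differencing, and the local Weyl law bounds them by $X^{1/2}(X/d)^{1/2}$. That is far less than the cost $d$ of a sandwich of width $d$.

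What remains is purely hyperbolic, so by positivity $\int_\Omega$ may be replaced by $\int_{\mathcal F}$. Its essential part then unfolds to $\sum_{t_1,t_2,f}h(t_1^2-4,t_2^2-4,f)\sum_{j_1,j_2}a_{j_1,j_2}F_{X,d,j_1,j_2}(t_1,t_2,f)$. No main term has to be computed. An \emph{upper bound} for the class numbers, together with the cancellation in the alternating sum over $j_1,j_2$, gives $X^{\epsilon}d^{5/2}/\sqrt X$. Balancing $X^2/d$ against $d^{5/2}X^{-1/2}$ at $d=X^{5/7}$ yields $X^{9/7}$ for the square.

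Salié sums play no role in the exponent $9/14$. In this paper they appear only after Poisson summation in $f$, in the conditional improvement. There what is needed is cancellation over the moduli (Conjecture~\ref{conj:1}), not termwise bounds, which is why that improvement is only conditional.
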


The goal of the present paper is to show that it is
possible to improve the exponent in this theorem assuming
the following conjecture.

If $x,c$ are integers with $\gcd\left(x,c\right)=1$, then $\overline
x$ denotes the
multiplicative inverse of $x$ modulo $c$. For odd $c>0$ and any integers $
m,n$ introduce the Salié sums
\[T\left(m,n;c\right):=\sum_{x\:{\rm m}{\rm o}{\rm d}\:c,\gcd\left
(x,c\right)=1}\left(\frac xc\right)e\left(\frac {m\overline x+nx}
c\right),\]
where $\left(\frac xc\right)$ is the Jacobi symbol.

\begin{conjecture}\label{conj:1} {\normalfont (Twisted Linnik-Selberg for
Salié sums).} For every $\epsilon >0$ there is a $\delta >0$ such
that the following statement is true. If $C\ge 1$, $f$ is a
smooth function on the positive real axis vanishing
outside the interval $\left[C,2C\right]$, $L,K$ and $r$ are positive integers
satisfying $2|L$ and $\gcd\left(L,Kr\right)=1$, $B>0$ is a real number and we have
\[LK\left(1+B\right)\ll_{\delta}C^{\delta},\;x^jf^{\left(j\right)}\left
(x\right)\ll_{\delta ,j}C^{j\delta}\]
for every $x>0$ and every integer $j\ge 0$, then for every positive integers $
m,n$ and
every $\alpha\in\left[-B,B\right]$ we have
\[\frac 1C\sum_{c\equiv r\left({\rm m}{\rm o}{\rm d}\,L\right),\,
K|c}f\left(c\right)T\left(m,\overline Ln;c\right)e\left(\frac {\sqrt {
mn}}c\alpha\right)\ll_{\epsilon ,\delta}\left(mnC\right)^{\epsilon}
.\]
\end{conjecture}

This is analogous to \cite[Conjecture 1.1]{Br}, which
concerns Kloosterman sums instead of Salié sums.
The main difference is that we include here also a
multiplicative inverse $\overline L$ of an integer $L\ll_{\delta}
C^{\delta}$. It is
also a difference that $B$ and the modulus $k:=LK$ are
bounded in \cite[Conjecture 1.1]{Br}, but here we allow
them to grow very slowly with $C$, i.e. we assume that
these parameters are $\ll_{\delta}C^{\delta}$. Note also that we have a
smooth summation here instead of a sharp cut-off.

The main result of this paper is the following theorem.

\begin{theorem}\label{thm:2}Assume that Conjecture
\ref{conj:1} is true. Then, using the notation and
assumption of Theorem \ref{thm:1} we have \eqref{1.4}
with some $c<9/14$.
\end{theorem}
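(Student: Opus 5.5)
The plan is to rerun the proof of Theorem \ref{thm:1} from \cite{Biro} and change only the step that produced the exponent $\frac 9{14}$, putting Conjecture \ref{conj:1} in its place. The starting point is the same. First smooth $N(z,X)$ at scale $X^{1-\eta}$ in $X$. Then expand the smoothed count spectrally via the pretrace formula. This gives the main term $3X$, a negligible contribution from the small eigenvalues (there are none for $\PSLZ$ besides the constant), and an oscillatory sum over the Maass cusp forms and the Eisenstein series. Integrated against $d\mu_z$ over $\Omega$, the square mean becomes a weighted sum of terms $|u_j(z)|^2|u_k(z)|^2$ and cross terms, or equivalently, via the geometric side, a count of pairs $(\gamma_1,\gamma_2)$ with $u(\gamma_1 z,z)$ and $u(\gamma_2 z,z)$ both near $X$.

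As in \cite{Biro}, I would reduce this pair-counting problem to an arithmetic one. The pairs correspond to pairs of integral binary quadratic forms, namely the forms attached to $\gamma_1,\gamma_2$ evaluated at $z$. After integrating over $z\in\Omega$, the problem becomes counting representations of the invariants $(\mathrm{tr}\,\gamma_1,\mathrm{tr}\,\gamma_2,\mathrm{tr}\,\gamma_1\gamma_2^{-1})$, or equivalently of the discriminant-type invariants of the pair of forms, in a box of size roughly $X$. The count of pairs of quadratic forms with given joint invariants is a class-number-type quantity. It can be expanded, via a theta/half-integral weight correspondence (Shimura–Shintani type), as a sum over moduli $c$ of Salié sums $T(m,\overline L n;c)$ against a Bessel-type weight. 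In \cite{Biro} this sum was bounded using only Weil's bound $T\ll c^{1/2+\epsilon}$ (in fact the Salié sums are evaluated explicitly), and that estimate is exactly where $\frac 9{14}$ arises: it comes from balancing the trivial treatment of the $c$-sum against the smoothing loss $X^{1-\eta}$ and the spectral tail.

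The new step is to split the $c$-sum dyadically as $c\sim C$ with smooth partitions of unity $f$ supported on $[C,2C]$, and to Taylor expand or Mellin separate the Bessel weight. The phase then becomes $e(\sqrt{mn}\,\alpha/c)$ with $|\alpha|\le B\ll C^{\delta}$, and a smooth $f$ obeying $x^jf^{(j)}\ll C^{j\delta}$. Congruence conditions on $c$ modulo $2$ and modulo bounded level parameters produce the conditions $c\equiv r \pmod L$, $K\mid c$, and the twist $\overline L$, with $LK\ll C^{\delta}$. In the critical range of $C$ (a power of $X$), Conjecture \ref{conj:1} then saves the full square-root factor $C^{1/2}$ over Weil. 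Even a saving of $C^{\eta'}$ in one critical range improves the balancing, and re-optimizing the smoothing parameter gives some $c<9/14$. The outside ranges of $C$ and the transition regions of the Bessel function are handled as in \cite{Biro}.

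The main obstacle is the reduction in the second step. One has to show that the off-diagonal count really takes exactly the shape required by Conjecture \ref{conj:1}. In particular, all auxiliary parameters ($L$, $K$, $B$, and the derivative bounds on $f$) must stay below $C^{\delta}$, and the coupling of $m,n$ with the archimedean weight must be separable into the single oscillating factor $e(\sqrt{mn}\alpha/c)$. I would first try to separate variables by Fourier expanding the weight in $\sqrt{mn}/c$ over a short range, truncated at $B=C^{\delta}$. The tail would be controlled by the smoothness gained from the $X^{1-\eta}$ smoothing, with $\eta$ chosen small compared with $\delta$.
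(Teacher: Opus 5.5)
You have misidentified the bottleneck, and as a result the plan omits what the proof actually needs. In \cite{Biro} nothing is expanded into Salié sums and Weil's bound is never used. The hyperbolic part \eqref{68} is bounded using only an upper bound for $h(t_1^2-4,t_2^2-4,f)$, which gives $I_{d,X,J}(\tau)\ll X^{\epsilon}d^{5/2}/\sqrt X$. The exponent $\frac 9{14}$ comes from balancing this against the smoothing error $X^2/d$ at $d=X^{5/7}$.

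So you must prove the power saving \eqref{2.2} for all of $I_{d,X,J}(\tau)$. A saving in one critical range is not enough, and the non-critical ranges cannot be ``handled as in \cite{Biro}'': that reproduces exactly $X^{\epsilon}d^{5/2}/\sqrt X$. The paper needs the sharper Lemma \ref{lem:1} and Lemmas \ref{lem:2}--\ref{lem:8} and \ref{lem:11,941} to get a power saving in each of these cases:
(i) $\min(a_i,\sqrt X-a_i)<X^{1/2-\alpha}$;
(ii) $|t_1-t_2|<X^{1/2-\alpha}$;
(iii) $S(t_1^2-4,t_2^2-4,f^2)$, $G_{t_1,t_2,f}$ or $R_{t_1,f}$ is large;
(iv) the M\"obius variable exceeds $X^{\xi}$.
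These truncations are also what keep $L=4\gamma^2k^2$, $K$ and $B$ of size $X^{O(\epsilon)}$, as Conjecture \ref{conj:1} demands.

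Your ``Shimura--Shintani expansion against a Bessel weight'' is asserted without a formula and does not visibly produce that shape. In the paper the Salié sums arise concretely, in three steps. The explicit formula of \cite{biroClass} makes $h$ essentially $\sum_{D\mid f^2-(t_1^2-4)(t_2^2-4)}\left(\frac{t_1^2-4}{D}\right)$, with bad primes frozen by congruences modulo $\gamma(G,R)$. Reciprocity then switches to complementary divisors so that $D\le 2X$ (Lemma \ref{lem:12,9}). Finally, Poisson summation in $f$ modulo $\gamma D$ plus Gauss sums gives $T\bigl((t_1^2-4)(N/\lambda)^2,\overline{4\gamma^2k^2}(t_2^2-4);D/\lambda\bigr)$ (Lemma \ref{lem:14,9}). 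These sums are weighted by the Fourier transform of $(B(S_0,T_0)-F)^{3/2}$ (Lemma \ref{lem:11,95}), whose phase is $e(\sqrt{mn}\,\alpha/c)$ with $\alpha=\pm h_{\kappa}(t_1,t_2)/(\gamma k)\ll X^{O(\epsilon)}$.

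The decisive missing idea is the treatment of the main term. After Poisson summation the zero frequency $N=0$ gives, for fixed $(j_1,j_2)$, no saving over the trivial bound $X^{\epsilon}d^{5/2}/\sqrt X$; in a spectral or half-integral weight formulation this is the Eisenstein or singular-series part. Conjecture \ref{conj:1} cannot touch it, since it involves $T(0,n;c)$, i.e.\ $m=0$.

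The paper removes this term only through the alternating weights $a_{j_1,j_2}$ of the $J$-fold differencing built into $N_{d,J}$. The quantities \eqref{6711}--\eqref{6714} are $\ll X^{-\beta_0}$, so the differenced sums $G_1,G_2$ are $\ll X^{-\beta_0}$; for $q=2$ this uses $|t_1-t_2|\gg X^{1/2-\alpha}$ and \eqref{741}. Your proposal never exploits the difference structure in $j_1,j_2$. Without it, even the full strength of Conjecture \ref{conj:1} on the nonzero frequencies leaves the bound for $I_{d,X,J}(\tau)$ at $X^{\epsilon}d^{5/2}/\sqrt X$, so no exponent below $\frac 9{14}$ follows.
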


\begin{remark}\label{rem:2}
In fact a weaker assumption than Conjecture
\ref{conj:1} would be enough for the same conclusion. We
do not give the details here in the Introduction, but see Lemma \ref{lem:13}.
\end{remark}

\begin{remark}
We do not specify the value of $c$ in Theorem \ref{thm:2}, but it would be possible following the steps of the
proof.
\end{remark}

\begin{remark}The statement of Theorem \ref{thm:1} is
valid for any finite index subgrup $\Gamma$ of $\PSLZ$ (including
noncongruence subgroups) in place of $\PSLZ$. Indeed, the proof of this theorem given in
\cite{Biro} uses only an upper bound for the class
numbers of pairs of quadratic forms, hence that proof
generalizes easily for every finite index subgroup of
$\PSLZ$. The main novelty of this paper is that instead of
an upper bound we apply the explicit formula proved for these class numbers in
\cite{biroClass} (this possibility was mentioned already at
the end of \cite[Subsection 1.2]{Biro}). Such an explicit formula is not known in
the case of finite index subgruoups, so we cannot
improve the exponent $9/14$ even conditionally in the case
of finite index subgrups of $\PSLZ$.
\end{remark}

\begin{remark}If we write a general finite volume Fuchsian group
in place of $\PSLZ$, we cannot prove a better bound for
the left-hand side of \eqref{1.4} than $X^{2/3}$, which follows from the pointwise bound.
\end{remark}

\begin{remark}Partial result towards \cite[Conjecture 1.1]{Br} are
proved in \cite{St}. Probably similar partial results could
be proved in connection with our Conjecture \ref{conj:1}
using the half-integral weight Kuznetsov formula instead
of the integral weiight Kuznetsov formula. In addition,
an averaged form of Conjecture \ref{conj:1} would
suffice (see Remark \ref{rem:2}), therefore it might be
possible to prove Theorem \ref{thm:2} unconditionally by
developing an averaged version of the analogues of the
arguments in \cite{St}. It would be interesting to
investigate this possibility in the future.
\end{remark}

\subsection{Outline of the proof.} As in \cite{Biro}, we take an integer $
J\ge 2$, it will
be fixed but large. We also take a parameter $d$, we assume $X^{\frac
23+\frac 1{100}}\le d\le X^{99/100}$, and we consider the sum
\[N_{d,J}\left(z,X\right):=\sum_{j=0}^J\left(-1\right)^j\left(\begin{array}{c}
J\\
j\end{array}
\right)\int_1^2\eta_0\left(\tau\right)N\left(z,X-jd\tau\right)d\tau
,\]
where $\eta_0$ is a given nonnegative smooth function on $\left(0
,\infty\right)$ such that $\eta_0\left(\tau\right)=0$
for $\tau\notin\left[1,2\right]$, and $\int_1^2\eta_0\left(\tau\right
)d\tau =1.$ It is proved in \cite{Biro}
that we have with any $\epsilon >0$ that
\[N\left(z,X\right)-3X=N_{d,J,{\rm h}{\rm y}{\rm p}}\left(z,X\right
)+O_{\Omega ,\epsilon}\left(\frac X{\sqrt {d}}+X^{\frac 12+\epsilon}\right
)\]
for $z\in\Omega$, where $N_{d,J,{\rm h}{\rm y}{\rm p}}\left(z,X\right
)$ is the contribution of the hyperbolic
$\gamma\in\Gamma$ to $N_{d,J}\left(z,X\right)$. It is proved in \cite{Biro} that
\begin{equation}\int_{\mathcal F}\left(N_{d,J,{\rm h}{\rm y}{\rm p}}\left
(z,X\right)\right)^2d\mu_z\ll_{\Omega ,\epsilon}X^{\epsilon}\frac {
d^{5/2}}{\sqrt {X}},\label{67}\end{equation}
and then we get Theorem \ref{thm:1} taking $d=X^{5/7}$.

In this paper we will improve \eqref{67} conditionally.
We gave an expression in \cite{Biro} for the left-hand
side of \eqref{67} whose most essential part is an expression of type

\begin{equation}\sum_{t_1,t_2,f}h\left(t_1^2-4,t_2^2-4,f\right)\sum_{
j_1,j_2=0}^J\left(-1\right)^{j_1+j_2}\left(\begin{array}{c}
J\\
j_1\end{array}
\right)\left(\begin{array}{c}
J\\
j_2\end{array}
\right)F_{X,d,j_1,j_2}\left(t_1,t_2,f\right),\label{68}\end{equation}
where $t_1,t_2>2$ and $f$ run over integers, $F_{X,d,j_1,j_2}\left
(t_1,t_2,f\right)$ is
an analytic expression, and $h\left(t_1^2-4,t_2^2-4,f\right)$ has the
following arithmetic meaning. If $d_1,d_2,t\in\mathbb Z$, then
$h\left(d_1,d_2,t\right)$ denotes the number of  $SL_2(\mathbb Z)$-equivalence
classes of pairs $\left(Q_1,Q_2\right)$ of quadratic forms $Q_i\left
(X,Y\right)=A_iX^2+B_iXY+C_iY^2$ with
integer coefficients satisfying that the discriminant of
$Q_i$ is $d_i$, and the codiscriminant $B_1B_2-2A_1C_2-2A_2C_1$ of
$Q_1$ and $Q_2$ is $t$.

It is proved in \cite{Biro} that
\begin{equation}\sum_{t_1,t_2,f}h\left(t_1^2-4,t_2^2-4,f\right)\left
|\sum_{j_1,j_2=0}^J\left(-1\right)^{j_1+j_2}\left(\begin{array}{c}
J\\
j_1\end{array}
\right)\left(\begin{array}{c}
J\\
j_2\end{array}
\right)F_{X,d,j_1,j_2}\left(t_1,t_2,f\right)\right|,\label{69}\end{equation}
which is clearly an upper bound for \eqref{68}, is
$\ll X^{\epsilon}\frac {d^{5/2}}{\sqrt {X}}$. To prove this estimate we simply use an
upper bound for $h\left(t_1^2-4,t_2^2-4,f\right)$ and not an explicit
formula.

In the present paper we identify the critical parts of
the summation over $t_1,t_2,f$ in \eqref{68}, i.e. those parts
where estimating in absolute value as in \eqref{69} we
do not get a better upper bound than $\frac {d^{5/2}}{\sqrt {X}}$. In these
critical parts we use the explicit formula proved for
$h\left(t_1^2-4,t_2^2-4,f\right)$ in \cite{biroClass}, and we prove
assuming Conjecture \ref{conj:1} that there is a cancellation in the summation over $
t_1,t_2,f$.

Very roughly speaking, the critical part consists of
sums of the following type: $a\le t_1,t_2\le 2a$ with an $a$ satisfying
\[a,\sqrt {X}-2a\gg X^{\frac 12-\epsilon},\]
and $f$ runs over an interval of length around $d$ whose
endpoints have order of magnitude $X$. We will see that
$\left(d/X\right)^{3/2}$ is an upper bound for $F_{X,d,j_1,j_2}\left
(t_1,t_2,f\right)$. The formula for $h\left(t_1^2-4,t_2^2-4,f\right
)$ will be roughly
\[\sum_{D|f^2-\left(t_1^2-4\right)\left(t_2^2-4\right)}\left(\frac {
t_1^2-4}D\right)\]
(this is not exactly true, but we will have a similar
expression). Estimating everything trivially we get the bound $\frac {d^{5/2}}{\sqrt {X}}$. Instead of this, taking complementary divisors we can
assume $D\ll X$, and for every fixed $D$ we apply
Poisson summation in $f$. The zeroth term of the Poisson
summation will be small using the summation over $j_1,j_2$,
the nonzero terms will be small for any $j_1,j_2$ assuming
Conjecture \ref{conj:1}.

\section{Reduction to the estimation of a square integral}

We recall here an estimation proved in \cite[Section
5]{Biro}.

If $t>2$ is an integer and $m$ is a compactly supported bounded function on
$[0,\infty )$, for $z\in\HH$ let
\[M_{t,m}(z)=\sum_{\gamma\in\Gamma_t}m\left(z,\gamma z\right),\]
where
\[\Gamma_t:=\left\{\begin{pmatrix}a&b\\ c&d\end{pmatrix}\in SL_2(\mathbb{
Z}),a+d=t\right\}\]
and for $z,w\in\HH$ we write
\[m(z,w)=m\left(u(z,w)\right)\]
by an abuse of notation, see \eqref{1.1} for $u(z,w)$.

For $x>0$ introduce the notation
\[k_x\left(y\right)=1\mbox{\rm \ for $0\le y\le x$},\quad k_x\left
(y\right)=0\mbox{\rm \ for $y>x$}.\]
It is proved in \cite[(5.16), (5.17)]{Biro} that if $\epsilon >0$,
$d\ge 100$ and $100Jd\le X$ with a positive integer $J\ge 2$, then
\begin{equation}\int_{\Omega}\left(N\left(z,X\right)-3X\right)^2d
\mu_z\ll_{\Omega ,\epsilon ,J}\frac {X^2}d+X^{1+\epsilon}+\int_1^
2I_{d,X,J}\left(\tau\right)d\tau ,\label{2.0}\end{equation}
where $ $
\[I_{d,X,J}\left(\tau\right):=\int_{\mathcal F}\left(\sum_{t>2}\sum_{
j=0}^J\left(-1\right)^j\left(\begin{array}{c}
J\\
j\end{array}
\right)M_{t,k_{\left(X-jd\tau -2\right)/4}}\left(z\right)\right)^
2d\mu_z.\]
It is proved in \cite[(5.18) and Section 6]{Biro} that if $\epsilon
>0$ is given and the
integer $J\ge 2$ is fixed to be large enough in terms of $\epsilon$,
then we have
\begin{equation}_{}I_{d,X,J}\left(\tau\right)\ll_{\epsilon}X^{\epsilon}\frac {
d^{5/2}}{\sqrt {X}}\label{2.1}\end{equation}
uniformly for $1\le\tau\le 2$ and $X^{2/3}\le d\le X^{99/100}$. Then we
choose $d=X^{5/7}$ there to get Theorem \ref{thm:1}.

In this paper we will improve the estimate \eqref{2.1}
assuming Conjecture \ref{conj:1}. We will show assuming
that conjecture that there is a $\beta >0$ such that
\begin{equation}I_{d,X,J}\left(\tau\right)\ll\frac {d^{5/2}}{\sqrt {
X}}X^{-\beta}\label{2.2}\end{equation}
holds uniformly for $1\le\tau\le 2$ and
\begin{equation}X^{\frac 23+\frac 1{100}}\le d\le X^{99/100}\label{201}\end{equation}
if $J$ is large enough. Then \eqref{2.2} and \eqref{2.0} imply
Theorem \ref{thm:2} by choosing $d$ to be a bit larger
power of $X$ than $ $$X^{5/7}$.

So it is enough to show the estimate \eqref{2.2}.

\section{Proof of the improved estimate assuming Conjecture \ref{conj:1}}

\subsection{Improvement of a lemma of \cite{Biro}.}

The following lemma is an improvement of \cite[(3.20)]{Biro}. We first recall a notation from \cite{Biro}. For any finite set of integers $n_1, n_2\ldots ,n_r$ we write
\[S\left(n_1,n_2,\ldots ,n_r\right)=\max\left\{k\ge 1:\;k^2|{\rm g}
{\rm c}{\rm d}\left(n_1,n_2,\ldots ,n_r\right)\right\}.\]
\begin{lemma}\label{lem:1}Let $3\le a<b\le c\le 2a$ be integers.
For any $\epsilon >0$ we have that
\begin{equation}\sum_{t_1=a}^{c-1}\sum_{a\le t_2\le c-1,0<\left|t_
2-t_1\right|\le b-a}S\left(t_1^2-4,t_2^2-4\right)\label{1}\end{equation}
is
\[\ll_{\epsilon}a^{\epsilon}\left(\left(c-a\right)\left(b-a\right
)+\sqrt a\left(c-a\right)\right).\]
\end{lemma}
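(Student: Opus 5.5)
We bound the sum by grouping the terms according to the value of $k=S(t_1^2-4,t_2^2-4)$. If $k^2$ divides both $t_1^2-4$ and $t_2^2-4$, then $k^2\mid t_1^2-t_2^2=(t_1-t_2)(t_1+t_2)$. Also $k^2\le t_1^2-4<4c^2\le 16a^2$, so $k\le 4a$. Hence \eqref{1} is at most
\[\sum_{k\le 4a} k\cdot \#\left\{(t_1,t_2):\ a\le t_1,t_2<c,\ 0<|t_2-t_1|\le b-a,\ k^2\mid t_1^2-4,\ k^2\mid t_2^2-4\right\}.\]
We split the $k$-range into the two regimes that produce the two terms $(c-a)(b-a)$ and $\sqrt a(c-a)$.

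\emph{Counting for fixed $k$.} The congruence $x^2\equiv 4 \pmod{k^2}$ has $\ll_\epsilon k^{\epsilon}$ solutions modulo $k^2$, since the number of square roots is bounded by a power of $2$ times $\tau(k)$. So the number of admissible $t_1$ in an interval of length $c-a$ is $\ll a^\epsilon\left((c-a)/k^2+1\right)$. For fixed $t_1$, the admissible $t_2$ lie in a set of $\ll a^\epsilon$ residue classes modulo $k^2$ inside an interval of length $\le 2(b-a)$. That gives $\ll a^\epsilon\left((b-a)/k^2+1\right)$ choices. The resulting bound for fixed $k$ is
\[k\,a^{2\epsilon}\left(\frac{c-a}{k^2}+1\right)\left(\frac{b-a}{k^2}+1\right).\]
Summed over all $k$ up to $4a$, this is too weak, because the term $k\cdot 1\cdot 1$ contributes about $a^2$. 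So large $k$ need a better count.

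\emph{Small $k$: $k\le K_0$.} Here we use the crude count above. Since $b-a\le c-a$, it is $\ll a^{\epsilon}\left(k^{-3}(c-a)(b-a)+k^{-1}(c-a)+k\right)$. Summing over $k\le K_0$ gives
\[a^{\epsilon}\left((c-a)(b-a)+(c-a)\log a+K_0^2\right).\]
When $c-a\ge \sqrt a$ we take $K_0=(a(c-a)^2)^{1/4}$, so that $K_0^2=\sqrt a\,(c-a)$. When $c-a<\sqrt a$ we take $K_0\asymp a^{1/4}(c-a)^{1/2}$, which also gives $K_0^2\le \sqrt a(c-a)$. Refining: for $t_1$ fixed we should count $t_2$ only when a second admissible residue class actually exists, which it cannot if $k^2>2(b-a)$ unless $t_2\equiv \pm t_1$ structure is used.

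\emph{Large $k$ (main obstacle).} Here we exploit the factorization $k^2\mid (t_1-t_2)(t_1+t_2)$ with $0<|t_1-t_2|\le b-a$ and $t_1+t_2\le 4a$. Write $k^2=gh$ with $g\mid t_1-t_2$ and $h\mid t_1+t_2$, so that $gh\le 4a(b-a)$. Set $\delta=t_1-t_2$ and $\sigma=t_1+t_2$. Then $t_1=(\sigma+\delta)/2$, and the condition $k^2\mid t_1^2-4$ is one more congruence on $t_1$. Parametrize by $\delta$, which has $\le 2(b-a)$ values, and by $\sigma$ in an interval of length $\le 2(c-a)$. The requirement that $h$ divide $\sigma$, together with $k^2\mid t_1^2-4$, should force $\sigma$ into $\ll a^{\epsilon}$ classes modulo $\operatorname{lcm}(h,k^2/\gcd)$. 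I expect this to give, for each fixed $k>K_0$, a count of $\ll a^\epsilon\left((b-a)(c-a)/k^2+\ldots\right)$ with the weight $k$ compensated. The alternative is to use $k\le\sqrt{t_1^2-4}$ together with the count of $t_1<c$ such that $k^2\mid t_1^2-4$, summed dyadically in $k$. This gives $\sum_{t_1}\sum_{k^2\mid t_1^2-4}k$ restricted to $k>K_0$, and for each $t_1$ only $a^\epsilon$ pairs. The delicate point, which I expect to be the main difficulty, is to show that large $k\ge\sqrt a$ arise for only $O(a^{\epsilon})$ pairs per unit length of $t_1$. This should use that $t_1^2-4$ with $k^2\mid t_1^2-4$ and $k>\sqrt{a}$ has $t_1$ determined modulo $k^2>a\ge c-a$ up to $a^\epsilon$ choices, giving the term $\sqrt a(c-a)$ after pairing with $t_2$ via the factorization.
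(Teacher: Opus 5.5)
Your treatment of $k\le K_0$ is fine, but the range $K_0<k\ll a$ is never actually handled. You write down the factorization $k^2\mid(t_1-t_2)(t_1+t_2)$ and then only say what you expect it to give. That range is the whole difficulty, as you note yourself: the per-$k$ count summed over large $k$ is of size $a^2$. Your fallback does not close it either. As stated, a bound of $O(a^{\epsilon})$ pairs per unit length of $t_1$ ignores the weight $k$, which is a priori only bounded by $\sqrt{4a(b-a)}$, so it cannot produce $\sqrt a\,(c-a)$.

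The missing idea is a structural property of the splitting $k^2=E_1E_2$, where $E_1:=\gcd(k^2,t_1-t_2)$ and $E_2:=k^2/E_1$, so that $E_2\mid t_1+t_2$. The point is that $E_1$ and $E_2$ have no common odd prime factor: such a $p$ would divide $2t_1$ and $t_1^2-4$, hence $4$. Since $E_1E_2$ is a square, this forces $E_1=uk_1^2$ with $u\in\{1,2\}$.

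With this fact no split into small and large $k$ is needed. From $E_1\le|t_1-t_2|\le b-a$ and $E_2\le t_1+t_2<4a$ you get $k_1\le\sqrt{b-a}$ and $k=\sqrt u\,k_1\sqrt{E_2}\ll k_1\sqrt a$. For fixed $(t_1,E_1,E_2)$, the conditions $t_2\equiv t_1\ (\mathrm{mod}\ E_1)$, $t_2\equiv -t_1\ (\mathrm{mod}\ E_2)$ and $|t_2-t_1|\le b-a$ leave at most $1+2(b-a)/\max(E_1,E_2)\le 1+2(b-a)/k$ values of $t_2$. So each triple contributes $\ll k_1\sqrt a+(b-a)$. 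Because $E_1E_2\mid t_1^2-4$, there are only $\ll a^{\epsilon}$ pairs $(E_1,E_2)$ for each $t_1$. The term $b-a$ then gives $a^{\epsilon}(c-a)(b-a)$, and the other term is
\[\ll a^{\epsilon}\sqrt a\sum_{k_1\le\sqrt{b-a}}k_1\,\#\{a\le t_1<c:\ k_1^2\mid t_1^2-4\}\ll a^{2\epsilon}\sqrt a\sum_{k_1\le\sqrt{b-a}}\left(\frac{c-a}{k_1}+k_1\right)\ll a^{3\epsilon}\sqrt a\,(c-a).\]
This is the paper's argument; the paper first reduces to $b=c$, which is not essential.

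The square shape of $E_1$ is what makes this work. The weight $\sqrt{E_1}$ is paid for by the density $1/E_1$ of those $t_1$ with $E_1\mid t_1^2-4$. Summing $E_1^{-1/2}$ over squares $E_1\le b-a$ costs only a logarithm, whereas summing over all $E_1\le b-a$ would lose a factor $\sqrt{b-a}$.
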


\begin{proof}As in the proof of \cite[(Lemma 3.5)]{Biro}, we may assume $b=c$, since the general case
follows from this special case by dividing the summation
over $t_1$ into $O\left(\frac {c-a}{b-a}\right)$ subsums.

Let us write $k:=S\left(t_1^2-4,t_2^2-4\right)$ and $E:=k^2$. Then $
E$
divides both $t_1^2-4$ and $t_2^2-4$, hence $E|t_1^2-t_2^2$. Let
$E_1:=\gcd\left(E,t_1-t_2\right)$ and $E_2:=E/E_1$. Then $E_2|t_1
+t_2$. If $p\neq 2$ is a
common prime divisor of $E_1$ and $E_2$, then $p$ divides $t_1$
and also $k$. Hence $p|t_1^2-4$, but this would give $p|4$, a
contradiction. So only $2$ might be a common prime divisor
of $E_1$ and $E_2$. Since $E_1E_2$ is a square, this implies that
$E_1=uk_1^2$, where $u=1$ or $u=2$, and $k_1>0$ is an intger.

Note also that since $E_1|t_1-t_2$ and $E_2|t_1+t_2$, for given $
t_1$,
$E_1$ and $E_2$ the number of possible integers $t_2$ is
\[\le 1+\frac {b-a}{\max\left(E_1,E_2\right)}\le 1+\frac {b-a}{\sqrt {
E_1}\sqrt {E_2}}.\]
Then \eqref{1} is $\ll$ than
\begin{equation}\sum_{t_1=a}^{b-1}\sum_{k_1^2E_2|t_1^2-4,k_1^2\ll
b-a,E_2\ll a}k_1\sqrt {E_2}\left(1+\frac {b-a}{k_1\sqrt {E_2}}\right
),\label{2}\end{equation}
where the inner summation is over integers $k_1$ and $E_2$.

The contribution to \eqref{2} of the second term in the bracket is
clearly $\ll_{\epsilon}a^{\epsilon}\left(b-a\right)^2$ for any $\epsilon
>0$. The contribution of the
first term in the bracket is
\[\ll_{\epsilon}a^{\epsilon}\sqrt {a}\sum_{k_1^2\ll b-a}k_1\sum_{
a\le t_1<b,k_1^2|t_1^2-4}1.\]
For every given $k_1$ the inner sum is clearly $\ll_{\epsilon}a^{
\epsilon}\frac {b-a}{k_1^2}$,
because the number of solutions of the congruence $t_1^2\equiv 4$
modulo $k_1^2$ is $\ll_{\epsilon}a^{\epsilon}$. The lemma is proved.

\end{proof}

\subsection{Some notation and preliminaries.}

Throughout the paper we denote by $X$ and $d$ large enough
positive numbers such that $X^{2/3}\le d\le X^{99/100}$.

First we recall those notation from \cite{Biro} which will be used frequently in te sequel.

We denote by $X$ a large enough real number. We take integers $1\le
I\ll\log X$ and
$$3=a_1<a_2<\ldots <a_I<1+\sqrt {X+2}\le a_{I+1}<2+\sqrt {X+2}$$
such that
\[a_{i+1}\le\frac 32a_i,\;3+\sqrt {X+2}-a_i\le 2\left(3+\sqrt {X+
2}-a_{i+1}\right)
\]
for $1\le i\le I$.
We write
\begin{equation}a_{j_1,j_2}:=\left(-1\right)^{j_1+j_2}\left(\begin{array}{c}
J\\
j_1\end{array}
\right)\left(\begin{array}{c}
J\\
j_2\end{array}
\right).\label{888}\end{equation}

For $S,T>0$ we write
$$A\left(S,T\right):=\sqrt {\left(1+S^2\right)\left(1+T^2\right
)}-ST,$$
\[B\left(S,T\right):=\sqrt {\left(1+S^2\right)\left(1+T^2\right)}
+ST.\]
For a given $1\le\tau\le 2$ and for integers $t_1>2$, $t_2>2$ and $
f$ we write, assuming that we have nonnegative numbers under the square root,
\[S_0=S_0\left(j_1,t_1\right)=\sqrt {\frac {X-j_1d\tau -2}{t_1^2-
4}-1},\]
\[T_0=T_0\left(j_2,t_2\right)=\sqrt {\frac {X-j_2d\tau -2}{t_2^2-
4}-1}.\]
We introduce the convention that when we write simply $S_0$, then we mean
$S_0\left(j_1,t_1\right)$, and if we write simply $T_0$, then we mean $
T_0\left(j_2,t_2\right)$.

It will be important that in view of the identity
\[A\left(S_0,T_0\right)-1=\frac {\left(S_0-T_0\right)^2}{\sqrt {\left
(1+S_0^2\right)\left(1+T_0^2\right)}+S_0T_0+1}\]
(see the displayed formula below \cite[(6.49)]{Biro}) it is
true that for any $\epsilon >0$ there is an $\alpha >0$ such that if $
t_i,\sqrt X-t_i,\left|t_1-t_2\right|\gg X^{\frac 12-\alpha}$ for $
i=1,2$, then
\begin{equation}A\left(S_0,T_0\right)-1\gg X^{-\epsilon}.\label{741}\end{equation}
We write also
\begin{equation}F=F\left(t_1,t_2,f\right)=\left|\frac f{\sqrt {t_
1^2-4}\sqrt {t_2^2-4}}\right|.\label{742}\end{equation}
Let $s$ be a positive integer with $s\equiv 0,1$
modulo $4$ and define
$$\mathcal{Q}_s:=\left\{Q\left(X,Y\right)=AX^2+BXY+CY^2:\;A,B,C\in {\mathbb Z},\;
B^2-4AC=s\right\}.$$
If $\tau =\begin{pmatrix}a&b\\ c&d\end{pmatrix}\in SL_2({\mathbb Z})$ and $Q$ is a quadratic form, let us
define the quadratic form $Q^{\tau}$ by $Q^{\tau}\left(X,Y\right)
=Q\left(aX+bY,cX+dY\right)$. The
group $SL_2({\mathbb Z})$ acts in this way on $\mathcal{Q}_s$.

For $d_1,d_2,t\in {\mathbb Z}$, let ${\mathcal Q}_{d_1,d_2,t}$ be the subset of $\mathcal{Q}_{d_1}\times\mathcal{Q}_{d_2}$ consisting of those pairs $\left
(Q_1,Q_2\right)$ of quadratic forms having codiscriminant $t$. In other words, writing
$$Q_1\left(X,Y\right)=A_1X^2+B_1XY+C_1Y^2,\;Q_2\left(X,Y\right)=A_
2X^2+B_2XY+C_2Y^2$$
we require that the discriminant of $Q_j$ is $d_j$ ($j=1,2$) and that
$$B_1B_2-2A_1C_2-2A_2C_1=t.$$
It is easy to check that if $\tau\in SL_2({\mathbb Z})$, and
$\left(Q_1,Q_2\right)\in {\mathcal Q}_{d_1,d_2,t}$, then $\left(Q_1^{\tau},Q_
2^{\tau}\right)\in {\mathcal Q}_{d_1,d_2,t}$. Hence $SL_2({\mathbb Z})$
acts on $\mathcal Q_{d_1,d_2,t}$. We denote by $h\left(d_1,d_2,t\right
)$ the number of $SL_2(\mathbb Z)$-equivalence classes of $\mathcal
Q_{d_1,d_2,t}$.

We will frequently (and sometimes tacitly) use the
upper bound given for these class numbers proved in
\cite[Lemma 3.1]{Biro}.

We introduce also some new notation.

Let $\psi_0$ be a smooth function on the real axis
such that $\psi_0\left(y\right)=1$ for $y\ge 1$ and $\psi_0\left(
y\right)=0$ for $y\le 1/2$.

If $\beta_1>0$, let $f_{\beta_1}\left(y\right)$ be a real-valued smooth function defined for real $
y$ with the following properties. We have
\[f_{\beta_1}\left(y\right)=0\:\]
for $y\notin\left[X^{-\beta_1},1-X^{-\beta_1}\right]$,
\[f_{\beta_1}\left(y\right)=1\:\]
for $y\in\left[2X^{-\beta_1},1-2X^{-\beta_1}\right]$, and
\[f_{\beta_1}^{\left(r\right)}\left(y\right)\ll_rX^{\beta_1r}\]
for every $y>0$ and every integer $r\ge 0$.

If $c$ is an odd integer, let $\epsilon_c=1$ if $c\equiv 1$ modulo $
4$, and $\epsilon_c=i$ if $c\equiv -1$ modulo $4$.

For the definition of the Hilbert symbol $\left(a,b\right)_p$ and the
quantities $\nu_p\left(x\right)$ and $h_p\left(d_1,d_2,t\right)$ see the Introduction of \cite{biroClass}.

\subsection{Identifying the critical parts of the summation over $
t_1,t_2,f$.}

Recall the notation $y_1$, $y_2$ and $\Phi(y)$ from \cite[Lemma 4.3]{Biro}.

\begin{lemma}\label{lem:2} Let $\epsilon >0$ be given and let
\begin{equation}X^{2/3}\le d\le X^{99/100}.\label{6,2}\end{equation}
There is a $0<\delta_0<\frac 1{100}$ which is fixed in terms of $
\epsilon$ such that for any
$\delta >0$ which is small enough in terms of $\epsilon$ and for any
positive integer $J$  which is large enough in terms of $\delta$
and $\epsilon$ we have for every $1\le\tau\le 2$ that $I_{d,X,J}\left
(\tau\right)$ is $\ll_{\epsilon ,\delta ,J}$ than the sum of
\begin{equation}\sum_{i=1}^{I_0}\sum_{j_1=0}^J\sum_{j_2=0}^Ja_{j_
1,j_2}\sum_{\left(t_1,t_2,f\right)\in U_{i,j_1,j_2}}h\left(t_1^2-
4,t_2^2-4,f\right)S_0\Phi\left(y_1\left(S_0,T_0,F\right)\right),\label{6,3}\end{equation}
\begin{equation}\sum_{i=1}^{I_0}\sum_{j_1=0}^J\sum_{j_2=0}^Ja_{j_
1,j_2}\sum_{\left(t_1,t_2,f\right)\in V_{i,j_1,j_2}}h\left(t_1^2-
4,t_2^2-4,f\right)S_0\Phi\left(y_2\left(S_0,T_0,F\right)\right),\label{6,4}\end{equation}
\begin{equation}-\sum_{i=1}^{I_0}\sum_{j_1=0}^J\sum_{j_2=0}^Ja_{j_
1,j_2}\sum_{\left(t_1,t_2,f\right)\in W_{i,j_1,j_2}}h\left(t_1^2-
4,t_2^2-4,f\right)S_0\Phi\left(y_2\left(S_0,T_0,F\right)\right)\label{6,5}\end{equation}
and
\begin{equation}X^{\epsilon}\left(\frac {d^3}X+\frac {d^2}{X^{1/4}}
+\sqrt {X}d\right),\label{7}\end{equation}
where $I_0$ is the maximal integer $i$ satisfying $1\le i\le I$
and
\begin{equation}\sqrt {X+2}-a_i>\frac d{\sqrt {X}}X^{\delta_0},\label{10}\end{equation}
$U_{i,j_1,j_2}$ is the set of integer triples $\left(t_1,t_2,f\right
)$ satisfying the following
conditions:
\begin{equation}B\left(S_0\left(j_1,t_1\right),T_0\left(0,t_2\right
)\right)-\frac d{a_i^2}X^{\delta}<F\le B\left(S_0\left(j_1,t_1\right
),T_0\left(j_2,t_2\right)\right),\label{7,9}\end{equation}
\begin{equation}F>1,\label{8}\end{equation}
\begin{equation}a_i\le t_1,t_2\le a_{i+1}-1,\label{9}\end{equation}
$V_{i,j_1,j_2}$ is the set of integer triples $\left(t_1,t_2,f\right
)$ satisfying
\eqref{8}, \eqref{9} and the following conditions:
\begin{equation}A\left(S_0\left(j_1,t_1\right),T_0\left(0,t_2\right
)\right)-\frac {d\left|t_2-t_1\right|}{a_i\left(X-a_i^2\right)}X^{
\delta}<F\le A\left(S_0\left(j_1,t_1\right),T_0\left(j_2,t_2\right
)\right),\label{11}\end{equation}
\begin{equation}t_1-t_2>\frac {da_i}XX^{\delta_0},\label{11,1}\end{equation}
$W_{i,j_1,j_2}$ is the set of integer triples $\left(t_1,t_2,f\right
)$ satisfying
\eqref{8}, \eqref{9}, \eqref{11} and the following condition:
\begin{equation}t_2-t_1>\frac {da_i}XX^{\delta_0}.\label{11,2}\end{equation}
\end{lemma}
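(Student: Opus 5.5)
This lemma is a careful re-run of the argument of \cite[Sections 5--6]{Biro}, keeping track of which pieces were only bounded by $X^{\epsilon}d^{5/2}/\sqrt X$ and isolating them. I would start from the expression for $I_{d,X,J}(\tau)$ obtained in \cite{Biro} by unfolding the square integral. There $I_{d,X,J}(\tau)$ is written as a sum over $j_1,j_2$ of $a_{j_1,j_2}$ times a sum over $t_1,t_2>2$ and $f$. The summand is $h(t_1^2-4,t_2^2-4,f)$ times an analytic factor. Via \cite[Lemma 4.3]{Biro}, this factor is expressed in terms of $S_0,T_0,F$ and the functions $\Phi(y_1(S_0,T_0,F))$, $\Phi(y_2(S_0,T_0,F))$. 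The $y_1$-branch is supported near the boundary $F\approx B(S_0,T_0)$, and the $y_2$-branch near $F\approx A(S_0,T_0)$ (with $F>1$). The terms with $F\le 1$ and the elliptic/parabolic contributions were already shown in \cite{Biro} to be $\ll X^{\epsilon}\sqrt X d$ or smaller, so they go into \eqref{7}.

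Next I would decompose dyadically, $t_1\in[a_i,a_{i+1})$. When $t_1,t_2$ lie in different or far-apart ranges, the contribution is handled as in \cite{Biro} and gives terms absorbed into \eqref{7}. The same holds for $i>I_0$, i.e.\ $\sqrt{X+2}-a_i\le \frac{d}{\sqrt X}X^{\delta_0}$. There the number of $t$'s is only $\ll \frac{d}{a_i}X^{\delta_0}$, and the bounds of \cite{Biro} give $X^{\epsilon}d^3/X$ after choosing $\delta_0$ small in terms of $\epsilon$. For fixed $i\le I_0$, $j_1,j_2$, I use the alternating sum over $j_2$. The factor as a function of $j_2$ depends on $T_0(j_2,t_2)$, and where $F$ is away from the window between $B(S_0,T_0(0,t_2))$ and $B(S_0,T_0(j_2,t_2))$, it can be Taylor expanded in $j_2d\tau$. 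With $J$ large in terms of $\delta$, the $J$-th difference kills all terms up to an error $X^{-100}$, except in a window of width $\frac{d}{a_i^2}X^{\delta}$ below $B(S_0,T_0(j_2,t_2))$. This width is the scale at which $B$ moves as $j_2$ varies, since $\partial B/\partial(X)\approx 1/a_i^2$. This produces exactly the sets $U_{i,j_1,j_2}$ in \eqref{7,9}. The same procedure near $A$ produces the window in \eqref{11}. Its width $\frac{d|t_2-t_1|}{a_i(X-a_i^2)}X^{\delta}$ reflects that $A$ depends on $j_2$ only through $(S_0-T_0)^2$-type differences, by the identity preceding \eqref{741}.

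Finally, within the $A$-windows I split according to $|t_1-t_2|$. The near-diagonal range $|t_1-t_2|\le \frac{da_i}{X}X^{\delta_0}$ is bounded in absolute value. I use the class number bound \cite[Lemma 3.1]{Biro}, which is in terms of $S(t_1^2-4,t_2^2-4)$ and divisor functions, summed with Lemma \ref{lem:1} with $b-a\asymp \frac{da_i}{X}X^{\delta_0}$. The gain from Lemma \ref{lem:1} over \cite[(3.20)]{Biro}, the $\sqrt a(c-a)$ term, is what should bring this piece down to $X^{\epsilon}(d^3/X+d^2X^{-1/4})$. The remaining ranges $t_1-t_2>\frac{da_i}XX^{\delta_0}$ and $t_2-t_1>\frac{da_i}XX^{\delta_0}$ give \eqref{6,4} and \eqref{6,5}; the sign in \eqref{6,5} comes from the orientation in \cite[Lemma 4.3]{Biro}.

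The main obstacle is the bookkeeping that shows every discarded piece fits into \eqref{7}, especially the near-diagonal estimate. There the widths of the $f$-interval, the count of $t_2$ near $t_1$, and Lemma \ref{lem:1} must combine exactly. I would first check the worst case $a_i\asymp\sqrt X$ with $\sqrt X-a_i\asymp \frac{d}{\sqrt X}X^{\delta_0}$ to fix $\delta_0$.
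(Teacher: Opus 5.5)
Your skeleton matches the paper's. You re-run \cite[Section 6]{Biro}, obtain the windows \eqref{7,9} and \eqref{11} from the $J$-th difference as in \cite[Subsections 6.7, 6.8]{Biro}, and use Lemma~\ref{lem:1} instead of \cite[(3.20)]{Biro} to push the near-diagonal part $|t_1-t_2|\ll \frac{da_i}{X}X^{\delta}$ of the $A$-window into \eqref{7}. The gap is in what you take as already done in \cite{Biro}. You claim that the terms outside $F>1$, and the near-edge range $\sqrt{X+2}-a_i\ll \frac{d}{\sqrt X}X^{\delta}$, are bounded in \cite{Biro} by $X^{\epsilon}\sqrt X d$ and $X^{\epsilon}d^3/X$. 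In \cite{Biro} this is true for \cite[(6.5)]{Biro} (Subsection 6.4 there), and for \cite[(6.5)]{Biro} and \cite[(6.7)]{Biro} near the edge (Subsection 6.2 there). It is not true for the piece \cite[(6.6)]{Biro}, which lies outside the range $F>1$: the paper can impose \eqref{8} only after disposing of \cite[(6.6)]{Biro}. In \cite{Biro} that piece was estimated with \cite[(3.20)]{Biro}, which does not bring it below the critical size $X^{\epsilon}d^{5/2}/\sqrt X$. The right-hand side of the lemma has no slot for it except \eqref{7}, so as written your argument leaves a term of critical size unaccounted for.

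What is missing is Lemma~\ref{lem:1} in two further places. First, near the edge, in the reasoning below \cite[(6.22)]{Biro}, the $t_1\neq t_2$ part becomes $\ll X^{4\delta}d\cdot\frac{d}{\sqrt X}(\frac{d}{\sqrt X}X^{\delta}+\sqrt{a_i})$, i.e.\ $\ll X^{\epsilon}(d^3/X+d^2X^{-1/4})$. So this range also produces the middle term of \eqref{7}, not only $d^3/X$. Second, in \cite[Subsection 6.5]{Biro}, below \cite[(6.39)]{Biro}, the part of \cite[(6.6)]{Biro} under \cite[(6.36), (6.37)]{Biro} with $t_1\neq t_2$ becomes $\ll X^{4\delta}(\frac{d}{a_i}+\frac{d^2}{X-a_i^2})\min(a_i,\sqrt X-a_i)(\frac{da_i}{X}+\sqrt{a_i})$. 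By $d\ge X^{2/3}$ this is bounded by the first two terms of \eqref{7}; for instance, at $a_i\asymp\sqrt X-a_i\asymp\sqrt X$ it equals $d^3/X+d^2X^{-1/4}$. The decisive ingredient is exactly the $\sqrt a(c-a)$ term of Lemma~\ref{lem:1}: a factor $\sqrt{a_i\cdot da_i/X}\asymp\sqrt d$ in place of $\sqrt{a_i}$ would give precisely $d^{5/2}/\sqrt X$. The rest of \cite[Subsection 6.5]{Biro} then goes through unchanged. Only after this may you restrict to $F>1$ and proceed as you describe. (Also, $I_{d,X,J}(\tau)$ contains only $t>2$, so there are no elliptic or parabolic terms to dispose of at this stage.)
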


\begin{remark}Note that $U_{i,j_1,j_2}$, $V_{i,j_1,j_2}$ and $W_{
i,j_1,j_2}$
depend also on $\delta_0$ and $\delta$, but we do not denote it. Note
also that the middle term in \eqref{7} could be omitted as
it is the geometric mean of the other two terms,
but we keep it since it will be convenient in the course
of the proof.
\end{remark}

\begin{proof}This lemma can be proved by the reasoning
of \cite[Section 6]{Biro}, using at a
few places Lemma \ref{lem:1} above instead of
\cite[(3.20)]{Biro}.

In more detail, our reasoning is the following. First we see by
\cite[(6.3)-(6.7)]{Biro} that it is enough to
estimate \cite[(6.5), (6.6) and (6.7)]{Biro}.

Then, as in \cite[Subsection 6.2]{Biro}, we consider the contribution to
\cite[(6.5), (6.6) and (6.7)]{Biro} of those $i$ for which we have
\begin{equation}\sqrt {X+2}-a_i=O\left(\frac d{\sqrt {X}}X^{\delta}\right
)\label{118}\end{equation}
for some $\delta >0$ which is chosen small enough in terms of
$\epsilon$. We prove that this contribution can be estimated by
\eqref{7}. In the case of \cite[(6.7) and (6.5)]{Biro} it is proved in
\cite[Subsection 6.2]{Biro}. In the case of \cite[(6.6)]{Biro}
we apply Lemma \ref{lem:1} above instead of
\cite[(3.20)]{Biro} in the reasoning below \cite[(6.22)]{Biro}.
We get in this way, using also the condition \eqref{118}
above that the $t_1\neq t_2$ part of
\cite[(6.22)]{Biro} is
\[\ll_{\delta}X^{3\delta}d\frac d{\sqrt {X}}X^{\delta}\left(\frac
d{\sqrt {X}}X^{\delta}+\sqrt {a_i}\right),\]
which is smaller than \eqref{7}, if $\delta$ is small
enough. Hence we can estimate by \eqref{7} also the contribution to \cite[(6.6)]{Biro} of those $
i$ for which
we have \eqref{118}. Hence we can assume from now on in \cite[(6.5), (6.6) and (6.7)]{Biro} that \eqref{10} is true with some $
\delta_0>0$ which is fixed in terms of $\epsilon$.

It is then proved in \cite[Subsection 6.4]{Biro} that
\cite[(6.5)]{Biro} can be estimated by $X^{\epsilon}\sqrt {X}d$. So it is
enough to consider \cite[(6.6) and (6.7)]{Biro}.

Next, as in \cite[Subsection 6.5]{Biro}, we consider the
contribution of \cite[(6.6)]{Biro}. In the reasoning below
\cite[(6.39)]{Biro} we apply again Lemma \ref{lem:1} above instead of
\cite[(3.20)]{Biro}. We get in this way that that part of
\cite[(6.6)]{Biro} where \cite[(6.36) and (6.37)]{Biro} and
also $t_1\neq t_2$ hold is
\[\ll_{\delta}X^{4\delta}\left(\frac d{a_i}+\frac {d^2}{X-a_i^2}\right
)\left(\left(\min\left(a_i,\sqrt {X}-a_i\right)\right)\left(\frac {
da_i}X+\sqrt {a_i}\right)\right)\]
for any $\delta >0$, and this can be easily estimated by the
first two terms of \eqref{7} if $\delta$ is small enough. Using
the rest of the reasoning of \cite[Subsection 6.5]{Biro}
without any change we see that \cite[(6.6)]{Biro} can be
estimated by \eqref{7}. So it is enough to consider
\cite[(6.7)]{Biro}. In particular, we can assume from now
on that \eqref{8} is true.

We now use the new expression \cite[(6.45)]{Biro} for \cite[(6.7)]{Biro}
given in \cite[Subsection 6.6]{Biro}.

We first consider the contribution of $B_{t_1,t_2}$ and $C_{t_1,t_
2}$
(using the notations introduced in \cite[Subsection
6.6]{Biro}) to \cite[(6.7)]{Biro}. Note that in the definition
of  $B_{t_1,t_2}$ and $C_{t_1,t_2}$ (see \cite[(6.47) and (6.48)]{Biro}) we
have
\[1<F\le A\left(S_0,T_0\right).\]
Estimating the second displayed formula below \cite[(6.49)]{Biro} by Lemma \ref{lem:1} above instead of
\cite[(3.20)]{Biro} we get that the contribution to
\cite[(6.7)]{Biro} of the terms $B_{t_1,t_2}$, $C_{t_1,t_2}$ satisfying
\[\left|t_2-t_1\right|\ll\frac {da_i}XX^{\delta}\]
for some $\delta >0$ is
\[\ll_{\delta}X^{5\delta}\left(\frac {\sqrt {X-a^2_i}}{a_i}+\frac {
d^2a_i/X}{\sqrt {X-a^2_i}}\right)P\]
with
\[P:=\left(a_i\sqrt {\sqrt {X}-a_i}+\left(\sqrt {X}-a_i\right)\left
(a_i\frac dX+\sqrt {a_i}\right)\right).\]
One can see using $d\ge X^{2/3}$ that this can be estimated by the
first two terms of \eqref{7} if $\delta$ is small enough. Hence
when we consider the contribution of $B_{t_1,t_2}$ and $C_{t_1,t_
2}$ to
\cite[(6.7)]{Biro} we can assume from now on that
\begin{equation}\left|t_2-t_1\right|>\frac {da_i}XX^{\delta_0}\label{13}\end{equation}
with some $\delta_0>0$ which is fixed in terms of $\epsilon$. Note also
that it is proved in the beginning of \cite[Subsection
6.4]{Biro} that if \eqref{10} and \eqref{13} are true, then the sign of
$S_0\left(j_1,t_1\right)-T_0\left(j_2,t_2\right)$ is the same for every pair
$0\le j_1,j_2\le J$. And for $j_1=j_2=0$ we see by the
definitions that this sign is the same as the sign of
$t_2-t_1$.

It is proved in \cite[Subsection 6.7]{Biro} that the contribution of those terms
$B_{t_1,t_2}$$,C_{t_1,t_2}$ to \cite[(6.7)]{Biro} for which
\[1<F\le A\left(S_0\left(j_1,t_1\right),T_0\left(0,t_2\right)\right
)-\frac {d\left|t_2-t_1\right|}{a_i\left(X-a_i^2\right)}X^{\delta}\]
holds for some $\delta >0$ is negligibly small. Hence we get
that the contribution of $B_{t_1,t_2}$ and $C_{t_1,t_2}$ to
\cite[(6.7)]{Biro} can be estimated by the sum of
\eqref{6,4}, \eqref{6,5} and \eqref{7}.

Finally, in \cite[Subsection 6.8]{Biro} it is proved that the contribution of those terms
$A_{t_1,t_2}$ to \cite[(6.7)]{Biro} for which
\[1<F\le B\left(S_0\left(j_1,t_1\right),T_0\left(0,t_2\right)\right
)-\frac d{a_i^2}X^{\delta}\]
holds for some $\delta >0$ is negligibly small.
Hence we get that the contribution of $A_{t_1,t_2}$ to
\cite[(6.7)]{Biro} can be estimated by the sum of
\eqref{6,3} and \eqref{7}. The lemma is proved.

\end{proof}

\subsection{Further reductions of the summation over $t_1,t_2,f$.}

\begin{lemma}\label{lem:3}Assume that \eqref{6,2} holds. For $1\le
i\le I_0$ and for
$0\le j_1,j_2\le J$ let us write $Z_{i,j_1,j_2}:=V_{i,j_1,j_2}\cup
W_{i,j_1,j_2}$ and denote by $S_{i,j_1,j_2}$ the following sum:
\[\sum_{\left(t_1,t_2,f\right)\in Z_{i,j_1,j_2}}\operatorname{sgn}
(t_1-t_2)h\left(t_1^2-4,t_2^2-4,f\right)S_0\Phi\left(y_2\left(S_0
,T_0,F\right)\right).\]
Let $\alpha >0$ and let the parameter $\delta$ used in Lemma \ref{lem:2} be small enough in terms of $\alpha$. Then there is a $\beta >0$ such that the following
statements are true.
If $\min\left(a_i,\sqrt {X}-a_i\right)<X^{\frac 12-\alpha}$, then
\begin{equation}S_{i,j_1,j_2}\ll\frac {d^{5/2}}{\sqrt {X}}X^{-\beta}
.\label{14}\end{equation}
If $\min\left(a_i,\sqrt {X}-a_i\right)\ge X^{\frac 12-\alpha}$, then
\begin{equation}\sum_{\left(t_1,t_2,f\right)\in Z_{i,j_1,j_2},\left
|t_1-t_2\right|<X^{\frac 12-\alpha}}\left|h\left(t_1^2-4,t_2^2-4,
f\right)S_0\Phi\left(y_2\left(S_0,T_0,F\right)\right)\right|\label{15}\end{equation}
is $\ll\frac {d^{5/2}}{\sqrt {X}}X^{-\beta}$, and also
\begin{equation}\sum_{\left(t_1,t_2,f\right)\in Z_{i,j_1,j_2},S\left
(t_1^2-4,t_2^2-4,f^2\right)>X^{\alpha}}\left|h\left(t_1^2-4,t_2^2
-4,f\right)S_0\Phi\left(y_2\left(S_0,T_0,F\right)\right)\right|\label{16}\end{equation}
is $\ll\frac {d^{5/2}}{\sqrt {X}}X^{-\beta}$.
\end{lemma}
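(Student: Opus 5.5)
The plan is to estimate everything trivially in absolute value, as in \cite[Subsections 6.5--6.7]{Biro}, and to check that each of the three restricted ranges loses a power of $X$ against the critical bound $d^{5/2}/\sqrt{X}$. The common starting point is the structure of $Z_{i,j_1,j_2}$ for fixed $t_1,t_2$. By \eqref{11}, $F$ lies in an interval of length $\frac{d|t_2-t_1|}{a_i(X-a_i^2)}X^{\delta}$. Multiplying by $\sqrt{t_1^2-4}\sqrt{t_2^2-4}\asymp a_i^2$, the number of admissible $f$ is
\[
\ll 1+\frac{d\,a_i\,|t_1-t_2|}{X-a_i^2}X^{\delta}.
\]
For the weights we will use $S_0\ll \sqrt{X-a_i^2}/a_i$ and the bound for $\Phi(y_2)$ from \cite[Lemma 4.3]{Biro}. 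In the critical region this should behave like $(d/X)^{3/2}$ up to factors of size $(A(S_0,T_0)-1)^{-1/2}$, and \eqref{741} controls those factors once $|t_1-t_2|$ is large. For $h$ we use the upper bound \cite[Lemma 3.1]{Biro}, which is essentially $X^{\epsilon}S(t_1^2-4,t_2^2-4,f^2)$ times a divisor-type factor.

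For \eqref{14}, where $\min(a_i,\sqrt X-a_i)<X^{\frac12-\alpha}$, I would rerun the computation of \cite[Subsection 6.7]{Biro} that produced \eqref{2.1}, replacing \cite[(3.20)]{Biro} by Lemma \ref{lem:1}. One factor in that bound is the length $\min(a_i,\sqrt X-a_i)$ of the $t_1$-range, which is of order $\sqrt X$ in the extremal case. Here it is at most $X^{\frac12-\alpha}$, so we gain $X^{-\alpha}$ up to $X^{O(\delta)}$ losses. The extremal configuration is $a_i\asymp\sqrt X-a_i\asymp\sqrt X$, and this must be checked in both regimes $a_i$ small and $a_i$ near $\sqrt X$. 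Choosing $\delta$ small in terms of $\alpha$ then gives some $\beta>0$.

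For \eqref{15}, the $t_2$-range for fixed $t_1$ now has length $X^{\frac12-\alpha}$ instead of $\sqrt X$. The number of $f$ is also proportional to $|t_1-t_2|$. So, apart from the $\Phi$ factor, the sum over pairs gains $X^{-2\alpha}$ relative to the full range. The danger is small $|t_1-t_2|$, where $A(S_0,T_0)-1$ is small and $\Phi(y_2)$ may be large. To handle this I would use the condition \eqref{11,1} or \eqref{11,2}, namely $|t_1-t_2|>\frac{da_i}{X}X^{\delta_0}$, together with the explicit form of $\Phi$ near the endpoint $F=A$, exactly as in \cite[Subsection 6.7]{Biro}. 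I would split dyadically in $|t_1-t_2|$ and sum the resulting geometric series; its top term is $X^{-\alpha}$ times the critical bound.

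For \eqref{16}, I would bound $h$ by $X^{\epsilon}k$ with $k=S(t_1^2-4,t_2^2-4,f^2)>X^{\alpha}$, and split dyadically in $k$. Writing $k^2\mid t_1^2-4,\;t_2^2-4,\;f^2$, the condition $k^2\mid f^2$ forces $f$ to lie in a fixed set of residue classes modulo roughly $k$, up to a bounded square factor. This saves about a factor $k$ in the $f$-count, provided the $f$-interval is long; it is, since for $|t_1-t_2|\ge X^{\frac12-\alpha}$ its length is about $d a_i|t_1-t_2|/X\gg X^{1/6}$. The conditions $k^2\mid t_1^2-4$ and $k^2\mid t_2^2-4$ save $k^2$ in the pair $(t_1,t_2)$ modulo $X^{\epsilon}$ solutions, as in the proof of Lemma \ref{lem:1}. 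Altogether the loss $k$ is beaten by a saving of order $k^2$ or better, giving a gain of $X^{-\alpha}$.

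The main obstacle I expect is the weight $\Phi(y_2)$ near the boundary $F\approx A(S_0,T_0)$ in \eqref{15}. There one has to track the dependence of the analytic bound on $|t_1-t_2|$ uniformly in $j_1,j_2$, reusing precisely the estimates from \cite[Subsection 6.7]{Biro}.
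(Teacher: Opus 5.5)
Your plan follows the paper's proof: you use absolute-value estimates built from the pointwise facts of \cite[Subsection 6.7]{Biro}, namely the count of $f$ and the bounds for $S_0\Phi(y_2)$ in the regimes $a_i\ge\sqrt X/2$, resp.\ $a_i\le\sqrt X/2$ with $|t_2-t_1|\ll d/a_i$ or $\gg d/a_i$, together with the class number bound, Lemma \ref{lem:1} and \cite[(3.21)]{Biro} with a dyadic split in $|t_1-t_2|$ for \eqref{14}--\eqref{15}, and the divisibilities $k\mid t_1^2-4$, $k\mid t_2^2-4$, $k\mid f$ for \eqref{16}. Only the bookkeeping needs correcting. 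First, the gain in \eqref{15}, and in \eqref{14} for $a_i$ near $\sqrt X$, is $X^{-\alpha/2}$ rather than $X^{-\alpha}$, because the $f$-count times the weight scales like $|t_1-t_2|^{-1/2}$ per pair $(t_1,t_2)$. Second, in \eqref{16} the paper first removes $|t_1-t_2|<X^{\frac12-\epsilon}$ and $\min(a_i,\sqrt X-a_i)<X^{\frac12-\epsilon}$, with $\epsilon$ much smaller than $\alpha$, so that the uniform bound \eqref{20} holds; a pointwise bound on all of $|t_1-t_2|\ge X^{\frac12-\alpha}$ would lose a factor $X^{c\alpha}$ with $c>1$ and swallow the $X^{-\alpha}$ coming from $k>X^{\alpha}$, unless you carry the $k$-savings through the dyadic sums. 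Third, saving a factor $k$ in the $f$-count needs an $f$-range of length $\gg\sqrt X\gg k$; this holds because the length is $\gg X^{\frac23-2\alpha}$, but your lower bound $X^{1/6}$ would not suffice.
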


\begin{proof}We will follow closely the arguments of \cite[Subsection
6.7]{Biro}.

The following facts are proved in \cite[Subsection
6.7]{Biro} assuming that \eqref{11,1} or \eqref{11,2} is true with some $
\delta_0\ge\delta$:

{\bf FACT 1.} The number of possible values of the integers $f$ satisfying \eqref{11} is
\begin{equation}\ll 1+\frac {d\left|t_2-t_1\right|a_i}{X-a_i^2}X^{
\delta}.\label{6.53}\end{equation}
{\bf FACT 2.} Assume that \eqref{11} holds. If $a_i\ge\frac {\sqrt {
X}}2$, then
\begin{equation}S_0\Phi\left(y_2\left(S_0,T_0,F\right)\right)\ll_{
\delta}X^{4\delta}\sqrt {\frac {d^3\left(\sqrt {X}-a_i\right)}{X^
2\left|t_2-t_1\right|^3}}.\label{6.55}\end{equation}
{\bf FACT 3.} Assume that \eqref{11} holds. If $a_i\le\frac {\sqrt {
X}}2$ and $\left|t_2-t_1\right|\ll\frac d{a_i}$, then
\begin{equation}S_0\Phi\left(y_2\left(S_0,T_0,F\right)\right)\ll_{
\delta}X^{2\delta}\sqrt {\frac d{a_i\left|t_2-t_1\right|}}.\label{6.56}\end{equation}
{\bf FACT 4.} Assume that \eqref{11} holds. If $a_i\le\frac {\sqrt {
X}}2$ and $\left|t_2-t_1\right|\gg\frac d{a_i}$, then
\begin{equation}S_0\Phi\left(y_2\left(S_0,T_0,F\right)\right)\ll_{
\delta}X^{4\delta}\sqrt {\frac {d^3}{a_i^3\left|t_2-t_1\right|^3}}
.\label{6.57}\end{equation}
If $a_i\ge\frac {\sqrt {X}}2$, then by \eqref{11,1}, \eqref{11,2} and  \eqref{6,2}
we see that the second term is larger than the first one
in \eqref{6.53}. Then by \eqref{6.53}, \eqref{6.55} and
\cite[Lemma 3.1]{Biro} we see for any $R>0$ that
\begin{equation}\sum_{\left(t_1,t_2,f\right)\in Z_{i,j_1,j_2},\left
|t_1-t_2\right|<R}\left|h\left(t_1^2-4,t_2^2-4,f\right)S_0\Phi\left
(y_2\left(S_0,T_0,F\right)\right)\right|\end{equation}
is
\[\ll_{\delta}X^{6\delta}\frac {d^{5/2}}{X^{3/4}\sqrt {X-a_i^2}}\sum_{
t_1=a_i}^{a_{i+1}-1}\sum_{a_i\le t_2<a_{i+1},\frac {da_i}XX^{\delta_
0}<\left|t_2-t_1\right|<R}\frac {S\left(t^2_1-4,t^2_2-4\right)}{\sqrt {\left
|t_2-t_1\right|}}.\]
For $R=a_{i+1}-a_i$ we have by \cite[(3.21)]{Biro} that the sum
over $t_1$,$t_2$ is $\ll_{\delta}X^{\delta}\left(\sqrt {X}-a_i\right
)X^{1/4}$, and this shows
\eqref{14} for the case $a_i\ge\frac {\sqrt {X}}2$. Taking $R=x^{\frac
12-\alpha}$ we
obtain \eqref{15} for the case $a_i\ge\frac {\sqrt {X}}2$ by a dyadic
subdivision and by Lemma \ref{lem:1} above.

If $a_i\le\frac {\sqrt {X}}2$, then by \eqref{6.53} and
\eqref{6.56} we see that
\begin{equation}\sum_{\left(t_1,t_2,f\right)\in Z_{i,j_1,j_2},\left
|t_1-t_2\right|\ll\frac d{a_i}}\left|h\left(t_1^2-4,t_2^2-4,f\right
)S_0\Phi\left(y_2\left(S_0,T_0,F\right)\right)\right|\end{equation}
is $\ll_{\delta}$ than the sum of
\begin{equation}X^{4\delta}\frac {d^2}X\sum_{t_1=a_i}^{a_{i+1}-1}
\sum_{a_i\le t_2<a_{i+1},0<\left|t_2-t_1\right|\ll\frac d{a_i}}S\left
(t^2_1-4,t^2_2-4\right)\label{89}\end{equation}
and
\begin{equation}X^{4\delta}\sqrt {\frac d{a_i}}\sum_{t_1=a_i}^{a_{
i+1}-1}\sum_{a_i\le t_2<a_{i+1},0<\left|t_2-t_1\right|}\frac {S\left
(t^2_1-4,t^2_2-4\right)}{\sqrt {\left|t_2-t_1\right|}}.\label{90}\end{equation}
By Lemma \ref{lem:1} we have that \eqref{89} is
$\ll_{\delta}X^{5\delta}\frac {d^2a_i}X\left(\frac d{a_i}+\sqrt {
a_i}\right)$, and this is smaller than $\ll\frac {d^{5/2}}{\sqrt {
X}}X^{-\beta}$
with a $\beta >0$ if $\delta$ is small enough. We get the same
conclusion for \eqref{90} in the following way: we estimate \eqref{90} by
\cite[(3.21)]{Biro}, and the result is an upper bound
$X^{5\delta}\left(d/a_i\right)^{1/2}a_i^{3/2}\ll X^{5\delta}\sqrt {
d}\sqrt {X}$, and this is really smaller than $\ll\frac {d^{5/2}}{\sqrt {
X}}X^{-\beta}$
with a $\beta >0$ if $\delta$ is small enough.

If $a_i\le\frac {\sqrt {X}}2$ and $\left|t_2-t_1\right|\gg\frac d{
a_i}$, then by \eqref{6,2} we see that
the second term in \eqref{6.53} is larger than the first one.
Hence by \eqref{6.53} and \eqref{6.57} we see in the case
$a_i\le\frac {\sqrt {X}}2$ for any $R>0$ that
\begin{equation}\sum_{\left(t_1,t_2,f\right)\in Z_{i,j_1,j_2},\frac
d{a_i}\ll\left|t_1-t_2\right|<R}\left|h\left(t_1^2-4,t_2^2-4,f\right
)S_0\Phi\left(y_2\left(S_0,T_0,F\right)\right)\right|\end{equation}
is
\[\ll_{\delta}X^{6\delta}\frac {d^{5/2}}{a_i^{1/2}X}\sum_{t_1=a_i}^{
a_{i+1}-1}\sum_{a_i\le t_2<a_{i+1},\frac d{a_i}\ll\left|t_2-t_1\right
|<R}\frac {S\left(t^2_1-4,t^2_2-4\right)}{\sqrt {\left|t_2-t_1\right
|}}.\]
For $R=a_{i+1}-a_i$ we have by \cite[(3.21)]{Biro} that this
is $\ll_{\delta}X^{7\delta}\frac {d^{5/2}a_i}X$, and this shows \eqref{14} also for the
case $a_i\le\frac {\sqrt {X}}2$. Taking $R=X^{\frac 12-\alpha}$ we
obtain \eqref{15} for the case $a_i\le\frac {\sqrt {X}}2$ by a dyadic
subdivision and by Lemma \ref{lem:1} above.

Hence we proved \eqref{14} and \eqref{15} completely.

Then for the proof of \eqref{16} we can assume
$\min\left(a_i,\sqrt {X}-a_i\right)\ge x^{\frac 12-\epsilon}$, and it is enough to consider the
part of the summation where $\left|t_1-t_2\right|\ge X^{\frac 12-
\epsilon}$ (here $\epsilon >0$
ia an arbitrary fixed number). Then by \eqref{6.55} and
\eqref{6.57} we see that
\begin{equation}S_0\Phi\left(y_2\left(S_0,T_0,F\right)\right)\ll_{
\delta}X^{5\delta}\frac {d^{3/2}}{X^{3/2}}.\label{20}\end{equation}
We also see from \eqref{11} and \cite[(6.29)]{Biro} that for given $
t_1$ and $t_2$ the
number of possible integers $f$ is $\ll_{\delta}dX^{2\delta}$.

Now let $k:=S\left(t_1^2-4,t_2^2-4,f^2\right)$. Then $k^2$ divides $
t_1^2-4$,
$t_2^2-4$ and $f^2$, hence $k$ divides $t_1^2-4$,
$t_2^2-4$ and $f$. We also have that $k^2\le t_1^2-4\ll X$. Hence the
number of possible integers $t_1$, $t_2$ and $f$ divisible by $k$ is
$\ll_{\delta}\sqrt XX^{\delta}/k$, $\ll_{\delta}\sqrt XX^{\delta}
/k$ and $\ll_{\delta}dX^{3\delta}/k$, respectively. Using
now \eqref{20} and trivial estimates we obtain
\eqref{16}. the lemma is proved.

\end{proof}

\begin{lemma}\label{lem:4}Assume that \eqref{6,2} holds. For $1\le
i\le I_0$ and for
$0\le j_1,j_2\le J$ let us denote by $T_{i,j_1,j_2}$ the following sum:
\[\sum_{\left(t_1,t_2,f\right)\in U_{i,j_1,j_2}}h\left(t_1^2-4,t_
2^2-4,f\right)S_0\Phi\left(y_1\left(S_0,T_0,F\right)\right).\]
Let $\alpha >0$ and let the parameter $\delta$ used in Lemma \ref{lem:2} be small enough in terms of $\alpha$. Then there is a $\beta >0$ such that the following
statements are true.
If $\min\left(a_i,\sqrt {X}-a_i\right)<X^{\frac 12-\alpha}$, then
\begin{equation}T_{i,j_1,j_2}\ll\frac {d^{5/2}}{\sqrt {X}}X^{-\beta}
.\label{34}\end{equation}
If $\min\left(a_i,\sqrt {X}-a_i\right)\ge X^{\frac 12-\alpha}$, then
\begin{equation}\sum_{\left(t_1,t_2,f\right)\in U_{i,j_1,j_2},S\left
(t_1^2-4,t_2^2-4,f^2\right)>X^{\alpha}}\left|h\left(t_1^2-4,t_2^2
-4,f\right)S_0\Phi\left(y_1\left(S_0,T_0,F\right)\right)\right|\label{36}\end{equation}
is $\ll\frac {d^{5/2}}{\sqrt {X}}X^{-\beta}$.
\end{lemma}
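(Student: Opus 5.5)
We follow the proof of Lemma \ref{lem:3}, with the $A$-region \eqref{11} replaced by the $B$-region \eqref{7,9}; the input now comes from \cite[Subsection 6.8]{Biro} instead of \cite[Subsection 6.7]{Biro}. We first extract from \cite[Subsection 6.8]{Biro} the two estimates that play the roles of FACT 1--4. The first counts the admissible $f$ for fixed $t_1,t_2$. Since $\sqrt{(t_1^2-4)(t_2^2-4)}\asymp a_i^2$ and the interval in \eqref{7,9} has length $\frac{d}{a_i^2}X^{\delta}$ in the variable $F$, this number is $\ll 1+dX^{\delta}$; note that $d\ge X^{2/3}$ makes the second term dominant. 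The second is a pointwise bound for $S_0\Phi(y_1(S_0,T_0,F))$ on \eqref{7,9}. We expect it to have the shape
\[
X^{O(\delta)}\left(\frac{d}{X}\right)^{3/2}\cdot\frac{\sqrt X}{a_i}\cdot(\hbox{a factor depending on }\sqrt X-a_i),
\]
in analogy with how \eqref{20} was obtained. This bound does not depend on $|t_1-t_2|$, because near $F\approx B$ the quantity $B(S_0,T_0)$ is never degenerate.

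For \eqref{34}, we combine these two estimates with \cite[Lemma 3.1]{Biro}. That lemma bounds $h(t_1^2-4,t_2^2-4,f)$ by $X^{\epsilon}$ times a divisor-type quantity, controlled by $S(t_1^2-4,t_2^2-4,f^2)$. Summing over $f$ first, we arrive at
\[
X^{O(\delta)}\,d\,\max S_0\Phi\sum_{a_i\le t_1,t_2<a_{i+1}}S\left(t_1^2-4,t_2^2-4\right).
\]
The diagonal terms $t_1=t_2$ are handled by the trivial bound $S(t^2-4,t^2-4)\le t$. The off-diagonal sum is bounded by Lemma \ref{lem:1} (or \cite[(3.20)]{Biro}), giving $X^{\epsilon}\bigl((a_{i+1}-a_i)^2+\sqrt{a_i}(a_{i+1}-a_i)\bigr)$. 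When $\min(a_i,\sqrt X-a_i)<X^{\frac12-\alpha}$, the number of pairs $(t_1,t_2)$ is effectively $\ll\sqrt X\min(a_i,\sqrt X-a_i)$, or the pointwise bound gains the relevant power. Either way we lose a factor $X^{-\alpha}$ against the critical size $d\cdot(d/X)^{3/2}\cdot X=d^{5/2}/\sqrt X$. Taking $\delta$ small in terms of $\alpha$ then yields some $\beta>0$. The regime $a_i\ge\sqrt X/2$ is where \eqref{10} is used: there $\sqrt X-a_i\gg dX^{\delta_0-1/2}$, and this keeps $S_0$ and $T_0$ away from the degenerate size.

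For \eqref{36}, we repeat the argument given at the end of the proof of Lemma \ref{lem:3}. Now $\min(a_i,\sqrt X-a_i)\ge X^{\frac12-\alpha}$, so the pointwise bound becomes $X^{O(\delta+\alpha)}(d/X)^{3/2}$. Put $k=S(t_1^2-4,t_2^2-4,f^2)>X^{\alpha}$. Then $k$ divides $t_1^2-4$, $t_2^2-4$ and $f$, and $k\ll\sqrt X$. For fixed $k$, write the conditions as $t_i\equiv\pm2$-type residues modulo $k$; the number of residue classes is $\ll X^{\epsilon}$. Hence the number of $t_1$, of $t_2$, and of $f$ (the latter in an interval of length $\ll dX^{2\delta}$) is $\ll X^{\epsilon}\sqrt X/k$, $\ll X^{\epsilon}\sqrt X/k$ and $\ll dX^{3\delta}/k$, respectively. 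The $h$-bound gives the extra factor $k$. Summing $k\cdot k^{-3}$ over $k>X^{\alpha}$ saves $X^{-\alpha}$, and this proves \eqref{36} once $\delta,\epsilon\ll\alpha$.

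The main obstacle is the first step: extracting from \cite[Subsection 6.8]{Biro} a clean pointwise bound for $S_0\Phi(y_1)$ on \eqref{7,9} that is uniform in $j_1,j_2$ and in the position of $a_i$. The region near $\sqrt X$ is the delicate one, because there $S_0,T_0$ are small and $B(S_0,T_0)-1$ degenerates. We would first try to copy the computation behind \eqref{6.55}, with $A$ replaced by $B$, and use \eqref{10} to control $\sqrt X-a_i$.
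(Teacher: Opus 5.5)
There is a genuine gap in your proof of \eqref{34}, in the diagonal $t_1=t_2$ near $\sqrt X$, and a smaller one in \eqref{36}. Your overall scheme is the paper's: count of $f$, pointwise bound from \cite[Subsection 6.8]{Biro}, $h\ll X^{\epsilon}S$, Lemma \ref{lem:1} off the diagonal, and the $k$-argument for \eqref{36}.

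\textbf{The diagonal in \eqref{34}.} The pointwise bound available on \eqref{7,9} is \eqref{92}: $S_0\Phi(y_1)\ll_\delta X^{3\delta}d^{3/2}/(X(\sqrt X-a_i))$, uniformly in $j_1,j_2,t_1,t_2$, with no factor $\sqrt X/a_i$. Near $\sqrt X$ this exceeds $(d/X)^{3/2}$ by the factor $\sqrt X/(\sqrt X-a_i)$, and it is of the true order there. Indeed $S_0\asymp T_0\asymp((\sqrt X-a_i)/\sqrt X)^{1/2}$ and $B-F\asymp d/X$ for most $f$, so $y_1^2\asymp d/(\sqrt X(\sqrt X-a_i))$. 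So near $\sqrt X$ the pointwise bound loses; it does not ``gain the relevant power''.

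Take $\sqrt X-a_i<X^{\frac12-\alpha}$ and $a_{i+1}-a_i\asymp\sqrt X-a_i$, which is the typical case. Off the diagonal, Lemma \ref{lem:1} gives $X^{O(\delta)}\frac{d^{5/2}}{X}(\sqrt X-a_i+X^{1/4})$, which is fine. On the diagonal, your bound $S(t^2-4,t^2-4)\le t$ gives
\[
X^{O(\delta)}\,d\cdot\frac{d^{3/2}}{X(\sqrt X-a_i)}\cdot\sqrt X\,(a_{i+1}-a_i)\asymp X^{O(\delta)}\frac{d^{5/2}}{\sqrt X}.
\]
This has no saving at all, however small $\sqrt X-a_i$ is.

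What is missing is an averaged bound for $\sum_{a_i\le t<a_{i+1}}S(t^2-4,t^2-4)$; the paper uses \cite[(3.22)]{Biro}. A bound $X^{\epsilon}\bigl((a_{i+1}-a_i)+X^{2/3}\bigr)$ suffices. It follows by counting pairs $(t,k)$ with $k^2\mid t^2-4$: for small $k$ use congruences, and for large $k$ use the Pell equations $t^2-mk^2=4$ with $m\ll X/k^2$, each of which has $O(\log X)$ solutions with $t\ll\sqrt X$. The $X^{2/3}$ term then contributes
\[
\frac{d^{5/2}X^{2/3+\epsilon}}{X(\sqrt X-a_i)}\le\frac{d^{5/2}}{\sqrt X}\cdot\frac{X^{2/3+\epsilon}}{d\,X^{\delta_0}}\le\frac{d^{5/2}}{\sqrt X}X^{\epsilon-\delta_0},
\]
using \eqref{10} and \eqref{6,2}. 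This is where \eqref{10} is really needed, not merely to keep $S_0,T_0$ nondegenerate.

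\textbf{The bookkeeping in \eqref{36}.} The same factor causes trouble here. Under $\min(a_i,\sqrt X-a_i)\ge X^{\frac12-\alpha}$ the pointwise bound is $X^{3\delta+\alpha}(d/X)^{3/2}$. The loss $X^{\alpha}$ exactly cancels the $X^{-\alpha}$ you gain from $\sum_{k>X^{\alpha}}k^{-2}$, so taking $\delta,\epsilon\ll\alpha$ leaves no saving.

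The fix is the reduction used in the proof of \eqref{16}. First apply \eqref{34} with a smaller parameter $\alpha'<\alpha$ in place of $\alpha$, taking $\delta$ small in terms of $\alpha'$. This disposes of the $i$ with $\min(a_i,\sqrt X-a_i)<X^{\frac12-\alpha'}$. For the remaining $i$ the loss is only $X^{3\delta+\alpha'}$, and the gain $X^{-\alpha}$ beats it.
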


\begin{proof}It is proved in \cite[Subsection
6.8]{Biro} assuming that \eqref{7,9} holds that
\begin{equation}S_0\Phi\left(S_0,y_1\left(S_0,T_0,F\right)\right)
\ll_{\delta}X^{3\delta}\frac {d^{3/2}}{X\left(\sqrt {X}-a_i\right
)},\label{92}\end{equation}
and for given $t_1$ and $t_2$ the number of possible values of the integers $
f$
satisfying \eqref{7,9} is $\ll_{\delta}X^{\delta}d$. Therefore $T_{
i,j_1,j_2}$ is
\begin{equation}\ll_{\delta}X^{5\delta}\frac {d^{5/2}}{X\left(\sqrt {
X}-a_i\right)}\sum_{t_1=a_i}^{a_{i+1}-1}\sum_{t_2=a_i}^{a_{i+1}-1}
S\left(t^2_1-4,t^2_2-4\right).\label{93}\end{equation}
We estimate the $t_1=t_2$ part here by \cite[(3.22)]{Biro}, and
we get by \eqref{10} and \eqref{6,2} that this part gives
$\ll\frac {d^{5/2}}{\sqrt {X}}X^{-\beta}$ for some $\beta >0$ if $
\delta$ is small enough. The $t_1\neq t_2$
part of the sum here is estimated by Lemma \ref{lem:1}
above. we get in this way that the $t_1\neq t_2$ part of
\eqref{93} is
\[\ll_{\delta}X^{5\delta}\frac {d^{5/2}}{X\left(\sqrt {X}-a_i\right
)}\left(\sqrt {a_i}+\min\left(\sqrt {X}-a_i,a_i\right)\right)\min\left
(\sqrt {X}-a_i,a_i\right).\]
This proves \eqref{34}. For the proof of \eqref{36} we
use \eqref{92} instead of \eqref{20}, otherwise the proof is completely similar
to the proof of \eqref{16}. The lemma is proved.
\end{proof}

\subsection{Asymptotics for $S_0\Phi\left(y_j\left(S_0,T_0,F\right)\right)$.}

\begin{lemma}\label{lem:5}Let $1\le i\le I_0$ and $0\le j_1,j_2\le
J$,
and let $t_1,t_2$ satisfy \eqref{9}. Write $S_0=S_0\left(j_1,t_1\right
)$,
$T_0=T_0\left(j_2,t_2\right)$. Assume \eqref{6,2},
\begin{equation}B\left(S_0\left(j_1,t_1\right),T_0\left(j_2,t_2\right
)\right)-\frac 1{X^{1/1000}}\le F\le B\left(S_0\left(j_1,t_1\right
),T_0\left(j_2,t_2\right)\right)\label{78,78}\end{equation}
and let
\begin{equation}\min\left(a_i,\sqrt {X}-a_i\right)\ge X^{\frac 12
-\delta}\label{79}\end{equation}
hold with some $\delta >0$. If $\delta$ is small enough, then we have that
\[S_0\Phi\left(y_1\left(S_0,T_0,F\right)\right)=A_X\left(t_1,t_2\right
)\left(B\left(S_0,T_0\right)-F\right)^{3/2}+O\left(\frac {d^{3/2}}{
X^{3/2}}X^{-\beta_0}\right)\]
with an absolute constant $\beta_0>0$, where
\begin{equation}A_X\left(t_1,t_2\right):=\frac {\sqrt {X-t_1^2}}{
t_1}\frac X{3t_1^2}\left(\frac {\left(\frac {2X}{t_1t_2}\right)^{
1/2}}{\frac {\sqrt {X-t_2^2}}{t_2}+\frac {\sqrt {X-t_1^2}}{t_1}\left
(\frac {X+\sqrt {X-t_1^2}\sqrt {X-t_2^2}}{t_1t_2}\right)}\right)^
3.\label{79,1}\end{equation}
\end{lemma}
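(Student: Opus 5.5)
We do not have the explicit definitions of $y_1$ and $\Phi$ in front of us; they are in \cite[Lemma 4.3]{Biro}. The plan below relies on the structure used there: $\Phi(y)$ behaves like a constant multiple of $y^{3/2}$ as $y\to 0^+$, with a relative error $O(y)$. Also, $y_1(S_0,T_0,F)$ vanishes linearly in $B(S_0,T_0)-F$ when $F$ approaches $B(S_0,T_0)$ from below, and its coefficient is a smooth explicit function of $S_0,T_0$.

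\textbf{Step 1 (local expansion).} We write
\[y_1(S_0,T_0,F)=\kappa(S_0,T_0)\bigl(B(S_0,T_0)-F\bigr)\bigl(1+O\bigl((B-F)\,\lambda(S_0,T_0)\bigr)\bigr),\]
where $\kappa$ and $\lambda$ come from differentiating the defining expression of $y_1$ in $F$ at the point $F=B$. We then insert the expansion $\Phi(y)=c\,y^{3/2}(1+O(y))$. This gives
\[S_0\Phi(y_1)=c\,S_0\kappa^{3/2}(B-F)^{3/2}\bigl(1+O(\text{small})\bigr).\]

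\textbf{Step 2 (size of the parameters).} By \eqref{79} and \eqref{9}, all the quantities $t_1$, $t_2$, $\sqrt{X}-t_1$, $\sqrt{X}-t_2$ lie between $X^{1/2-2\delta}$ and $\sqrt X$. Hence $S_0$ and $T_0$ are between $X^{-\delta'}$ and $X^{\delta'}$. Next we replace $X-j_1d\tau-2$ by $X$ inside $S_0$ and $T_0$. This introduces a relative error
\[O\Bigl(\frac{d}{X-t_i^2}\Bigr)\ll \frac{d}{X}X^{2\delta}\le X^{-1/100+2\delta},\]
using \eqref{6,2}. After the replacement, $S_0\approx \sqrt{X-t_1^2}/t_1$ and $T_0\approx \sqrt{X-t_2^2}/t_2$. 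We also have $B\approx (X+\sqrt{X-t_1^2}\sqrt{X-t_2^2})/(t_1t_2)$. Substituting these into $c\,S_0\kappa^{3/2}$ should reproduce exactly the factor $A_X(t_1,t_2)$ of \eqref{79,1}. This is an algebraic check using the formula for $y_1$ from \cite{Biro}. The prefactors $X/(3t_1^2)$ and $(2X/(t_1t_2))^{1/2}$ should arise from $\partial y_1/\partial F$ and the constant $c$.

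\textbf{Step 3 (error terms).} Two errors must be controlled.

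First, the replacement $X-j_1d\tau\to X$ gives a relative error $X^{-1/100+O(\delta)}$. It multiplies the main term, which is of size $A_X(B-F)^{3/2}$. To bound this we need $B-F\ll d/X$ up to small powers of $X$, which is the relevant range coming from \eqref{7,9} and \cite[(6.29)]{Biro}. This is also consistent with the bound \eqref{92}. With that, the main term is $\ll X^{O(\delta)}(d/X)^{3/2}$, and the error is $\ll (d/X)^{3/2}X^{-1/100+O(\delta)}$.

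Second, the relative error $O(y_1)$ is also $\ll X^{O(\delta)}d/X\le X^{-1/100+O(\delta)}$.

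Both errors are therefore $O\bigl(d^{3/2}X^{-3/2}X^{-\beta_0}\bigr)$ once $\delta$ is small enough, for instance with $\beta_0=1/200$.

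\textbf{Main obstacle.} The hardest point is Step 3 in the regime where \eqref{78,78} only gives $B-F\le X^{-1/1000}$, which is much larger than $d/X$. In that regime the Taylor expansion error $O(y_1)$ is only $X^{-1/1000}$ relative to the main term, and the main term can then exceed $(d/X)^{3/2}$. I would handle this using the support of $\Phi$: by \cite[Lemma 4.3]{Biro}, $\Phi(y_1)$ is only nonzero, or only non-negligible, when $B(S_0,T_0(0,t_2))-F$ is nonnegative. Since $B(S_0,T_0(0,t_2))-B(S_0,T_0(j_2,t_2))\ll d X^{O(\delta)}/X$, this confines $F$ to a window of width $X^{O(\delta)}d/X$ below $B$, and on that window both sides of the claimed identity are $\ll X^{O(\delta)}(d/X)^{3/2}$. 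If this support property does not hold exactly, the fallback is to keep the relative error $O(y_1)$ and to use the window $B-F\le X^{-1/1000}$ only together with this truncation.
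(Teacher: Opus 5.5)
\textbf{There is a genuine gap in Step~1, which is essentially the whole content of the lemma: your local model for $y_1$ and $\Phi$ is wrong.} Your overall template (expand near $F=B$, expand $\Phi$ at $0$, replace $S_0,T_0$ by their approximations in terms of $X$) is the same as the paper's. However, by \cite[(6.65)]{Biro},
\[
y_1(S_0,T_0,F)=\frac{\sqrt{\bigl(B(S_0,T_0)-F\bigr)\bigl(A(S_0,T_0)+F\bigr)}}{T_0+S_0F}.
\]
So $y_1$ vanishes like $\sqrt{B-F}$, not linearly. Its derivative $\partial y_1/\partial F$ is infinite at $F=B$, so an expansion $y_1=\kappa(B-F)(1+\cdots)$ does not exist.

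Correspondingly, $\Phi$ vanishes cubically: $\Phi(y_1)=(1+S_0^2)\frac{y_1^3}{3}\bigl(1+O(X^{-\beta_3})\bigr)$ for small $y_1$. Its leading coefficient depends on $S_0$; it is not $c\,y^{3/2}$ with an absolute constant. The exponent $3/2$ on $B-F$ agrees with yours only by coincidence, and your attribution of the factors of $A_X$ in Step~2 is wrong:
\begin{itemize}
\item $\frac{X}{3t_1^2}$ is $(1+S_0^2)/3$, coming from $\Phi$.
\item The cubed bracket in \eqref{79,1} is the cube of $\sqrt{A+B}/(T_0+S_0B)$. This uses $A+B=2\sqrt{(1+S_0^2)(1+T_0^2)}\approx 2X/(t_1t_2)$ and $B\approx\bigl(X+\sqrt{X-t_1^2}\sqrt{X-t_2^2}\bigr)/(t_1t_2)$.
\end{itemize}
With the correct formula the proof is short:
\begin{itemize}
\item Evaluate $(A+F)/(T_0+S_0F)^2$ at $F=B$, at relative cost $O(B-F)$.
\item Replace $X-j_1d\tau-2$ and $t_k^2-4$ by $X$ and $t_k^2$, at relative cost $X^{-1/100+O(\delta)}$.
\item Note that $y_1\ll X^{O(\delta)-1/2000}$, and expand $\Phi$.
\end{itemize}

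\textbf{Your ``main obstacle'' is a correct observation, but your proposed fix does not work.} The argument only yields $S_0\Phi(y_1)=A_X(B-F)^{3/2}\bigl(1+O(X^{-\beta})\bigr)$. This becomes the stated additive error only when $B-F\ll\frac dX X^{O(\delta)}$. The paper's proof establishes exactly this relative form and says nothing more.

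The mechanism you propose is not available. The quantity $\Phi(y_1(S_0,T_0,F))$ involves only $T_0=T_0(j_2,t_2)$, and it is $\asymp A_X(B-F)^{3/2}\neq 0$ on the whole window \eqref{78,78}. It carries no information about $T_0(0,t_2)$.

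The restriction comes instead from the ranges where the lemma is applied. On $U_{i,j_1,j_2}$ we have $T_0(j_2,t_2)\le T_0(0,t_2)$, and $B$ is increasing in $T$. Hence \eqref{7,9} gives
\[
B(S_0,T_0)-F<\frac d{a_i^2}X^{\delta}\ll\frac dX X^{3\delta}.
\]
The cutoff \eqref{602,32} in Lemma~\ref{lem:11,9} gives a bound of the same kind. So your fallback, the relative error combined with truncation to that range, is the right reading, and it is all the later lemmas use.
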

\begin{proof}It is easy to compute (see \cite[(6.65)]{Biro}) that
\[y_1\left(S_0,T_0,F\right)=\sqrt {\frac {\left(B\left(S_0,T_0\right
)-F\right)\left(A\left(S_0,T_0\right)+F\right)}{\left(T_0+S_0F\right
)^2}},\]
and then it follows from the definition of $S_0$ and $T_0$,
using \eqref{6,2}, \eqref{78,78} and \eqref{79} with a small enough $
\delta >0$ that $y_1\left(S_0,T_0,F\right)$ equals
\[\sqrt {B\left(S_0,T_0\right)-F}\frac {\left(\frac {2X}{t_1t_2}\right
)^{1/2}}{\frac {\sqrt {X-t_2^2}}{t_2}+\frac {\sqrt {X-t_1^2}}{t_1}\left
(\frac {X+\sqrt {X-t_1^2}\sqrt {X-t_2^2}}{t_1t_2}\right)}\left(1+
O\left(X^{-\beta_1}\right)\right)\]
with an absolute constant $\beta_1>0$. This shows also under the
same conditions that $y_1\left(S_0,T_0,F\right)=O\left(X^{-\beta_
2}\right)$ with an
absolute constant $\beta_2>0$. It is easy to see by the
definition of the function $\Phi$ in \cite[Lemma 4.3]{Biro} that
\[\Phi\left(y_1\left(S_0,T_0,F\right)\right)=\left(1+S_0^2\right)\frac {\left
(y_1\left(S_0,T_0,F\right)\right)^3}3\left(1+O\left(X^{-\beta_3}\right
)\right)\]
with an absolute constant $\beta_3>0$. The lemma follows.
\end{proof}

\begin{lemma}\label{lem:6}Let $1\le i\le I_0$ and $0\le j_1,j_2\le
J$,
and let $t_1,t_2$ satisfy \eqref{9}. Write $S_0=S_0\left(j_1,t_1\right
)$,
$T_0=T_0\left(j_2,t_2\right)$. Assume \eqref{6,2},
\begin{equation}A\left(S_0\left(j_1,t_1\right),T_0\left(j_2,t_2\right
)\right)-\frac 1{X^{1/1000}}\le F\le A\left(S_0\left(j_1,t_1\right
),T_0\left(j_2,t_2\right)\right)\label{78,780}\end{equation}
and let \eqref{79} and
\begin{equation}\left|t_1-t_2\right|\ge X^{\frac 12-\delta}\label{79,15}\end{equation}
hold with some $\delta >0$. If $\delta$ is small enough, then we have that
\[S_0\Phi\left(y_2\left(S_0,T_0,F\right)\right)=B_X\left(t_1,t_2\right
)\left(A\left(S_0,T_0\right)-F\right)^{3/2}+O\left(\frac {d^{3/2}}{
X^{3/2}}X^{-\beta_0}\right)\]
with an absolute constant $\beta_0>0$, where
\begin{equation}B_X\left(t_1,t_2\right):=\frac {X\sqrt {X-t_1^2}}
3\left(\frac {\left(2t_1t_2\right)^{1/2}\left(\sqrt {X-t_1^2}+\sqrt {
X-t_2^2}\right)}{\left|t_1^2-t_2^2\right|\sqrt {X}}\right)^3.\label{79,2}\end{equation}
\end{lemma}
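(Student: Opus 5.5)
The argument parallels Lemma \ref{lem:5}, with $y_2$ and $A$ in place of $y_1$ and $B$. I will use the explicit formula for $y_2$ analogous to \cite[(6.65)]{Biro}:
\[y_2\left(S_0,T_0,F\right)=\sqrt {\frac {\left(A\left(S_0,T_0\right)-F\right)\left(B\left(S_0,T_0\right)+F\right)}{\left(T_0-S_0F\right)^2}}.\]
This is the same computation as for $y_1$, with the root $F=A$ of the quadratic replacing $F=B$. The plan is to show that, under \eqref{78,780}, the factor multiplying $\sqrt{A(S_0,T_0)-F}$ equals an explicit function of $t_1,t_2$ up to a factor $1+O(X^{-\beta_1})$, and that $y_2$ is small.

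First I will write asymptotics for $S_0,T_0$. By \eqref{6,2} we have $jd\tau\ll X^{99/100}$, so
\[S_0=\frac{\sqrt{X-t_1^2}}{t_1}\left(1+O(X^{-\beta})\right),\]
using $\sqrt X-t_1\gg X^{1/2-\delta}$ from \eqref{79}; the analogous statement holds for $T_0$. Next I need the size of $A(S_0,T_0)$ and of $B+F$. By \eqref{79,15} and \eqref{741}, $A-1\gg X^{-\epsilon}$ and $A\asymp$ a power-bounded quantity. Since $F$ lies within $X^{-1/1000}$ of $A$, we get $B+F=A+B+O(X^{-1/1000})=2\sqrt{(1+S_0^2)(1+T_0^2)}\,(1+O(X^{-\beta}))$.

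For the denominator, $T_0-S_0F=T_0-S_0A+O(S_0X^{-1/1000})$. Computing $T_0-S_0A$ in terms of $t_1,t_2$ is the step where the factor $|t_1^2-t_2^2|$ of \eqref{79,2} should appear. Rationalizing,
\[T_0-S_0A=\frac{T_0(1+S_0^2)-S_0\sqrt{(1+S_0^2)(1+T_0^2)}+\dots}{\dots},\]
and indeed $T_0\sqrt{1+S_0^2}-S_0\sqrt{1+T_0^2}=\frac{T_0^2-S_0^2}{T_0\sqrt{1+S_0^2}+S_0\sqrt{1+T_0^2}}$. With $1+S_0^2\approx X/t_1^2$ we get $T_0^2-S_0^2\approx X(t_1^2-t_2^2)/(t_1^2t_2^2)$, so that
\[T_0-S_0A=\sqrt{1+S_0^2}\,\frac{T_0^2-S_0^2}{T_0\sqrt{1+S_0^2}+S_0\sqrt{1+T_0^2}}.\]
The main obstacle is to check that this is genuinely of size $\gg X^{-\epsilon'}$ relative to its natural scale, so that the error $O(S_0X^{-1/1000})$ and the relative errors in $S_0,T_0$ are negligible. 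This is where \eqref{79,15} enters: it gives $|t_1^2-t_2^2|\gg X^{1-2\delta}$. Choosing $\delta$ small compared to $1/1000$ then yields the relative error $O(X^{-\beta_1})$.

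Substituting these asymptotics gives
\[y_2=\sqrt{A-F}\;\frac{(2t_1t_2)^{1/2}\left(\sqrt{X-t_1^2}+\sqrt{X-t_2^2}\right)}{|t_1^2-t_2^2|\sqrt X}\,(1+O(X^{-\beta_1})).\]
Since $A-F\le X^{-1/1000}$, this shows $y_2=O(X^{-\beta_2})$. Then, as in Lemma \ref{lem:5}, the Taylor expansion of $\Phi$ near $0$ from \cite[Lemma 4.3]{Biro} gives $\Phi(y_2)=(1+S_0^2)\frac{y_2^3}{3}(1+O(X^{-\beta_3}))$. Multiplying by $S_0(1+S_0^2)\approx X\sqrt{X-t_1^2}/t_1^3$ and collecting the powers of $t_1$ yields $B_X(t_1,t_2)(A-F)^{3/2}$.

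For the error term, the main term is bounded by $B_X(A-F)^{3/2}$. Using the bound of Fact 2 and Fact 4 in Lemma \ref{lem:3}, or directly $(A-F)\ll d|t_1-t_2|X^{\delta}/(a_i(X-a_i^2))$ in the relevant range, this gives $O(d^{3/2}X^{-3/2}X^{c\delta})$. Multiplying by the relative error $X^{-\beta}$ then gives $O(d^{3/2}X^{-3/2-\beta_0})$, as claimed.
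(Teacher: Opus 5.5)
Your route is the same as the paper's. You use the exact formula $y_2=\sqrt{(A-F)(B+F)}/|T_0-S_0F|$, the asymptotics $1+S_0^2\approx X/t_1^2$ and $S_0\approx\sqrt{X-t_1^2}/t_1$ (and likewise for $T_0$), and the identity $T_0-S_0A=\sqrt{1+S_0^2}\,(T_0^2-S_0^2)/(T_0\sqrt{1+S_0^2}+S_0\sqrt{1+T_0^2})$ (the paper appeals to \cite[(6.30)]{Biro} here). You then use $|t_1^2-t_2^2|\gg X^{1-2\delta}$ from \eqref{79} and \eqref{79,15}, and the Taylor expansion of $\Phi$ exactly as in Lemma \ref{lem:5}.

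However, your displayed asymptotic for $y_2$ has lost a factor $t_1$. Your own computation gives
\[T_0-S_0A\approx \frac{X\left(t_1^2-t_2^2\right)}{t_1^2t_2\left(\sqrt{X-t_1^2}+\sqrt{X-t_2^2}\right)},\qquad B+F\approx\frac{2X}{t_1t_2},\]
hence
\[y_2=\sqrt{A-F}\,\frac{t_1\left(2t_1t_2\right)^{1/2}\left(\sqrt{X-t_1^2}+\sqrt{X-t_2^2}\right)}{\left|t_1^2-t_2^2\right|\sqrt X}\left(1+O\left(X^{-\beta_1}\right)\right),\]
which is the formula in the paper. The factor $t_1^3$ in $y_2^3$ is exactly what cancels the $t_1^{-3}$ in $S_0(1+S_0^2)\approx X\sqrt{X-t_1^2}/t_1^3$. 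With your display the product would be $t_1^{-3}B_X(t_1,t_2)(A-F)^{3/2}$, so ``collecting the powers of $t_1$'' does not give $B_X$ as written. Note also that with the correct factor, the coefficient of $\sqrt{A-F}$ is only $\ll X^{2\delta}$, not small. So $y_2=O(X^{-\beta_2})$ really uses that $\delta$ is small compared with $1/1000$, which you do assume.

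On the error term, your last paragraph uses $A-F\ll (d/X)X^{O(\delta)}$, and this does not follow from \eqref{78,780}, which only gives $A-F\le X^{-1/1000}$. A relative error $X^{-\beta}$ becomes $O(d^{3/2}X^{-3/2-\beta_0})$ only once the main term $B_X(A-F)^{3/2}$ is known to be $\ll (d/X)^{3/2}X^{O(\delta)}$; here $B_X\ll X^{6\delta}$. This extra input is genuinely needed, because the $F$-dependence of $\sqrt{B+F}/|T_0-S_0F|$ already produces a relative error of order $A-F$.

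The paper's ``the end of the proof is the same as in Lemma \ref{lem:5}'' tacitly relies on the same bound. That bound is available where the lemma is applied, namely for triples in $Z_{i,j_1,j_2}$: by \eqref{11}, and since $A(S_0(j_1,t_1),T_0(0,t_2))-A(S_0(j_1,t_1),T_0(j_2,t_2))\ll (d/X)X^{O(\delta)}$ under \eqref{79}. So your treatment is, if anything, more careful than the paper's. You should state this bound as an explicit extra hypothesis rather than borrow it from the proof of Lemma \ref{lem:3}.
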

\begin{proof}It is easy to compute (see the displayed
formula below \cite[(6.53)]{Biro}) that
\[y_2\left(S_0,T_0,F\right)=\sqrt {\frac {\left(B\left(S_0,T_0\right
)+F\right)\left(A\left(S_0,T_0\right)-F\right)}{\left(T_0-S_0F\right
)^2}},\]
and then it follows from the definition of $S_0$ and $T_0$,
\cite[(6.30)]{Biro}, using \eqref{6,2}, \eqref{78,780},
\eqref{79} with a small enough $\delta >0$ that $y_2\left(S_0,T_0
,F\right)$ equals
\[\sqrt {A\left(S_0,T_0\right)-F}\frac {t_1\left(2t_1t_2\right)^{
1/2}\left(\sqrt {X-t_1^2}+\sqrt {X-t_2^2}\right)}{\left|t_1^2-t_2^
2\right|\sqrt {X}}\left(1+O\left(X^{-\beta_1}\right)\right)\]
with an absolute constant $\beta_1>0$. This shows also under the
same conditions that $y_2\left(S_0,T_0,F\right)=O\left(X^{-\beta_
2}\right)$ with an
absolute constant $\beta_2>0$. The end of the proof is the
same as the end of the proof of Lemma \ref{lem:5}.
\end{proof}

\subsection{Arithmetic reduction of the summation over $t_1,t_2,f$.}

\begin{lemma}\label{lem:7,5}
Let $y\ge 9$ and let $u,v>0$ be integers such that
$u^2>16y^2$ and $v\gg y^a$, $u+v\ll y^b$ with some absolute
constants $a,b>0$. For integers $t_1,t_2,f$ satisfying
$\sqrt {y}\le t_1,t_2\le 2\sqrt {y}$ and $u\le f<u+v$ let us write
\[E_{t_1,f}:=\gcd\left(t_1^2-4,f\right),\]
\begin{equation}G_{t_1,t_2,f}:=\prod_{p|2E_{t_1,f}}p^{\nu_p\left(
f^2-\left(t_1^2-4\right)\left(t_2^2-4\right)\right)},\label{222}\end{equation}
\[R_{t_1,f}:=\prod_{p|2E_{t_1,f}}p^{\nu_p\left(t_1^2-4\right)}.\]
Then for every $c>0$ there is a $d>0$ such that for any $t_1,t_2$ satisfying
$\sqrt {y}\le t_1,t_2\le 2\sqrt {y}$ we have
\begin{equation}\left|\left\{f\in\mathbb{Z}:u\le f<u+v,G_{t_1,t_2
,f}\ge y^c\right\}\right|\ll_{c,d}vy^{-d},\label{74}\end{equation}
and for any $f$ satisfying $u\le f<u+v$ we have
\begin{equation}\left|\left\{t_1\in\mathbb{Z}:\sqrt {y}\le t_1\le
2\sqrt {y},R_{t_1,f}\ge y^c\right\}\right|\ll_{c,d}y^{\frac 12-d}
.\label{75}\end{equation}
\end{lemma}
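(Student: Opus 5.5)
We reduce both estimates to counting solutions of polynomial congruences modulo prime powers dividing a large integer. For \eqref{75} we also handle one subtle case: a prime power $p^k$ that divides $f$ exactly while $p^{k}$ or more divides $t_1^2-4$.

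\emph{Proof of \eqref{74}.} Fix $t_1,t_2$ and put $N:=f^2-(t_1^2-4)(t_2^2-4)$. Since $u^2>16y^2\ge (t_1^2-4)(t_2^2-4)$, we have $N\neq 0$ and $N\ll y^{2b}$. If $G_{t_1,t_2,f}\ge y^c$, then $G$ is a product of at most $\omega(2E_{t_1,f})\ll \log y/\log\log y$ prime powers. So some prime power $p^{k}$ with $p\mid 2(t_1^2-4)$ and $p^k\ge y^{c/2}$ divides $N$; more precisely, $p^k \ge G^{1/\omega}$, and a pigeonhole on the sizes suffices after adjusting $c$. The primes $p$ here range over the $\ll y^{\epsilon}$ divisors of $2(t_1^2-4)$, a set independent of $f$.

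For such a fixed $p$, take the least $k$ with $p^k\ge y^{c/2}$ (so $p^k\le p\,y^{c/2}$, or $p^k$ itself is prime when $p>y^{c/2}$). Count $f$ in the interval with $f^2\equiv D \pmod{p^k}$, where $D=(t_1^2-4)(t_2^2-4)$. There are two cases.

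\begin{itemize}
\item If $p^{k}\nmid D$, write $\nu_p(D)=m<k$. Solutions require $p^{\lceil m/2\rceil}\mid f$, and then $f/p^{m/2}$ ranges over $O(1)$ square-root classes modulo $p^{k-m}$ (for $p=2$, $O(1)$ classes modulo $2^{k-m-2}$). So the number of classes modulo $p^k$ is $\ll p^{m/2}\cdot p^{O(1)}$.
\item If $p^k\mid D$, the condition is $p^{\lceil k/2\rceil}\mid f$, giving $\ll p^{k/2}$ classes.
\end{itemize}

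In either case the proportion of $f$ is $\ll p^{-k/2+O(1)}\le y^{-c/4+o(1)}$. The count is therefore $\ll v y^{-c/4+\epsilon}+p^{k}\cdot(\#\text{classes})$. We must ensure the interval length $v\gg y^a$ exceeds the modulus $p^k$. To do this we choose the threshold $y^{c'}$ with $c'=\min(c/2,a/2)$, which is legitimate because $G\ge y^c$ forces a prime power factor at least $y^{c'}$. Summing over the $\ll y^\epsilon$ primes gives $\ll v y^{-d}$ with, say, $d=c'/5$.

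\emph{Proof of \eqref{75}.} Fix $f\neq 0$ with $|f|\ll y^b$. If $R_{t_1,f}\ge y^c$, then as before some prime $p\mid 2f$ (the primes of $E_{t_1,f}$ divide $f$) has $p^{\nu_p(t_1^2-4)}\ge y^{c'}$. There are $\ll y^\epsilon$ such primes. For each, choose $k$ minimal with $p^k\ge y^{c'}$ and $p^k\le y^{1/4}$, which is possible when $c'<1/4$; if $p$ is itself larger than $y^{1/4}$, take $k=1$, since $p\ll y$ is forced anyway. The congruence $t_1^2\equiv 4\pmod{p^k}$ has $O(1)$ solutions modulo $p^k$, so the count of $t_1\in[\sqrt y,2\sqrt y]$ is $\ll \sqrt y\,p^{-k}+1\ll y^{1/2-c'}$. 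The exception is a prime $p>\sqrt y$; then $p\mid t_1\pm2$ and there are $O(1)$ such $t_1$. Summing over $p$ gives $y^{1/2-d}$.

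\emph{Main obstacle.} I expect the real care to be needed in \eqref{74}, in the case where $p^k\mid D$ and $p$ is large. There the congruence $f^2\equiv D$ has many solutions, about $p^{k/2}$ classes, and the interval of length $v$ must be compared with the modulus $p^k$. For a single large prime $p>y^{a}$ the modulus could exceed $v$. In that case I would instead use $k=1$ and the fact that $f^2\equiv D\pmod p$ has at most $2$ solutions when $p\nmid D$. If $p\mid D$, then $p\mid f$, which forces $p\le u+v\ll y^b$, and the count is $\ll v/p+1$. Handling the ``$+1$'' term requires $p\le v\,y^{-d}$, or else bounding the number of such large $p$ dividing $t_1^2-4$ by $O(1)$ and noting that each contributes $O(1)$ values of $f$, which is $\ll vy^{-d}$ since $v\gg y^a$.
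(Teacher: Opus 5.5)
\textbf{There is a genuine gap.} In both parts you reduce $G_{t_1,t_2,f}\ge y^c$ (resp.\ $R_{t_1,f}\ge y^c$) to the existence of a \emph{single} prime power $p^k\ge y^{c'}$ dividing $N=f^2-(t_1^2-4)(t_2^2-4)$ (resp.\ $t_1^2-4$). That reduction is false.

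Pigeonhole over the $\omega=\omega(2E_{t_1,f})$ prime-power factors only gives a factor of size at least $G^{1/\omega}\ge y^{c/\omega}$. Since $\omega$ can be of order $\log y/\log\log y$, this is only about $(\log y)^{c}$. No adjustment of the constant $c$ repairs this.

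Here is a concrete case. Take $t_1$ odd with $\nu_p(t_1-2)=1$ for every odd prime $p\le\theta\log y$. Take $t_2$ with $p\nmid t_2^2-4$ for these $p$, and $f$ divisible by all of them. Then $\nu_p(t_1^2-4)=\nu_p(N)=1$ for each such $p$. Hence $R_{t_1,f}$ and $G_{t_1,t_2,f}$ are at least $y^{\theta+o(1)}$, yet every prime-power factor of them is $O(\log y)$. Such $f$ (resp.\ $t_1$) are not covered by any of your single-prime-power counts, so \eqref{74} and \eqref{75} are not proved for $c<\theta$. Your worry about one large prime is harmless; the real difficulty is many small primes.

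The missing idea is to work with composite moduli while controlling how many moduli occur. The paper does this as follows.

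\begin{enumerate}
\item It fixes $E=E_{t_1,f}$. This is a divisor of $t_1^2-4$ (or of $f$), so there are $\ll y^{\epsilon}$ choices.
\item It then fixes $G$ or $R$ itself. This also has only $\ll y^{\epsilon}$ possible values, because it is an integer $\ll y^{2b}$ composed of the $\ll\log y$ primes dividing $2E$ (Granville's smooth-number bound).
\item For \eqref{75} it counts $t_1$ with $R\mid t_1^2-4$ directly, using that $t^2\equiv 4$ has $\ll y^{\epsilon}$ roots modulo $R$.
\item For \eqref{74} it splits into two cases.
\begin{itemize}
\item If $E\ge y^{e}$, the condition $E\mid f$ alone saves $y^{e}$. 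This is exactly the regime of the example above.
\item Otherwise every prime of $G$ is $\ll y^{e}$, so $G$ has a divisor $G_0$ with $y^{c_0}\ll G_0\ll v$. The congruence $G_0\mid f^2-(t_1^2-4)(t_2^2-4)$ has $\ll y^{\epsilon}G_0^{1/2}$ roots modulo $G_0$, giving $\ll y^{\epsilon}vG_0^{-1/2}$ values of $f$.
\end{itemize}
\end{enumerate}

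Your local root counts for $f^2\equiv D \pmod{p^k}$ and $t_1^2\equiv 4\pmod{p^k}$ are correct and could be salvaged. When no single factor is large, greedily group full prime-power factors into a divisor $G_0$ (resp.\ $R_0$) of size between $y^{c'}$ and $y^{2c'}$. Then multiply your local counts and take a union over the $y^{o(1)}$ possible such divisors. That counting of the possible moduli is precisely the step your proposal lacks.
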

\begin{proof}If $t_1$ is fixed or if $f$ is fixed, then the number of
possible values of $E_{t_1,f}$ is  $\ll_{\epsilon}y^{\epsilon}$ for any $
\epsilon >0$. So during the proof of
either \eqref{74} or \eqref{75} we can fix $E=E_{t_1,f}$.

The number $\omega\left(E\right)$ of prime factors of $E$ is $\ll\log
y$ (by a very
crude estimate). Then by \cite[(1.14)]{Granville} we see that
\begin{equation}\left|\left\{n\in\mathbb{Z}:0<n\le\left(u+v\right
)^2+4y,p|n\implies p|E\right\}\right|\ll_{\epsilon}y^{\epsilon}\label{987}\end{equation}
for every $\epsilon >0$. We used here that the number of integers
up to $\left(u+v\right)^2+4y$ composed of
the powers of the first $\omega\left(E\right)$ primes is an upper
bound for the left-hand side of \eqref{987}.

Let $c>0$ and assume $G_{t_1,t_2,f}\ge y^c$ in the case of
\eqref{74} and $R_{t_1,f}\ge y^c$ in the case of \eqref{75}. These
numbers can take $\ll_{\epsilon}y^{\epsilon}$ possible values by \eqref{987}
for every $\epsilon >0$. So we can fix also $G=G_{t_1,t_2,f}$ in the case of
\eqref{74} and $R=R_{t_1,f}$ in the case of \eqref{75}.

In the case of \eqref{74} we have $G|f^2-\left(t_1^2-4\right)\left
(t_2^2-4\right)$
and $E|f$. Assume that $E\ge y^e$ with some $e>0$. Then the
possible values of $f$ are $\ll_{\epsilon}y^{\epsilon}\max\left(v
y^{-e},1\right)$ for every
$\epsilon >0$, which would give \eqref{74}. Hence we can assume
that $E\ll_{\delta}y^{\delta}$ for any $\delta >0$. By the definition \eqref{222}
of $G$ it follows then that there is a divisor $G_0$ of $G$
such that $G_0\ll v$ but $G_0\gg y^{c_0}$ with some $c_0>0$. Since
we have $G_0|f^2-\left(t_1^2-4\right)\left(t_2^2-4\right)$, we get that the possible
values of $f$ are $\ll_{\epsilon}y^{\epsilon}vy^{-c_0/2}$ for every $
\epsilon >0$. This proves \eqref{74}.

In the case of \eqref{75} we have $R|t_1^2-4$, hence
the possible values of $t_1$ are $\ll_{\epsilon}y^{\epsilon}\max\left
(y^{\frac 12-c},1\right)$ for every
$\epsilon >0$. This proves \eqref{75}.
\end{proof}

\begin{lemma}\label{lem:8}Assume that \eqref{6,2} holds. For $1\le
i\le I_0$ and for
$0\le j_1,j_2\le J$ let us write $Z_{i,j_1,j_2}:=V_{i,j_1,j_2}\cup
W_{i,j_1,j_2}$. Let
$\alpha >0$, let the parameter $\delta$ used in Lemma \ref{lem:2} be
small enough in terms of $\alpha$, and assume
$\min\left(a_i,\sqrt {X}-a_i\right)\ge X^{\frac 12-\alpha}$.

Let $G_{t_1,t_2,f}$ and $R_{t_1,f}$ be as in Lemma \ref{lem:7,5}, and
let us write $M_{t_1,t_2,f}:=\max\left(G_{t_1,t_2,f},R_{t_1,f}\right
)$. Then there is a $\beta >0$ such that
\begin{equation}\sum_{\left(t_1,t_2,f\right)\in Z_{i,j_1,j_2},M_{
t_1,t_2,f}>X^{\alpha}}\left|h\left(t_1^2-4,t_2^2-4,f\right)S_0\Phi\left
(y_2\left(S_0,T_0,F\right)\right)\right|\label{103}\end{equation}
is $\ll\frac {d^{5/2}}{\sqrt {X}}X^{-\beta}$ and also
\begin{equation}\sum_{\left(t_1,t_2,f\right)\in U_{i,j_1,j_2},M_{
t_1,t_2,f}>X^{\alpha}}\left|h\left(t_1^2-4,t_2^2-4,f\right)S_0\Phi\left
(y_1\left(S_0,T_0,F\right)\right)\right|\label{104}\end{equation}
is $\ll\frac {d^{5/2}}{\sqrt {X}}X^{-\beta}$.
\end{lemma}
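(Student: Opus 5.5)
We prove Lemma \ref{lem:8} by the same trivial-counting scheme as the proofs of \eqref{16} and \eqref{36}, now using Lemma \ref{lem:7,5} to show that the exceptional triples are rare. We treat \eqref{103} and \eqref{104} in parallel. The only input from the sets $Z_{i,j_1,j_2}$ and $U_{i,j_1,j_2}$ is the following:
\begin{itemize}
\item the pointwise bound $S_0\Phi(y_j)\ll_\delta X^{5\delta} d^{3/2}/X^{3/2}$, namely \eqref{20} for $j=2$, and \eqref{92} together with $\sqrt X-a_i\ge X^{1/2-\alpha}$ for $j=1$;
\item for given $t_1,t_2$, the admissible $f$ lie in an interval of length $\ll_\delta dX^{2\delta}$, whose endpoints are of size $\asymp X$ (since $F\asymp 1$ and $t_i^2\asymp X$);
\item the upper bound $h(t_1^2-4,t_2^2-4,f)\ll_\epsilon X^{\epsilon} S(t_1^2-4,t_2^2-4,f^2)$ from \cite[Lemma 3.1]{Biro}, up to harmless factors.
\end{itemize}
First we reduce the sums. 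By \eqref{16} and \eqref{36} we may discard the triples with $S(t_1^2-4,t_2^2-4,f^2)>X^{\alpha'}$ for a small $\alpha'$, so that $h\ll X^{\alpha'+\epsilon}$ on what remains. By \eqref{15} we may also assume $|t_1-t_2|\ge X^{1/2-\alpha}$ in the case of \eqref{103}; this assumption is not strictly needed, since the bound \eqref{20} used below already holds in that range. The trivial count of what remains is
\[
\#\{t_1\}\cdot\#\{t_2\}\cdot\#\{f\}\cdot X^{\alpha'+\epsilon}\cdot\frac{d^{3/2}}{X^{3/2}}\ll \frac{d^{5/2}}{\sqrt X}X^{O(\delta)+\alpha'+\epsilon},
\]
so it suffices to show that the condition $M_{t_1,t_2,f}>X^{\alpha}$ saves a fixed power of $X$ in the counting.

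Next we split according to which of $G$, $R$ is large. Apply Lemma \ref{lem:7,5} with $y$ chosen so that $a_i\le t_1,t_2<a_{i+1}$ lies in $[\sqrt y,2\sqrt y]$, possibly after covering by $O(1)$ such ranges; this is allowed since $a_{i+1}\le \frac32 a_i$ and $a_i\ge X^{1/2-\alpha}$. Take $u,v$ to be the endpoints of the $f$-interval, so $v\asymp dX^{O(\delta)}\gg y^{a}$ and $u\asymp X$. We must check $u^2>16y^2$, i.e. $f>4t^2$ roughly. This is a point to verify: since $F>1$, we have $|f|>\sqrt{(t_1^2-4)(t_2^2-4)}$, which is of size $y$ but not necessarily $4y$. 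If the hypothesis fails literally, we instead use that the proof of Lemma \ref{lem:7,5} only needs $u+v\ll y^b$ (via \eqref{987}), and rerun it as is. With $c=\alpha/(\log y/\log X)$, in the case $G>X^\alpha$ we use \eqref{74}: for each fixed $t_1,t_2$ the number of bad $f$ is $\ll vy^{-d'}$. In the case $R>X^\alpha$ we use \eqref{75}: for each fixed $f$ (and $t_2$) the number of bad $t_1$ is $\ll y^{1/2-d'}$. If $f$ ranges over the union over $t_1,t_2$ of intervals of total extent $\ll dX^{O(\delta)}$, we interchange the order of summation, first fixing $f$ and $t_2$ and then counting $t_1$. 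Either way the count of triples gains a factor $X^{-d''}$ with $d''>0$ depending only on $\alpha$.

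Finally, we choose $\delta$ and $\alpha'$ small in terms of this $d''$, which gives both bounds with some $\beta>0$. We expect the main obstacle to be the bookkeeping in the second step, namely verifying the hypotheses of Lemma \ref{lem:7,5}, especially $u^2>16y^2$ and the uniformity of $v$, and arranging the interchange of summation in the $R$-case so that the $f$-range is independent of $t_1$. To handle the interchange, we would enlarge the $f$-range to a common interval of length $\ll dX^{O(\delta)}$ containing all admissible $f$ for $t_1,t_2$ in the block. This is possible because $B(S_0,T_0)$ and $A(S_0,T_0)$ vary by $O(1)$ on the block, and this may cost a factor of size $\sqrt X/\ldots$. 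If it does, we would subdivide $t_1$ into shorter blocks of length $X^{1/2-\alpha''}$, on each of which the $f$-range moves by only $\ll dX^{O(\delta)}$, and apply \eqref{75} on each short block. This requires a short-interval version of \eqref{75}, which the same proof (fixing $R$ and counting $t_1$ with $R\mid t_1^2-4$) gives directly.
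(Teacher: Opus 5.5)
\textbf{Overall.} Your route is the paper's. You reduce with Lemmas \ref{lem:3} and \ref{lem:4}, bound $S_0\Phi$ pointwise, show with Lemma \ref{lem:7,5} that $M_{t_1,t_2,f}>X^{\alpha}$ is rare, and bound $h$ by \cite[Lemma 3.1]{Biro}. For the pointwise step the paper cites Lemmas \ref{lem:5} and \ref{lem:6}, while you use \eqref{20} and \eqref{92}; these give bounds of the same size. Your extra bookkeeping, which the paper omits, is sound. From $u^2>16y^2$ only the positivity of $f^2-(t_1^2-4)(t_2^2-4)$ is used, and this follows from $F>1$. In the $R$-case, a short-interval form of \eqref{75} (at most $X^{\epsilon}(\ell/R+1)$ bad $t_1$ in an interval of length $\ell$) is a correct way to interchange the summations. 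One correction there: the $f$-window moves at speed $\asymp a_i$ in $t_1$, so your blocks must have length about $d/a_i\ge X^{1/6}$. Thus your $\alpha''$ is not small, and the saving is $X^{-\min(\alpha,1/6)+\epsilon}$.

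\textbf{The gap.} The step that does not close as written is the uniform pointwise bound $S_0\Phi\ll X^{5\delta}(d/X)^{3/2}$, combined with the uniform counts $\#t_1\,\#t_2\ll X$ and $\#f\ll dX^{2\delta}$. Under the hypotheses in force ($\min(a_i,\sqrt X-a_i)\ge X^{1/2-\alpha}$, $|t_1-t_2|\ge X^{1/2-\alpha}$, $\delta$ small in terms of $\alpha$), the available bounds are weaker:
\begin{itemize}
\item \eqref{92} only gives $X^{3\delta+\alpha}(d/X)^{3/2}$;
\item \eqref{6.55} gives $X^{4\delta+3\alpha/2}(d/X)^{3/2}$;
\item \eqref{6.57} gives $X^{4\delta+3\alpha}(d/X)^{3/2}$.
\end{itemize}
As printed, \eqref{20} hides the dependence on the thresholds $X^{1/2-\epsilon}$ under which it is derived from \eqref{6.55} and \eqref{6.57}. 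So the reduction to large $|t_1-t_2|$ is not optional. Your trivial count therefore carries a factor $X^{c\alpha}$ that no choice of $\delta,\alpha'$ removes. Lemma \ref{lem:7,5} saves at best about a factor $X^{\alpha}$: in the $R$-case this is exactly what \eqref{92} loses, and in the $G$-case it is less.

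\textbf{The repair.} Use the device you already used for $S$. Apply \eqref{14}, \eqref{34}, \eqref{15}, \eqref{16} and \eqref{36} with an auxiliary $\alpha'$ small in terms of $\alpha$, and take $\delta$ small in terms of $\alpha'$. Their proofs bound sums of absolute values, so they also dispose of the subsums with $M_{t_1,t_2,f}>X^{\alpha}$. After this every loss is $X^{O(\alpha'+\delta)+\epsilon}$, and your final choice of $\alpha',\delta$ in terms of $d''$ works. Alternatively, keep the correlated weights as in the proof of \eqref{14}: use $a_{i+1}-a_i\ll\sqrt X-a_i$ in the count for $U_{i,j_1,j_2}$, and the $|t_2-t_1|$-dependent count \eqref{6.53} for $Z_{i,j_1,j_2}$. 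The paper's two-line proof uses a single $\alpha$ throughout and does not make this parameter separation explicit either.
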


\begin{proof}By Lemmas \ref{lem:3} and \ref{lem:4} we
can see that it is enough to consider the parts where
$S\left(t_1^2-4,t_2^2-4,f^2\right)\le X^{\alpha}$, and in the case of $
Z_{i,j_1,j_2}$ it is
enough to consider that part where we have also
$\left|t_1-t_2\right|\ge X^{\frac 12-\alpha}$. If $\alpha$ is small enough, then by Lemmas
\ref{lem:5}, \ref{lem:6}, \ref{lem:7,5} and by \cite[Lemma 3.1]{Biro} we obtain the
present lemma.
\end{proof}

\subsection{Using the explicit formula for the class
numbers}

\begin{lemma}\label{lem:9}Let $t_1,t_2$ and $f$ be positive integers such that $
t_1,t_2>2$
and $f^2>\left(t_1^2-4\right)\left(t_2^2-4\right)$. Let $p$ be a given prime. If
\[\beta_p:=\nu_p\left(f^2-\left(t_1^2-4\right)\left(t_2^2-4\right
)\right)\]
 and the residues of $t_1,t_2$ and $f$ are given modulo
$p^{\beta_p+1+\nu_p\left(16\right)}$, then the Hilbert symbol
$\left(t_2^2-4,f^2-\left(t_1^2-4\right)\left(t_2^2-4\right)\right
)_p$ is also determined.
\end{lemma}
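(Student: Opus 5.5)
The plan is to use the standard description of the Hilbert symbol through unit parts and valuations, and to show that every quantity in that description is fixed by the given residues. Put $a:=t_2^2-4$ and $b:=f^2-(t_1^2-4)(t_2^2-4)$; by hypothesis $b>0$, so $b\neq 0$. Also $a\neq 0$ because $t_2>2$. Set $N:=\beta_p+1+\nu_p(16)$. Write $a=p^{\nu_p(a)}a'$ and $b=p^{\beta_p}b'$ with $a',b'$ units in $\mathbb{Z}_p$. For odd $p$ we have
\[(a,b)_p=(-1)^{\nu_p(a)\nu_p(b)\frac{p-1}{2}}\left(\frac{a'}{p}\right)^{\nu_p(b)}\left(\frac{b'}{p}\right)^{\nu_p(a)}.\]
For $p=2$ the analogous formula involves $\nu_2(a)$, $\nu_2(b)$, and the classes of $a'$, $b'$ modulo $8$. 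It therefore suffices to show that the residues of $t_1,t_2,f$ modulo $p^N$ determine $\nu_p(a)$, $\nu_p(b)$, and the units $a'$, $b'$ modulo $p$ (or modulo $8$ when $p=2$).

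\emph{Step 1: $b$.} Since $b$ is a polynomial in $t_1,t_2,f$ with integer coefficients, its residue modulo $p^N$ is determined. This gives $\nu_p(b)=\beta_p$, because $\beta_p<N$. Since $N-\beta_p=1+\nu_p(16)$, it also determines $b'$ modulo $p^{1+\nu_p(16)}$, that is, modulo $p$ for odd $p$ and modulo $32$ for $p=2$. This is more than enough.

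\emph{Step 2: $a$.} This is the main obstacle, since $\nu_p(a)$ could exceed $N$, in which case the residue of $a$ modulo $p^N$ would not determine it. I will handle it through the relation
\[f^2=b+(t_1^2-4)\,a.\]
Let $\alpha:=\nu_p(a)$. The case to worry about is $\alpha\ge N$, so $\alpha>\beta_p$. Then $\nu_p((t_1^2-4)a)>\beta_p$, which forces $\nu_p(f^2)=\beta_p$, so $\beta_p$ is even. Moreover $f^2/p^{\beta_p}\equiv b'$ modulo $p^{\alpha-\beta_p}$, so $b'$ is a square modulo $p^{1+\nu_p(16)}$. This suffices to make $b'$ a $p$-adic square by Hensel's lemma: modulo $p$ for odd $p$, and modulo $8$ (here we even have $32$) for $p=2$. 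Together with $\beta_p$ even, $b$ is a square in $\mathbb{Q}_p^{*}$, so $(a,b)_p=1$.

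This conclusion can be read off from the residues alone, because the condition $\nu_p(a)\ge N$ is itself visible from them: it holds exactly when $a\equiv 0\pmod{p^N}$. So the argument splits into two cases according to the residue of $a$ modulo $p^N$.
\begin{itemize}
\item If $a\equiv 0 \pmod{p^N}$, then $(a,b)_p=1$ by the argument above.
\item If $a\not\equiv 0\pmod{p^N}$, then $\nu_p(a)=\alpha<N$ is known. The unit $a'$ is then determined modulo $p^{N-\alpha}$, where $N-\alpha\ge 1+\nu_p(16)-(\alpha-\beta_p)$.
\end{itemize}

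The remaining subcase is $\beta_p<\alpha<N$, where $N-\alpha$ may fall below $3$ when $p=2$. There I will again use the identity. Since $\alpha>\beta_p$, the same reasoning shows $\beta_p$ is even and $b'$ is a square modulo $p^{\alpha-\beta_p}$. If I cannot determine $a'$ modulo $8$ directly here, I will instead compute the symbol through the identity $(a,b)_p=(a,b)_p(b,-b)_p=\ldots$, or more simply use $b\equiv f^2\pmod{(t_1^2-4)a}$. One route is to rewrite $(a,b)_p=(a,\,b\cdot f^{-2})_p$ when $p\nmid f$ and expand $bf^{-2}=1-(t_1^2-4)a f^{-2}$, using $(x,1-x)_p=1$. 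I expect this $p=2$ bookkeeping to be the only delicate point; the odd-$p$ case is immediate from Step 1 and the case split.
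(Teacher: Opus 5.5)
\textbf{Gap at $p=2$.} Your strategy is the paper's: use Serre's explicit formula and split according to $\alpha=\nu_p(t_2^2-4)$. When $\alpha$ is large, $f^2=b+(t_1^2-4)a$ forces $b$ to be a $p$-adic square, so the symbol is $1$. Otherwise the unit parts $a',b'$ are read off from the residues. Your odd-$p$ argument is complete.

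The problem is that you split at $N=\beta_2+5$. This leaves the subcases $\alpha=\beta_2+3$ and $\alpha=\beta_2+4$, where the residues only give $a'$ modulo $4$, resp.\ modulo $2$. The Hilbert symbol formula needs $a'$ modulo $8$, and you do not close these subcases. The routes you sketch do not close them either:
\begin{itemize}
\item The identity $(a,b)_2=(a,b)_2(b,-b)_2$ is vacuous.
\item Applying $(x,1-x)_2=1$ with $x=(t_1^2-4)af^{-2}$ only gives $(a,b)_2=(t_1^2-4,b)_2$, which replaces one unknown symbol by another.
\end{itemize}

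The missing step is already in your own text. In these subcases $\alpha-\beta_2\ge 3$, so $\nu_2(f)=\beta_2/2$ and $b'\equiv (f/2^{\beta_2/2})^2\equiv 1 \pmod 8$. Hence $b$ is a square in $\mathbb{Q}_2^{*}$ and $(a,b)_2=1$. Which subcase you are in is visible, since $a\not\equiv 0 \pmod{2^N}$ fixes $\alpha$.

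\textbf{Cleaner fix.} Put the threshold at $\beta_p+1+\nu_p(4)$ instead of at $N$; this condition is readable modulo $p^N$.
\begin{itemize}
\item If $\alpha\ge \beta_p+1+\nu_p(4)$, then $b'$ is a square modulo $p^{1+\nu_p(4)}$ (modulo $p$, resp.\ $8$), so the symbol is $1$.
\item Otherwise $N-\alpha\ge 1+\nu_p(16)-\nu_p(4)=1+\nu_p(4)$, so $a'$ is known modulo $p$, resp.\ modulo $8$, and the formula applies.
\end{itemize}
This is exactly the paper's proof, and it explains why the modulus has exponent $\beta_p+1+\nu_p(16)$ with $\nu_p(16)=2\nu_p(4)$.
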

\begin{proof}Let $\alpha_p:=\nu_p\left(t_2^2-4\right)$ and $t_2^2
-4=p^{\alpha_p}u$,
$f^2-\left(t_1^2-4\right)\left(t_2^2-4\right)=p^{\beta_p}v$. Then by \cite[Theorem 1 on
p 20]{Serre} we have that
\[\left(t_2^2-4,f^2-\left(t_1^2-4\right)\left(t_2^2-4\right)\right
)_p=\left(-1\right)^{\alpha_p\beta_p\frac {p-1}2}\left(\frac up\right
)^{\beta_p}\left(\frac vp\right)^{\alpha_p}\]
for $p>2$, and
\[\left(t_2^2-4,f^2-\left(t_1^2-4\right)\left(t_2^2-4\right)\right
)_2=\left(-1\right)^{\frac {u-1}2\frac {v-1}2+\alpha_p\frac {v^2-
1}8+\beta_p\frac {u^2-1}8}.\]
Since the residue of $t_2$ is given modulo $p^{\beta_p+1+\nu_p\left
(4\right)}$, the
fact whether $\alpha_p\ge\beta_p+1+\nu_p\left(4\right)$ or not is also
determined. In addition, in the case $\alpha_p<\beta_p+1+\nu_p\left
(4\right)$
the value of $\alpha_p$ is also determined.

Assume first that $\alpha_p\ge\beta_p+1+\nu_p\left(4\right)$. Then
\[f^2\equiv p^{\beta_p}v\left({\rm m}{\rm o}{\rm d}\mbox{\rm \ }p^{
\beta_p+1+\nu_p\left(4\right)}\right).\]
This implies that $\beta_p$ is even, and if $p>2$, then $\left(\frac
vp\right)=1$,
while if $p=2$, then $v\equiv 1\left({\rm m}{\rm o}{\rm d}\mbox{\rm \ }
8\right).$ Hence in this case
$\left(t_2^2-4,f^2-\left(t_1^2-4\right)\left(t_2^2-4\right)\right
)_p=1$, i.e. it is determined.

Assume now that $\alpha_p<\beta_p+1+\nu_p\left(4\right)$. We have seen that
then $\alpha_p$ is determined. Since the residue of
$t_2$ is given modulo $p^{\beta_p+1+\nu_p\left(16\right)}$, it is given also modulo
$p^{\alpha_p+1+\nu_p\left(4\right)}$. Hence $u$ is given modulo $
p^{1+\nu_p\left(4\right)}$. We
know that the residue of $f^2-\left(t_1^2-4\right)\left(t_2^2-4\right
)$ is given modulo
$p^{\beta_p+1+\nu_p\left(4\right)}$, therefore $v$ is also given modulo $
p^{1+\nu_p\left(4\right)}$.
The lemma follows.
\end{proof}

\begin{lemma}\label{lem:10}Let $t_1,t_2$ and $f$ be
integers such that $f^2\neq\left(t_1^2-4\right)\left(t_2^2-4\right
)$ and
$4|f^2-\left(t_1^2-4\right)\left(t_2^2-4\right)$. Let $E_{t_1,f}$ and $
G_{t_1,t_2,f}$ be as in
Lemma \ref{lem:7,5}.

If $p$ is a prime such that $\left(p,G_{t_1,t_2,f}\right)=1$, but
$p|f^2-\left(t_1^2-4\right)\left(t_2^2-4\right)$, then
\begin{equation}h_p\left(t_1^2-4,t_2^2-4,f\right)=\sum_{\beta =0}^{
\nu_p\left(f^2-\left(t_1^2-4\right)\left(t_2^2-4\right)\right)}\left
(\frac {t_1^2-4}{p^{\beta}}\right).\label{200}\end{equation}
Let $p$ be a given prime such that $p|G_{t_1,t_2,f}$. If $\beta_p
:=\nu_p\left(f^2-\left(t_1^2-4\right)\left(t_2^2-4\right)\right)$ and the residues of $
t_1,t_2$ and $f$ are given modulo
$p^{\beta_p+1+\nu_p\left(16\right)}$, then
\[h_p\left(t_1^2-4,t_2^2-4,f\right)\]
 is also determined.
\end{lemma}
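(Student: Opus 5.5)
The proof will go directly through the local factor formulas for $h_p(d_1,d_2,t)$ proved in the Introduction of \cite{biroClass}, applied with $d_1=t_1^2-4$, $d_2=t_2^2-4$, $t=f$. Write $\Delta:=f^2-d_1d_2\neq 0$ and $\beta_p:=\nu_p(\Delta)$. The local factor $h_p$ is given there by a case distinction. The case is decided by the relative sizes of $\nu_p(d_1)$, $\nu_p(d_2)$, $\nu_p(t)$ and $\beta_p$. Within a case, the value is expressed through Hilbert symbols and Legendre symbols of unit parts, together with $\nu_p$-valuations.

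\textbf{First assertion.} Let $p\nmid G_{t_1,t_2,f}$ with $p\mid\Delta$. By the definition \eqref{222} of $G_{t_1,t_2,f}$, this forces $p\nmid 2E_{t_1,f}$. So $p$ is odd and $p\nmid\gcd(t_1^2-4,f)$. If $p\mid d_1$, then $p\nmid f$; but then $p\mid\Delta$ would give $p\mid f^2$, a contradiction. Hence $p\nmid d_1$. Since $f^2\equiv d_1d_2 \pmod p$, the same argument gives $p\nmid d_2$ and $p\nmid f$.

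We are then in the "generic" case of the formula of \cite{biroClass}, where $p$ is odd and divides none of $d_1$, $d_2$, $t$. There the local factor should reduce to
\[
\sum_{\beta=0}^{\beta_p}\left(\frac{d_1}{p}\right)^{\beta}=\sum_{\beta=0}^{\beta_p}\left(\frac{d_1}{p^\beta}\right),
\]
which is \eqref{200}. The only work is to check that the Hilbert-symbol factors appearing in that case are trivial or cancel. They involve only units and $\Delta$. For the factor $(d_2,\Delta)_p$, use $d_2\equiv f^2/d_1\pmod p$: this makes it equal to $\left(\frac{d_2}{p}\right)^{\beta_p}=\left(\frac{d_1}{p}\right)^{\beta_p}$, which is what makes the two possible expressions agree.

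\textbf{Second assertion.} Let $p\mid G$, fix $\beta_p$, and fix the residues of $t_1,t_2,f$ modulo $p^{\beta_p+1+\nu_p(16)}$. The plan is to show that every ingredient of the formula for $h_p$ is determined by these residues.

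1. Each of $\nu_p(d_1)$, $\nu_p(d_2)$, $\nu_p(f)$ is either determined, or known to be at least $\beta_p+1+\nu_p(4)$. The formula of \cite{biroClass} depends on these valuations only through comparisons with $\beta_p$, or the valuations exceed $\beta_p$, in which case the case is already fixed. So the case is determined.

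2. The unit parts of $d_1$, $d_2$, $f$ are determined modulo $p^{1+\nu_p(4)}$. Hence their quadratic characters mod $p$, or mod $8$ when $p=2$, are determined. The unit part of $\Delta$ is determined in the same way, since $\Delta$ is known modulo $p^{\beta_p+1+\nu_p(4)}$.

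3. The Hilbert symbols entering the formula, in particular $(d_2,\Delta)_p$, are determined. This is exactly Lemma \ref{lem:9}; the other symbols are handled by the same explicit formulas from \cite{Serre}.

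\textbf{Main obstacle.} The delicate part is bookkeeping against the exact case structure of the formula in \cite{biroClass}, especially at $p=2$ and when a valuation of $d_i$ or $f$ is at least $\beta_p$. The key point to verify is that in those subcases the formula depends only on the fact that the valuation is large, and not on its exact value. The hypothesis $4\mid\Delta$ is needed so that the $p=2$ formula applies. If some subcase does use a valuation larger than $\beta_p$, I would rewrite it via the identity $f^2=d_1d_2+\Delta$. When $\nu_p(d_1d_2)>\beta_p$, this identity fixes $\nu_p(f)=\beta_p/2$ and forces the unit part of $\Delta$ to be a square, as in the proof of Lemma \ref{lem:9}. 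That reduces the formula to quantities already determined.
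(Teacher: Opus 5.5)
\textbf{Gap in the first assertion.} From $p\nmid 2E_{t_1,f}$ and $p\mid\Delta$ you correctly deduce that $p$ is odd and $p\nmid d_1$. But your next claim, that ``the same argument gives $p\nmid d_2$ and $p\nmid f$'', is false. The quantity $E_{t_1,f}=\gcd(t_1^2-4,f)$ involves only $d_1$ and $f$. With $p\nmid d_1$, the congruence $f^2\equiv d_1d_2\pmod p$ only yields $p\mid d_2\iff p\mid f$.

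A concrete counterexample: take $t_1=3$, $t_2=4$, $f=12$. Then $d_1=5$, $d_2=12$, $\Delta=84$, $E_{t_1,f}=1$ and $G_{t_1,t_2,f}=4$. The prime $p=3$ satisfies the hypotheses of the first assertion, yet it divides both $d_2$ and $f$.

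So your reduction to the ``generic'' case $p\nmid d_1d_2f$ misses a genuinely occurring subcase. Your Hilbert-symbol check, which uses $d_2\equiv f^2/d_1 \pmod p$ to get $(d_2,\Delta)_p=\left(\frac{d_2}{p}\right)^{\beta_p}$, also breaks down there, because $d_2$ is no longer a $p$-unit.

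The paper uses only $p\neq 2$ and $p\nmid t_1^2-4$, and reads \eqref{200} directly off the definition of $h_p$ in \cite{biroClass} for that situation. If you want to keep your Hilbert-symbol verification, make it uniform. The identity $d_1d_2\cdot 1^2+\Delta\cdot 1^2=f^2$ gives $(d_1d_2,\Delta)_p=1$. Hence $(d_2,\Delta)_p=(d_1,\Delta)_p=\left(\frac{d_1}{p}\right)^{\beta_p}$ for every odd $p\nmid d_1$, whether or not $p$ divides $d_2$.

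\textbf{Second assertion.} Your plan matches the paper's route: determine the case of the local formula and the relevant residues, and handle the Hilbert symbol via Lemma \ref{lem:9}. The paper makes your ``large valuation'' bookkeeping clean with one observation. Set $m_p:=\min(\nu_p(d_1),\nu_p(d_2),\nu_p(f))$. Since $p^{2m_p}$ divides both $f^2$ and $d_1d_2$, it divides $\Delta$, so $m_p\le\beta_p/2$. Thus $m_p$ is always determined by the given residues. For odd $p$, beyond Lemma \ref{lem:9}, one then only needs $d_1,d_2$ modulo $p^{m_p+1}$, which fixes $\delta_p$ modulo $p$.
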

\begin{proof}
Assume first that $\left(p,G_{t_1,t_2,f}\right)=1$ and
$p|f^2-\left(t_1^2-4\right)\left(t_2^2-4\right)$. We cannot have $
p|t_1^2-4$, because
this would imply $p|f$, hence $p|E_{t_1,f}$, which is a
contradiction. It is also clear that $p\ne 2$. Then we get
\eqref{200} by the definition of $h_p\left(t_1^2-4,t_2^2-4,f\right
)$
given on \cite[pp 898-899]{biroClass}.

Assume now that $p|G_{t_1,t_2,f}$. It is clear that
\begin{equation}m_p:=\min\left(\nu_p\left(t_1^2-4\right),\nu_p\left
(t_2^2-4\right),f\right)\le\frac {\beta_p}2.\label{22}\end{equation}
Hence the residues of $t_1,t_2$ and $f$ modulo $p^{\beta_p+1+\nu_
p\left(16\right)}$
determine the value of $m_p$.

Consider first the case $p\ne 2$. Then using Lemma
\ref{lem:9} and the definition of $h_p\left(t_1^2-4,t_2^2-4,f\right
)$
given on \cite[pp 898-899]{biroClass} we
see that in order to show that $h_p\left(t_1^2-4,t_2^2-4,f\right)$ is
determined it is enough to see that $\delta_p$ (defined there) is
determined modulo $p$. For this sake it is enough to
show that $t_1^2-4$ and $t_2^2-4$ are determined modulo $p^{m_p+1}
.$
But this is true, since $t_1$ and $t_2$ are determined modulo
$p^{\beta_p+1}$ and $m_p\le\beta_p$.

If $p=2$, then using Lemma \ref{lem:9} and the definition of $h_2\left
(t_1^2-4,t_2^2-4,f\right)$
given on \cite[p 899]{biroClass} we see
that it is enough to show that $A$ and $\epsilon$ (defined there)
are determined. This can be proved by a straightforward
but tedious reasoning, considering many cases separately.
We do not give the details, we just mention the
following two useful facts. On the one hand, it is clear by the definitions that
\[A\le\min\left(\nu_2\left(t_1^2-4\right),\nu_2\left(t_2^2-4\right
),f\right)\]
in every case, hence $A\le\frac {\beta_2}2$ by \eqref{22}. On the other
hand, in the case of \cite[(1.3)]{biroClass} we
must have $2\nu_2\left(t_1^2-4\right)\le\beta_2$, and in the case of
\cite[(1.4)]{biroClass} we must have
\[2\min\left(\nu_2\left(t_1^2-4\right),\nu_2\left(t_2^2-4\right)\right
)+2\le\beta_2.\]
The lemma is proved.
\end{proof}

Recall $A_X\left(t_1,t_2\right)$ and $B_X\left(t_1,t_2\right)$ from \eqref{79,1} and
\eqref{79,2}, respectively.

\begin{lemma}\label{lem:11}Let $\alpha >0$ and let $\delta >0$ be small enough in terms of $
\alpha$. Then there is a $\beta >0$ such that the following
statement is true.

If \eqref{201} holds and the positive integer $J$ is large enough in terms of $\alpha$
and $\delta$, then we have for every $1\le\tau\le 2$ that
\begin{equation}I_{d,X,J}\left(\tau\right)\ll_{\alpha ,\delta ,J}\frac {
d^{5/2}}{\sqrt {X}}X^{-\beta}+\Sigma_1+\Sigma_2,\label{202}\end{equation}
where $\Sigma_1$ and $\Sigma_2$ are defined as follows.

We define $\Sigma_1$ as
\[\sum_{1\le G,R\le X^{\alpha},4|G,p|2R\Leftrightarrow p|G}\sum_{\left
(A,B,C\right)\in H_{G,R}}\left(\prod_{p|G}h_p\left(A^2-4,B^2-4,C\right
)\right)S_{G,R,A,B,C}\]
with the notation
\[S_{G,R,A,B,C}:=\sum_{i\in\mathcal I}\sum_{j_1=0}^J\sum_{j_2=0}^
Ja_{j_1,j_2}X_{G,R,A,B,C}\left(i,j_1,j_2\right),\]
where $X_{G,R,A,B,C}\left(i,j_1,j_2\right)$ denotes
\[\sum_{\left(t_1,t_2,f\right)\in U_{i,j_1,j_2}\left(G,R,A,B,C\right
)}\alpha_G\left(t_1,t_2,f\right)A_X\left(t_1,t_2\right)\left(B\left
(S_0,T_0\right)-F\right)^{3/2},\]
and
\[\alpha_G\left(t_1,t_2,f\right):=\sum_{D|f^2-\left(t_1^2-4\right
)\left(t_2^2-4\right),\gcd\left(D,G\right)=1}\left(\frac {t_1^2-4}
D\right).\]
We define $\Sigma_2$ as
\[\sum_{1\le G,R\le X^{\alpha},4|G,p|2R\Leftrightarrow p|G}\sum_{\left
(A,B,C\right)\in H_{G,R}}\left(\prod_{p|G}h_p\left(A^2-4,B^2-4,C\right
)\right)T_{G,R,A,B,C}\]
with the notation
\[T_{G,R,A,B,C}:=\sum_{i\in\mathcal I}\sum_{j_1=0}^J\sum_{j_2=0}^
Ja_{j_1,j_2}Y_{G,R,A,B,C}\left(i,j_1,j_2\right),\]
where $Y_{G,R,A,B,C}\left(i,j_1,j_2\right)$ denotes
\[\sum_{\left(t_1,t_2,f\right)\in Z_{i,j_1,j_2}\left(G,R,A,B,C\right
)}\alpha_G\left(t_1,t_2,f\right)C_X\left(t_1,t_2\right)\left(A\left
(S_0,T_0\right)-F\right)^{3/2}\]
and
\begin{equation}C_X\left(t_1,t_2\right):=\operatorname{sgn}(t_1-t_
2)B_X\left(t_1,t_2\right)\psi_0\left(\frac {\left|t_1-t_2\right|}{
X^{\frac 12-\alpha}}\right).\label{678}\end{equation}
Here we use the following notations:
\[\mathcal I:=\left\{1\le i\le I:\left(a_i,\sqrt {X}-a_i\right)\ge
X^{\frac 12-\alpha}\right\},\]
\begin{equation}\gamma\left(G,R\right):=16RG\prod_{p|G}p,\label{679}\end{equation}
the set $H_{G,R}$ consists of the integer triples $\left(A,B,C\right
)$
satisfying that $0\le A,B,C<\gamma\left(G,R\right)$,
\begin{equation}4|C^2-\left(A^2-4\right)\left(B^2-4\right)\label{91}\end{equation}
and
\begin{equation}\prod_{p|G}p^{\nu_p\left(C^2-\left(A^2-4\right)\left
(B^2-4\right)\right)}=G,\;\prod_{p|G}p^{\nu_p\left(A^2-4\right)}=
R,\label{680}\end{equation}
the set $U_{i,j_1,j_2}\left(G,R,A,B,C\right)$ consists of the integer triples $\left
(t_1,t_2,f\right)$ satisfying the following
conditions:
\begin{equation}B\left(S_0\left(j_1,t_1\right),T_0\left(0,t_2\right
)\right)-\frac d{a_i^2}X^{\delta}<F\le B\left(S_0\left(j_1,t_1\right
),T_0\left(j_2,t_2\right)\right),\label{7,90}\end{equation}
\begin{equation}F>1,\label{80}\end{equation}
\begin{equation}a_i\le t_1,t_2\le a_{i+1}-1,\label{900}\end{equation}
\begin{equation}t_1\equiv A\left({\rm m}{\rm o}{\rm d}\,\gamma\left
(G,R\right)\right),t_2\equiv B\left({\rm m}{\rm o}{\rm d}\,\gamma\left
(G,R\right)\right),f\equiv C\left({\rm m}{\rm o}{\rm d}\,\gamma\left
(G,R\right)\right),\label{95}\end{equation}
\begin{equation}p|2\gcd\left(t_1^2-4,f\right)\Leftrightarrow p|G,\label{96}\end{equation}
finally $Z_{i,j_1,j_2}\left(G,R,A,B,C\right)$ is the set of integer triples
$\left(t_1,t_2,f\right)$ satisfying \eqref{80}, \eqref{900}, \eqref{95},
\eqref{96} and the following:
\begin{equation}A\left(S_0\left(j_1,t_1\right),T_0\left(0,t_2\right
)\right)-\frac {d\left|t_2-t_1\right|}{a_i\left(X-a_i^2\right)}X^{
\delta}<F\le A\left(S_0\left(j_1,t_1\right),T_0\left(j_2,t_2\right
)\right).\label{110}\end{equation}
\end{lemma}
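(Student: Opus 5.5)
We chain the reductions already proved. First apply Lemma \ref{lem:2}: $I_{d,X,J}(\tau)$ is bounded by \eqref{6,3}, \eqref{6,4}, \eqref{6,5} and \eqref{7}. Under \eqref{201}, with $\delta$ small, each of $X^{\epsilon}d^3/X$, $X^{\epsilon}d^2/X^{1/4}$ and $X^{\epsilon}\sqrt{X}d$ is $\ll \frac{d^{5/2}}{\sqrt X}X^{-\beta}$. Indeed $d^3/X\le d^{5/2}X^{-1/2}$ iff $d\le X$; $d^2X^{-1/4}\le d^{5/2}X^{-1/2}$ iff $d\ge X^{1/2}$; and $\sqrt X d\le d^{5/2}X^{-1/2}$ iff $d\ge X^{2/3}$. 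The margins $d\le X^{99/100}$ and $d\ge X^{2/3+1/100}$ give a power saving in each case.

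Next, combine \eqref{6,4} and \eqref{6,5} into the signed sums $S_{i,j_1,j_2}$ of Lemma \ref{lem:3}. On $V$ we have $t_1>t_2$ and on $W$ we have $t_2>t_1$, which produces the factor $\operatorname{sgn}(t_1-t_2)$. By \eqref{14} and \eqref{34}, the indices $i\notin\mathcal I$ contribute admissibly. For $i\in\mathcal I$, \eqref{15} lets us discard $|t_1-t_2|<X^{1/2-\alpha}$ in the $Z$-sums. Since $\psi_0$ equals $1$ above $X^{1/2-\alpha}$ and vanishes below $X^{1/2-\alpha}/2$, inserting $\psi_0(|t_1-t_2|/X^{1/2-\alpha})$ changes only a range already covered by \eqref{15}. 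On the remaining range, \eqref{79} and \eqref{79,15} hold with $\delta$ replaced by $\alpha$, possibly after shrinking $\alpha$ harmlessly.

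We then replace $S_0\Phi(y_j)$ by the main terms of Lemmas \ref{lem:5} and \ref{lem:6}. The hypotheses \eqref{78,78} and \eqref{78,780} hold because the windows in \eqref{7,9} and \eqref{11} have width $\frac d{a_i^2}X^{\delta}$, respectively $\frac{d|t_2-t_1|}{a_i(X-a_i^2)}X^\delta$. On our range both widths are $\ll (d/X)X^{O(\alpha+\delta)}\le X^{-1/1000}$, using $d\le X^{99/100}$. The error $O(\frac{d^{3/2}}{X^{3/2}}X^{-\beta_0})$ is multiplied by the number of triples, which is $\ll X\cdot dX^{O(\delta)}$, and by $h\ll X^{\epsilon}S(\cdot)$ from \cite[Lemma 3.1]{Biro}. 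Lemma \ref{lem:1} controls the $S$-factors on average, so the total is $\frac{d^{5/2}}{\sqrt X}X^{-\beta_0+O(\alpha+\delta)}$, which is admissible for $\alpha,\delta$ small.

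The arithmetic step comes next. By Lemma \ref{lem:8} we may drop triples with $M_{t_1,t_2,f}>X^{\alpha}$, so $G:=G_{t_1,t_2,f}\le X^\alpha$ and $R:=R_{t_1,f}\le X^\alpha$. We use the explicit formula of \cite{biroClass}, $h(t_1^2-4,t_2^2-4,f)=\prod_p h_p$ (up to whatever normalizing factor that formula carries). Note that $4|f^2-(t_1^2-4)(t_2^2-4)$ always holds, so $4|G$, and by definition $p|G\Leftrightarrow p|2E_{t_1,f}\Leftrightarrow p|2R$. For primes $p\nmid G$, Lemma \ref{lem:10} gives $h_p$ by \eqref{200}, and $h_p=1$ when $p$ does not divide the difference. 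Multiplicativity of the Jacobi symbol then turns the product over such $p$ into $\alpha_G(t_1,t_2,f)$. For $p|G$, Lemma \ref{lem:10} says $h_p$ is determined by $t_1,t_2,f$ modulo $p^{\beta_p+1+\nu_p(16)}$, and this modulus divides $\gamma(G,R)$ of \eqref{679}.

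It remains to sort triples into residue classes $(A,B,C)$ modulo $\gamma(G,R)$. One checks that \eqref{680} and \eqref{96} are determined by these residues, since the relevant exponents are at most $\nu_p(G)$ or $\nu_p(R)$ and both are below the modulus. This yields exactly $\Sigma_1$ and $\Sigma_2$.

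I expect the main obstacle to be this last bookkeeping step: verifying that the conditions $G_{t_1,t_2,f}=G$ and $R_{t_1,f}=R$ are faithfully encoded by residues modulo $\gamma(G,R)$, which is why the extra factors $R$ and $\prod_{p|G}p$ appear, and handling $p=2$ through \eqref{91} and the explicit shape of $h_2$.
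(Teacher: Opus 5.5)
Your outline follows the paper's own route: Lemma \ref{lem:2} and the bound for \eqref{7}; Lemmas \ref{lem:3} and \ref{lem:4} to remove $i\notin\mathcal I$ and $|t_1-t_2|<X^{\frac12-\alpha}$; Lemmas \ref{lem:5} and \ref{lem:6} for the main terms; Lemma \ref{lem:8} to force $G,R\le X^{\alpha}$; then Lemma \ref{lem:10} with the explicit formula of \cite{biroClass}. The analytic steps are fine. The problem is in the bookkeeping step you yourself single out: your claim that \eqref{96} is determined by the residues of $(t_1,t_2,f)$ modulo $\gamma(G,R)$ is false. Only half of it is. For $p\mid G$, the congruences \eqref{95}, together with \eqref{680} and $p\mid 2R\Leftrightarrow p\mid G$, force $p\mid 2\gcd(t_1^2-4,f)$. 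The other half says that no prime $p\nmid G$ divides $\gcd(t_1^2-4,f)$, and $\gamma(G,R)$ carries no information about such primes. For instance, with $t_1=5$, $t_2=4$ the values $f=260$ and $f=516$ are congruent modulo $\gamma(4,1)=128$, and both reduce to $(A,B,C)=(5,4,4)\in H_{4,1}$. But $\gcd(21,260)=1$ while $3\mid\gcd(21,516)$, so $G_{5,4,260}=4$ whereas $G_{5,4,516}=324$. So the residue classes alone do not encode $G_{t_1,t_2,f}=G$, $R_{t_1,f}=R$. Without \eqref{96}, a triple would be counted again under every $(G,R)$ obtained by discarding odd primes of $E_{t_1,f}$, and for those the weight $\alpha_G\prod_{p\mid G}h_p$ is not $h(t_1^2-4,t_2^2-4,f)$.

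The correct statement, which is what the paper uses, is this: for a triple with $4\mid f^2-(t_1^2-4)(t_2^2-4)>0$, one has $G_{t_1,t_2,f}=G$ and $R_{t_1,f}=R$ if and only if \eqref{96} holds and \eqref{95} holds for some $(A,B,C)\in H_{G,R}$. This is why \eqref{96} is kept as a separate, non-congruence condition in $U_{i,j_1,j_2}(G,R,A,B,C)$ and $Z_{i,j_1,j_2}(G,R,A,B,C)$. It is also why it has to be removed later by the M\"obius sum over $k\mid\gcd(t_1^2-4,f)$ in Lemma \ref{lem:11,94}. Since the sets in the statement already contain \eqref{96}, your conclusion survives once the justification is replaced by this equivalence.

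There are two smaller slips. First, $4\mid f^2-(t_1^2-4)(t_2^2-4)$ does not hold for all triples, only when $f\equiv t_1t_2\pmod 2$. This is automatic when $h(t_1^2-4,t_2^2-4,f)\neq 0$, since then $f\equiv B_1B_2$. The other triples contribute nothing, and they are excluded from $\Sigma_1,\Sigma_2$ by \eqref{91}. Second, when you bound the error term of Lemma \ref{lem:5}, the diagonal $t_1=t_2$ of the $U$-sums is not covered by Lemma \ref{lem:1}. It needs \cite[(3.22)]{Biro}, as in the proof of Lemma \ref{lem:4}, though its contribution is harmless.
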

\begin{proof}
This is a more or less straightforward consequence of
Lemmas \ref{lem:2}, \ref{lem:3}, \ref{lem:4}, \ref{lem:5},
\ref{lem:6}, \ref{lem:8}, \ref{lem:10} and \cite[Theorem 1.1]{biroClass}. We just make a few
remarks.

Under the condition \eqref{201} the term \eqref{7} can be
estimated by $\frac {d^{5/2}}{\sqrt {X}}X^{-\beta}$.

If the positive integers $G$ and $R$ are given such that $4|G$
and $p|2R\Leftrightarrow p|G$, then for a triple $\left(t_1,t_2,f\right
)$ satisfying
$4|f^2-\left(t_1^2-4\right)\left(t_2^2-4\right)>0$ we have that $
G_{t_1,t_2,f}=G$ and
$R_{t_1,f}=R$ hold if and only if \eqref{96}  is true and
\eqref{95} holds with some $\left(A,B,C\right)\in H_{G,R}$.

The lemma is proved.
\end{proof}

\begin{lemma}\label{lem:11,9}Assume \eqref{201}. Let us use the notations of Lemma \ref{lem:11}.
Assume that there is a $\beta_0>0$ and a small enough $\beta_1>0$ such that for every small enough
$\alpha >0$, for every $\delta >0$ which is small enough in terms of
$\alpha$, for every positive integer $J$ which is large enough
in terms of $\alpha$ and $\delta$, for every integers $R$ and $G$ satisfying that
$1\le G,R\le X^{\alpha},$ $4|G$ and $p|2R\Leftrightarrow p|G$, for every
$\left(A,B,C\right)\in H_{G,R}$ and for every $i\in\mathcal I$ we have that
\begin{equation}\sum_{j_1=0}^J\sum_{j_2=0}^Ja_{j_1,j_2}X_{G,R,A,B
,C}^{\ast}\left(i,j_1,j_2\right)\ll_{\alpha ,J}\frac {d^{5/2}}{\sqrt {
X}}X^{-\beta_0}\label{601}\end{equation}
and
\begin{equation}\sum_{j_1=0}^J\sum_{j_2=0}^Ja_{j_1,j_2}Y_{G,R,A,B
,C}^{\ast}\left(i,j_1,j_2\right)\ll_{\alpha ,J}\frac {d^{5/2}}{\sqrt {
X}}X^{-\beta_0},\label{602}\end{equation}
where $X_{G,R,A,B,C}^{\ast}\left(i,j_1,j_2\right)$ denotes
\[\sum_{\left(t_1,t_2,f\right)\in U^{\ast}\left(G,R,A,B,C\right)}
\alpha_G\left(t_1,t_2,f\right)A_{X,i,j_1,j_2}^{\ast}\left(t_1,t_2
,f\right)\left(B\left(S_0,T_0\right)-F\right)^{3/2}\]
and $Y_{G,R,A,B,C}^{\ast}\left(i,j_1,j_2\right)$ denotes
\[\sum_{\left(t_1,t_2,f\right)\in U^{\ast}\left(G,R,A,B,C\right)}
\alpha_G\left(t_1,t_2,f\right)C_{X,i.j_1,j_2}^{\ast}\left(t_1,t_2
,f\right)\left(A\left(S_0,T_0\right)-F\right)^{3/2},\]
$U^{\ast}\left(G,R,A,B,C\right)$ is the set of integer triples $\left
(t_1,t_2,f\right)$
satisfying \eqref{95} and \eqref{96}, and
\begin{equation}A_{X,i,j_1,j_2}^{\ast}\left(t_1,t_2,f\right):=A_X\left
(t_1,t_2\right)B_{X,i,j_1,j_2}\left(t_1,t_2,f\right),\label{602,1}\end{equation}
\begin{equation}C_{X,i.j_1,j_2}^{\ast}\left(t_1,t_2,f\right):=C_X\left
(t_1,t_2\right)D_{X,i,j_1,j_2}\left(t_1,t_2,f\right),\label{602,2}\end{equation}
where $B_{X,i,j_1,j_2}\left(t_1,t_2,f\right)$ denotes the product of
\begin{equation}\prod_{k=1}^2f_{\beta_1}\left(\frac {t_k-a_i}{a_{
i+1}-a_i}\right),\label{602,3}\end{equation}
\begin{equation}\psi_0\left(X^{\beta_1}\frac {\sqrt {t_1^2-4}\sqrt {
t_2^2-4}B\left(S_0\left(j_1,t_1\right),T_0\left(j_2,t_2\right)\right
)-\left|f\right|}d\right)\label{602,31}\end{equation}
and
\begin{equation}1-\psi_0\left(\frac {\sqrt {t_1^2-4}\sqrt {t_2^2-
4}\left(B\left(S_0\left(j_1,t_1\right),T_0\left(0,t_2\right)\right
)-\frac d{a_i^2}X^{\delta}\right)-\left|f\right|}d\right),\label{602,32}\end{equation}
while $D_{X,i,j_1,j_2}\left(t_1,t_2,f\right)$ denotes the product of \eqref{602,3},
\begin{equation}\psi_0\left(X^{\beta_1}\frac {\sqrt {t_1^2-4}\sqrt {
t_2^2-4}A\left(S_0\left(j_1,t_1\right),T_0\left(j_2,t_2\right)\right
)-\left|f\right|}d\right)\label{602,33}\end{equation}
and
\begin{equation}1-\psi_0\left(\frac {\sqrt {t_1^2-4}\sqrt {t_2^2-
4}\left(A\left(S_0\left(j_1,t_1\right),T_0\left(0,t_2\right)\right
)-\frac {d\left|t_2-t_1\right|}{a_i\left(X-a_i^2\right)}X^{\delta}\right
)-\left|f\right|}d\right)\label{602,34}\end{equation}
Then the left-hand side of \eqref{202} is
\begin{equation}\ll\frac {d^{5/2}}{\sqrt {X}}X^{-\beta}\label{603}\end{equation}
with a positive $\beta .$
\end{lemma}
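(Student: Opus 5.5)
We start from Lemma \ref{lem:11}, which bounds the left-hand side of \eqref{202} by $\frac{d^{5/2}}{\sqrt X}X^{-\beta}+\Sigma_1+\Sigma_2$. It therefore suffices to show $\Sigma_1,\Sigma_2\ll \frac{d^{5/2}}{\sqrt X}X^{-\beta}$. The plan is to replace each sharp-cutoff sum $X_{G,R,A,B,C}(i,j_1,j_2)$, $Y_{G,R,A,B,C}(i,j_1,j_2)$ by its smoothed counterpart $X^{\ast}$, $Y^{\ast}$. We then sum the hypotheses \eqref{601}, \eqref{602} trivially over the outer parameters.

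First we count those parameters. There are $\ll X^{2\alpha}$ pairs $(G,R)$ and $\ll\gamma(G,R)^3\ll X^{O(\alpha)}$ triples $(A,B,C)\in H_{G,R}$, and $|\mathcal I|\ll\log X$. The local factors satisfy $\prod_{p|G}h_p\ll X^{O(\alpha)}$ by the upper bound \cite[Lemma 3.1]{Biro} (or directly from the definition in \cite{biroClass}). Hence if $\alpha$ is small compared with $\beta_0$, the loss $X^{O(\alpha)}$ is absorbed. This step is routine bookkeeping.

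The core is to show that for each fixed $G,R,A,B,C,i,j_1,j_2$
\[
X_{G,R,A,B,C}(i,j_1,j_2)-X^{\ast}_{G,R,A,B,C}(i,j_1,j_2)\ll \frac{d^{5/2}}{\sqrt X}X^{-\beta'}\,X^{-C\alpha},
\]
and similarly for $Y$. Multiplying by $\sqrt{t_1^2-4}\sqrt{t_2^2-4}$ turns the conditions \eqref{7,90}, \eqref{110} into conditions on $|f|$, so it suffices to compare the indicator functions with \eqref{602,31}--\eqref{602,34} and \eqref{602,3}. There are three kinds of discrepancy.

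\begin{itemize}
\item[(a)] Near the upper endpoint $F=B(S_0,T_0)$ (resp. $A$) the indicator differs from \eqref{602,31} (resp. \eqref{602,33}) only on an $f$-interval of length $\ll dX^{-\beta_1}$. On that interval $(B-F)^{3/2}$, resp. $(A-F)^{3/2}$, is tiny in the normalized scale. Lemma \ref{lem:5}/\ref{lem:6} gives the size $\frac{d^{3/2}}{X^{3/2}}$ times a power of $X^{-\beta_1}$, so this piece gains a power $X^{-\frac52\beta_1}$ over the trivial bound.
\item[(b)] Near the lower endpoint the factors \eqref{602,32}, \eqref{602,34} differ from the indicator on an interval of length $\ll d$. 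The original cutoff width is $\frac{d}{a_i^2}X^{\delta}\sqrt{t_1^2-4}\sqrt{t_2^2-4}\asymp dX^{\delta}$, so this piece is not small on its own. It is, however, independent of $j_2$. For fixed $j_1$ it reduces, via the alternating sum over $j_2$ with weights $a_{j_1,j_2}$, to a $J$-th difference of a smooth function of $j_2d\tau$. This is exactly the cancellation argument of \cite[Subsections 6.7, 6.8]{Biro}, which shows this region is negligible for $J$ large; we would reuse that argument with the smooth cutoff in place of the sharp one.
\item[(c)] The factor \eqref{602,3} differs from the conditions \eqref{900} only for $t_k$ within $X^{-\beta_1}(a_{i+1}-a_i)$ of the endpoints, a proportion $X^{-\beta_1}$ of the $t$-range.
\end{itemize}

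In all three cases the remaining sums are estimated in absolute value. We use $|\alpha_G|\le\tau(f^2-(t_1^2-4)(t_2^2-4))\ll X^{\epsilon}$, together with the trivial estimates from the proofs of Lemmas \ref{lem:3}, \ref{lem:4} and Lemma \ref{lem:1}. These give $\frac{d^{5/2}}{\sqrt X}X^{\epsilon}$ times the saved factor $X^{-c\beta_1}$. The congruence conditions \eqref{95}, \eqref{96} cost nothing, since we just drop them when bounding.

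The main obstacle is step (b): making the $j_2$-cancellation rigorous for the smoothed lower cutoff while the summand also carries the arithmetic weight $\alpha_G$. We would handle it as in \cite{Biro}: expand $(B(S_0,T_0(j_2,t_2))-F)^{3/2}$ as a smooth function of $j_2d\tau$ on the region where the upper cutoff is inactive, and show that its $J$-th difference is $\ll (d/X)^{J\cdot c}$ relative to the main size. That gives an arbitrary power saving, uniform in $f$, before the absolute-value summation. Choosing $\beta_1$ small and then $\alpha$ small in terms of $\beta_0,\beta_1$ completes the proof of \eqref{603}.
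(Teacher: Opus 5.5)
Your proposal is correct and follows essentially the paper's own argument. You absorb the $X^{O(\alpha)}$ count of $(G,R,A,B,C,i)$ and of $\prod_{p|G}h_p\le G^{O(1)}$, treat the smoothing of the $t$-range and of the upper cutoff by trivial bounds on a thin region, and treat the lower cutoff by the $J$-th difference cancellation in $j_2$, which is exactly \eqref{701}--\eqref{702}. As your step (b) shows, that comparison works only after summing over $j_2$, not for each fixed $j_2$ as your opening claim says.

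The one check you skip is that the condition $F>1$ of \eqref{80}, which is absent from $U^{\ast}$, holds automatically on the support of the smooth factors. For $B$ this follows from $B(S,T)\ge 1+2ST$. For $A$ it needs \eqref{741}, which is available because of the factor $\psi_0\left(|t_1-t_2|/X^{\frac 12-\alpha}\right)$ in \eqref{678}.
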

\begin{proof}Assume first that there is a positive number
$\beta_0>0$ such that for every small enough $\alpha >0$, for every $
\delta >0$ which is small enough in terms of
$\alpha$ and for every positive integer $J$ which is large enough
in terms of $\alpha$ and $\delta$, we have for every integers $R$ and $
G$ satisfying that
$1\le G,R\le X^{\alpha},$ $4|G$ and $p|2R\Leftrightarrow p|G$, for every
$\left(A,B,C\right)\in H_{G,R}$ and for every $i\in\mathcal I$ that
\begin{equation}\sum_{j_1=0}^J\sum_{j_2=0}^Ja_{j_1,j_2}X_{G,R,A,B
,C}\left(i,j_1,j_2\right)\ll_{\alpha ,\delta ,J}\frac {d^{5/2}}{\sqrt {
X}}X^{-\beta_0}\label{604}\end{equation}
and
\begin{equation}\sum_{j_1=0}^J\sum_{j_2=0}^Ja_{j_1,j_2}Y_{G,R,A,B
,C}\left(i,j_1,j_2\right)\ll_{\alpha ,\delta ,J}\frac {d^{5/2}}{\sqrt {
X}}X^{-\beta_0}.\label{605}\end{equation}
Then \eqref{603} with some positive $\beta$ follows easily from Lemma \ref{lem:11},
fixing $\alpha$ to be small enough there. Indeed, the number of
$G,R$, $\left(A,B,C\right)\in H_{G,R}$ and $i\in\mathcal I$ is smaller than
any fixed power of $X$ if $\alpha$ is small enough. In addition,
it is easy to show by trivial estimates using the
definitions on \cite[pp 898-899]{biroClass} that if $d_1$, $d_2$ and $t$ are integers such that $t^2-d_1d_2\neq
0$,
then for any prime $p$ we have that
$h_p$$\left(d_1,d_2,t\right)\le p^{C\nu_p\left(t^2-d_1d_2\right)}$ with an absolute constant $
C$.
Using this estimate and the first relation of \eqref{680}
we obtain indeed that \eqref{603} with some positive $\beta$
follows if we assume \eqref{604} and \eqref{605} under
the conditions listed above.

Hence it is enough to show that for every $\beta_1>0$ there is
a $\beta_2>0$ such that we have for every small enough $\alpha >0$, for every $
\delta >0$ which is small enough in terms of
$\alpha$ and for every positive integer $J$ which is large enough
in terms of $\alpha$ and $\delta$ that for every $R,G,A,B,C$ and $
i$ with
the properties listed in the lemma we have
\[\sum_{j_1=0}^J\sum_{j_2=0}^Ja_{j_1,j_2}\left(X_{G,R,A,B,C}\left
(i,j_1,j_2\right)-X_{G,R,A,B,C}^{\ast}\left(i,j_1,j_2\right)\right
)\ll_{\alpha ,\delta ,J}\frac {d^{5/2}}{\sqrt {X}X^{\beta_2}}\]
and
\[\sum_{j_1=0}^J\sum_{j_2=0}^Ja_{j_1,j_2}\left(Y_{G,R,A,B,C}\left
(i,j_1,j_2\right)-Y_{G,R,A,B,C}^{\ast}\left(i,j_1,j_2\right)\right
)\ll_{\alpha ,\delta ,J}\frac {d^{5/2}}{\sqrt {X}X^{\beta_2}}.\]
To see this we note on the one hand that inserting the
factor \eqref{602,3} instead of the condition \eqref{900} we make an admissible error. Indeed, this
follows easily from the fact that if $\alpha$ and $\delta$ are small enough, then the trivial upper bound for
the left-hand sides of \eqref{601}, \eqref{602}, \eqref{604}
and \eqref{605} is $\frac {d^{5/2}X^{\epsilon}}{\sqrt {X}}$ for any $
\epsilon >0$, and taking the
differences the trivial upper bound becomes smaller. We
use here that because of the presence of the
factor $\psi_0\left(\frac {\left|t_1-t_2\right|}{X^{\frac 12-\alpha}}\right
)$ in \eqref{678} we have \eqref{741} if $\alpha$ and $\delta$ are small enough.

Completely similarly, inserting the factor \eqref{602,31} instead of the right inequality of
the condition \eqref{7,90}, and inserting the factor \eqref{602,33} instead of the right inequality of
the condition \eqref{110} we make admissible errors.

On the other hand, inserting the factor \eqref{602,32} instead of the left inequality of
the condition \eqref{7,90}, and inserting the factor
\eqref{602,34} instead of the left inequality of the
condition \eqref{110} we make admissible errors. To
see this we note that there is a $c>0$ such that if $\beta_1,\alpha
>0$ are small enough,
\[a_i,\sqrt {X}-a_i,\gg X^{\frac 12-\alpha},\]
$\delta$ is small enough in terms of $\beta_1,\alpha$, and $J$ is large enough in terms
of $\delta$, then for any $0\le j_1\le J$ the following two statements are true:

For any $t_1,t_2$ such that \eqref{602,3} is nonzero and for any $
f$ satisfying that
\begin{equation}F\le B\left(S_0\left(j_1,t_1\right),T_0\left(0,t_
2\right)\right)-\frac d{a_i^2}X^{\delta}\label{700,5}\end{equation}
and that \eqref{602,32} is nonzero we have that
\begin{equation}\sum_{j_2=0}^Ja_{j_1,j_2}\left(B\left(S_0\left(j_
1,t_1\right),T_0\left(j_2,t_2\right)\right)-F\right)^{3/2}\ll\left
(\frac dX\right)^{3/2}X^{-c}.\label{701}\end{equation}
For any $t_1,t_2$ such that \eqref{602,3} is nonzero and
$\left|t_2-t_1\right|\gg X^{\frac 12-\alpha}$and for any $f$ satisfying that
\begin{equation}F\le A\left(S_0\left(j_1,t_1\right),T_0\left(0,t_
2\right)\right)-\frac {d\left|t_2-t_1\right|}{a_i\left(X-a_i^2\right
)}X^{\delta}\label{701,5}\end{equation}
and that \eqref{602,34} is nonzero we have that
\begin{equation}\sum_{j_2=0}^Ja_{j_1,j_2}\left(A\left(S_0\left(j_
1,t_1\right),T_0\left(j_2,t_2\right)\right)-F\right)^{3/2}\ll\left
(\frac dX\right)^{3/2}X^{-c}.\label{702}\end{equation}
Indeed, \eqref{701} follows from Lemma \ref{lem:5} and
\cite[(6.62)]{Biro}, while \eqref{702} follows from Lemma
\ref{lem:6} and \cite[(6.51)]{Biro}.

We also use that \eqref{700,5} implies the right
inequality of \eqref{7,90} and \eqref{701,5} implies the right
inequality of \eqref{110}. This follows from \cite[(6.28)]{Biro} and from \cite[(6.29)]{Biro},
respectively.

The lemma follows.
\end{proof}

\subsection{A new expression for $\alpha_G\left(t_1,t_2,f\right)$}

Let $\gamma_0$ be a smooth nonnegative even function on the real line
such that $\gamma_0\left(y\right)=0$ for $y\ge 1$, and $\gamma_0\left
(y\right)+\gamma_0\left(1-y\right)=1$
for $0\le y\le 1$. Such a function clearly exists, and it is easy to see that
\[\sum_{m=-\infty}^{\infty}\gamma_0\left(y-m\right)=1\]
for every real $y$.

Let $\phi_0\left(x\right):=\gamma_0\left(\log x\right)$ for $x>0$. Then $
\phi_0$
is a smooth nonnegative function on the positive real axis,
$\phi_0\left(x\right)=\phi_0\left(1/x\right)$ for every $x>0$, $\phi_
0\left(x\right)=0$ for $x>e$ and
for $x<1/e$ and we have
\[\sum_{m=-\infty}^{\infty}\phi_0\left(\frac x{e^m}\right)=1\]
for every $x>0$. Let $\Phi_0$ be the Mellin transform of $\phi_0$.

\begin{lemma}\label{lem:12,9} Let $G$ and $R$ be positive integers such that $
4|G$ and
$p|2R\Leftrightarrow p|G$, and let $\left(A,B,C\right)\in H_{G,R}$. Let $
t_1,t_2>2$ and $f$ be integers such that
$f^2>\left(t_1^2-4\right)\left(t_2^2-4\right)$, $f^2\le 4X^2$ and \eqref{95} and \eqref{96}
are true. Let us write $N:=\frac {f^2-\left(t_1^2-4\right)\left(t_
2^2-4\right)}G$. Then $N$ is
an integer, $\gcd\left(N,G\right)=1$ and we have that $\alpha_G\left
(t_1,t_2,f\right)$ equals the sum of
\[\sum_{1\le e^m\le 2eX}\sum_{D|N}\left(\frac {t_1^2-4}D\right)\phi_
0\left(\frac D{e^m}\right)\]
and
\[\kappa\sum_{2eX<e^m\le 4eX^2}\sum_{1\le e^r\le 2eX}\sum_{D|N}\left
(\frac {t_1^2-4}D\right)\phi_0\left(\frac D{e^r}\right)\frac 1{2\pi
i}\int_{\left(0\right)}\left(\frac {De^m}N\right)^{-s}\Phi_0\left
(s\right)ds,\]
where $m$ and $r$ run over integers, $\kappa :=\kappa_{G,R,A,B,C}$ depends only on $
G,R,A,B$ and $C$, and we have $\kappa_{G,R,A,B,C}\in \{-1,1\}$.
\end{lemma}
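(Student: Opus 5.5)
The plan is to first establish the two arithmetic facts about $N$, and then to evaluate $\alpha_G$ by a smooth dyadic decomposition of the divisor sum combined with the divisor switch $D\mapsto N/D$ (Dirichlet hyperbola method) for large $D$.

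\emph{Arithmetic facts about $N$.} By \eqref{95} the residues of $t_1,t_2,f$ modulo $\gamma(G,R)$ agree with those of $A,B,C$. Since $\gamma(G,R)$ contains each prime $p|G$ to a power exceeding $\nu_p(G)$, the $p$-part of $f^2-(t_1^2-4)(t_2^2-4)$ equals that of $C^2-(A^2-4)(B^2-4)$ for every $p|G$. By \eqref{680} this common $p$-part is exactly $p^{\nu_p(G)}$. Hence $G$ divides $f^2-(t_1^2-4)(t_2^2-4)$ and $\gcd(N,G)=1$; since $4|G$, $N$ is odd, and $N>0$ by hypothesis. Consequently the condition $\gcd(D,G)=1$ in $\alpha_G$ is automatic, and
\[\alpha_G(t_1,t_2,f)=\sum_{D|N}\left(\frac{t_1^2-4}D\right).\]
We also need $\gcd(t_1^2-4,N)=1$. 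Suppose $p|t_1^2-4$ and $p|N$. Then $p|f^2$, so $p|E_{t_1,f}$, and by \eqref{96} $p|G$, contradicting $\gcd(N,G)=1$.

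\emph{Splitting the divisor sum.} Insert $1=\sum_m\phi_0(D/e^m)$; only $m$ with $1\le e^m\le 4eX^2$ occur, because $1\le D\le N\le 4X^2$. The terms with $e^m\le 2eX$ give exactly the first expression. For $e^m>2eX$ every contributing $D$ satisfies $D>2X$. Here we pass to the complementary divisor $D'=N/D$. Multiplicativity of the Jacobi symbol in the denominator, together with $\gcd(t_1^2-4,N)=1$, gives
\[\left(\frac{t_1^2-4}{D}\right)=\kappa\left(\frac{t_1^2-4}{D'}\right),\qquad \kappa:=\left(\frac{t_1^2-4}{N}\right)\in\{\pm1\}.\]
Since $D'=N/D<4X^2/(2X)=2X$, we can insert $\sum_r\phi_0(D'/e^r)$ with only $1\le e^r\le 2eX$ contributing. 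Finally, the weight $\phi_0(D/e^m)=\phi_0(N/(D'e^m))=\phi_0(D'e^m/N)$ (using $\phi_0(x)=\phi_0(1/x)$) is written by Mellin inversion as $\frac1{2\pi i}\int_{(0)}(D'e^m/N)^{-s}\Phi_0(s)\,ds$. Renaming $D'$ as $D$ yields the second expression.

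\emph{Main obstacle: $\kappa$ depends only on $G,R,A,B,C$.} Write $t_1^2-4=R\,a'$ with $\gcd(a',2G)=1$; this is possible since $R$ is exactly the $G$-part of $t_1^2-4$ and every prime of $2R$ divides $G$. Then $\kappa=\left(\frac RN\right)\left(\frac{a'}N\right)$. By quadratic reciprocity $\left(\frac{a'}N\right)=\pm\left(\frac N{a'}\right)$, with the sign determined by $a',N$ modulo $4$. Since $NG\equiv f^2\pmod{a'}$ and $\gcd(f,a')=1$ (a common prime would divide $E_{t_1,f}$, hence $G$), we get $\left(\frac N{a'}\right)=\left(\frac G{a'}\right)$. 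Reciprocity again expresses $\left(\frac G{a'}\right)$ through $a'$ modulo $4G$. Likewise $\left(\frac RN\right)$ depends only on $N$ modulo $4R$ (or modulo $8R$ for the factor $2$). It remains to check that $a'$ modulo $4G$ and $N$ modulo $8R$ are fixed by the residues of $t_1,t_2,f$ modulo $\gamma(G,R)=16RG\prod_{p|G}p$: we know $t_1^2-4$ modulo $16RG$, hence $a'$ modulo $16G$, and $GN$ modulo $16RG$, hence $N$ modulo $16R$. I expect this bookkeeping, particularly at the prime $2$, to be the only delicate step, and it is where the precise choice of $\gamma(G,R)$ in \eqref{679} is used.
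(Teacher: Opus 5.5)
Your proposal is correct and follows essentially the same route as the paper. Both use the same partition of unity, the switch to the complementary divisor $N/D$ when $e^m>2eX$, Mellin inversion of $\phi_0(De^m/N)$, and quadratic reciprocity together with $GN\equiv f^2$ modulo $(t_1^2-4)/R$ to show that the sign is fixed by the residues modulo $\gamma(G,R)$. The only difference is in organization. You obtain $\left(\frac{t_1^2-4}{D}\right)=\kappa\left(\frac{t_1^2-4}{N/D}\right)$ directly from multiplicativity with $\kappa=\left(\frac{t_1^2-4}{N}\right)$, and you apply reciprocity only to $N$. The paper instead applies reciprocity to $D$ and to $N/D$ separately, and arrives at the same expression $\left(\frac RN\right)(-1)^{\frac{N-1}{2}\cdot\frac{(t_1^2-4)/R-1}{2}}\left(\frac{G}{(t_1^2-4)/R}\right)$ for $\kappa$.
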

\begin{proof}
By \eqref{96}, \eqref{680}, \eqref{95} and \eqref{679} we have that $
p|2\gcd\left(t_1^2-4,f\right)\Leftrightarrow p|G$ and
\[\prod_{p|G}p^{\nu_p\left(f^2-\left(t_1^2-4\right)\left(t_2^2-4\right
)\right)}=G,\;\prod_{p|G}p^{\nu_p\left(t_1^2-4\right)}=R.\]
In particular, $N$ is an integer and $\gcd\left(N,G\right)=1$.

Then we have that if $D|f^2-\left(t_1^2-4\right)\left(t_2^2-4\right
)$ and $\gcd\left(D,G\right)=1$, then
\begin{equation}\left(\frac {t_1^2-4}D\right)=\left(\frac RD\right
)\left(\frac {\left(t_1^2-4\right)/R}D\right).\label{501}\end{equation}
Since $2|G$, we see that $\left(t_1^2-4\right)/R$ and $D$ are odd. By
quadratic reciprocity we have
\begin{equation}\left(\frac {\left(t_1^2-4\right)/R}D\right)=\left
(-1\right)^{\frac {D-1}2\frac {\frac {t_1^2-4}R-1}2}\left(\frac D{\left
(t_1^2-4\right)/R}\right),\label{502}\end{equation}
and
\begin{equation}\left(\frac D{\left(t_1^2-4\right)/R}\right)=\left
(\frac {\left(f^2-\left(t_1^2-4\right)\left(t_2^2-4\right)\right)
/D}{\left(t_1^2-4\right)/R}\right).\label{503}\end{equation}
To see this last statement we have to check that
$\gcd\left(f,\frac {t_1^2-4}R\right)=1$. But this is true, since a common prime
divisor would divide $G$, but $\gcd\left(G,\frac {t_1^2-4}R\right
)=1$.
We see that
\begin{equation}\left(\frac {\left(f^2-\left(t_1^2-4\right)\left(
t_2^2-4\right)\right)/D}{\left(t_1^2-4\right)/R}\right)=\left(\frac {
D^{\ast}}{\left(t_1^2-4\right)/R}\right)\left(\frac G{\left(t_1^2
-4\right)/R}\right),\label{504}\end{equation}
where
\begin{equation}D^{\ast}:=\frac {f^2-\left(t_1^2-4\right)\left(t_
2^2-4\right)}{DG}.\label{505}\end{equation}
Again by qauadratic reciprocity we get
\begin{equation}\left(\frac {D^{\ast}}{\left(t_1^2-4\right)/R}\right
)=\left(-1\right)^{\frac {D^{\ast}-1}2\frac {\frac {t_1^2-4}R-1}2}\left
(\frac {t_1^2-4}{D^{\ast}}\right)\left(\frac R{D^{\ast}}\right).\label{506}\end{equation}
By \eqref{501}, \eqref{502}, \eqref{503}, \eqref{504},
\eqref{506} and \eqref{505} we obtain
\begin{equation}\left(\frac {t_1^2-4}D\right)=\left(\frac {t_1^2-
4}{D^{\ast}}\right)\kappa\label{507}\end{equation}
with
\[\kappa :=\left(\frac R{\frac {f^2-\left(t_1^2-4\right)\left(t_2^
2-4\right)}G}\right)\left(-1\right)^{\frac {D+D^{\ast}-2}2\frac {\frac {
t_1^2-4}R-1}2}\left(\frac G{\left(t_1^2-4\right)/R}\right).\]
We have that $4|$$\left(D-1\right)\left(D^{\ast}-1\right)$, since $
D$ and $D^{\ast}$ are odd.
Therefore
\[\frac {D+D^{\ast}-2}2-\frac {DD^{\ast}-1}2\]
is even. Consequently, we get by \eqref{505} that $\kappa$ equals
\[\left(\frac R{\frac {f^2-\left(t_1^2-4\right)\left(t_2^2-4\right
)}G}\right)\left(-1\right)^{\frac {\frac {f^2-\left(t_1^2-4\right
)\left(t_2^2-4\right)}G-1}2\frac {\frac {t_1^2-4}R-1}2}\left(\frac
G{\left(t_1^2-4\right)/R}\right).\]
We see by \eqref{679} and \eqref{95} that $t_1$, $t_2$ and $f$
are fixed modulo $4GR$, hence $\kappa$ is determined by $G,R,A,B$
and $C$. Then by \eqref{505} and \eqref{507} we obtain
that $\alpha_G\left(t_1,t_2,f\right)$ equals the sum of
\[\sum_{1\le e^m\le 2eX}\sum_{D|N}\left(\frac {t_1^2-4}D\right)\phi_
0\left(\frac D{e^m}\right)\]
and
\[\kappa\sum_{2eX<e^m\le 4eX^2}\sum_{D|N}\left(\frac {t_1^2-4}D\right
)\phi_0\left(\frac {De^m}N\right).\]
Here we must have $\frac {De^m}N\le e$, but $N\le 4X^2,$ $e^m>2eX$,
hence $1\le D\le 2X$. So we may insert the factor
$\sum_{1\le e^r\le 2eX}\phi_0\left(\frac D{e^r}\right)$. The lemma follows by Mellin
inversion.
\end{proof}

\subsection{Applying the Poisson formula}

\begin{lemma}\label{lem:11,94}Assume \eqref{201}. Let us use
the notations of Lemma \ref{lem:11,9}. Assume that there are positive numbers $
\beta_0,\beta_1,\beta_2>0$ such
that $\beta_1$ is small enough, and for every small enough $\alpha
>0$, for every $\delta >0$ which is small enough in terms of
$\alpha$, for every positive integer $J$ which is large enough
in terms of $\alpha$ and $\delta$, for every integers $R$ and $G$ satisfying that
$1\le G,R\le X^{\alpha},$ $4|G$ and $p|2R\Leftrightarrow p|G$, for every
$\left(A,B,C\right)\in H_{G,R}$, for every $i\in\mathcal I$, for every
integer $1\le k\le X^{\beta_2}$ with $\gcd\left(k,G\right)=1$, for every purely
imaginary $s$ with $|s|\le X^{\beta_2}$, and for every integer $r$ with
$1\le e^r\le 2eX$ we have writing $\gamma :=\gamma\left(G,R\right
)$ that
\begin{equation}\sum_{\left(j_1,j_2,\,t_1,t_2,\,D,\,N\right)\in H_{
J,A,B,k,G}}a_{j_1,j_2}\frac {\left(\frac {t_1^2-4}D\right)\phi_0\left
(\frac D{e^r}\right)A_N^{(q)}}{D^{1+s}}\ll_{\alpha ,J}\frac {d^{5
/2}}{\sqrt {X}}X^{-\beta_0},\label{911}\end{equation}
for $q=1,2$, where $H_{J,A,B,k,G}$ is the set of
$\left(j_1,j_2,\,t_1,t_2,\,D,\,N\right)\in\mathbb{Z}^6$ satisfying the conditions
\begin{equation}0\le j_1,j_2\le J,\,t_1,t_2>2,\,D\ge 1,\label{912}\end{equation}
\begin{equation}\,t_1\equiv A\left({\rm m}{\rm o}{\rm d}\,\gamma\right
),t_2\equiv B\left({\rm m}{\rm o}{\rm d}\,\gamma\right),\,k|t_1^2
-4,\,\gcd\left(D,kG\right)=1,\label{913}\end{equation}
and
\[A_N^{(q)}:=B_N^{(q)}C_N,\]
where $C_N$ denotes
\[\sum_{h\:{\rm m}{\rm o}{\rm d}\:\gamma D,kh\equiv C\left({\rm m}
{\rm o}{\rm d}\,\gamma\right),D|k^2h^2-\left(t_1^2-4\right)\left(
t_2^2-4\right)}e\left(\frac {Nh}{\gamma D}\right),\]
finally
\[B_N^{(q)}:=\int_{-\infty}^{\infty}\left(\left(t_1^2-4\right)\left
(t_2^2-4\right)\right)^sG_q\left(h,t_2\right)e\left(-\frac N{\gamma
D}h\right)dh\]
with
\begin{equation}G_1\left(h,t_2\right):=A_{X,i,j_1,j_2}^{\ast}\left
(t_1,t_2,kh\right)\left(B\left(S_0,T_0\right)-F\right)^{3/2}\left
(B^2\left(S_0,T_0\right)-1\right)^s,\label{913,1}\end{equation}
\begin{equation}G_2\left(h,t_2\right):=C_{X,i,j_1,j_2}^{\ast}\left
(t_1,t_2,kh\right)\left(A\left(S_0,T_0\right)-F\right)^{3/2}\left
(A^2\left(S_0,T_0\right)-1\right)^s,\label{913,2}\end{equation}
and $F$ is given by \eqref{742} with  $f=kh$ there.

Then the left-hand side of \eqref{202} is
\[\ll\frac {d^{5/2}}{\sqrt {X}}X^{-\beta}\]
with a positive $\beta .$
\end{lemma}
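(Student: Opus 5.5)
The plan is to reduce to Lemma \ref{lem:11,9}: it suffices to show that \eqref{911} for $q=1,2$ implies \eqref{601} and \eqref{602}. I treat $X^{\ast}_{G,R,A,B,C}(i,j_1,j_2)$; the sum $Y^{\ast}$ is handled identically with $C^{\ast}_{X,i,j_1,j_2}$, $A(S_0,T_0)$ and $G_2$ in place of $A^{\ast}_{X,i,j_1,j_2}$, $B(S_0,T_0)$ and $G_1$.

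First I insert the expression for $\alpha_G(t_1,t_2,f)$ from Lemma \ref{lem:12,9}. The hypotheses of that lemma hold on the support: \eqref{602,31}, \eqref{602,32} together with \eqref{80} give $f^2>(t_1^2-4)(t_2^2-4)$ and $|f|\le 2X$. Here $N=(f^2-(t_1^2-4)(t_2^2-4))/G$.

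The key identity is $(t_1^2-4)(t_2^2-4)F^2=f^2$, so
\[
N=\frac{(t_1^2-4)(t_2^2-4)(F^2-1)}{G}.
\]
In the second piece, $\left(De^m/N\right)^{-s}$ therefore splits into $D^{-s}e^{-ms}G^{-s}$, a factor $\bigl((t_1^2-4)(t_2^2-4)\bigr)^s$, and $(F^2-1)^{s}$. On the support, $F$ lies within $O(dX^{\delta}/X)$ of $B(S_0,T_0)$, so $(F^2-1)^s$ can be replaced by $(B^2(S_0,T_0)-1)^s$ times a smooth factor with small derivatives. This factor can be absorbed or Taylor expanded, since $|s|$ is effectively bounded by $X^{\beta_2}$ after truncating the $s$-integral using the rapid decay of $\Phi_0$. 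The first piece is the same with $s=0$. The $O(\log X)$ values of $m$ and $r$ cost only $X^{\epsilon}$.

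Next I remove the condition \eqref{96}, namely that the primes dividing $2\gcd(t_1^2-4,f)$ are exactly those dividing $G$. The congruences \eqref{95} already fix the $p|G$ part, so what remains is that no prime $p\nmid G$ divides both $t_1^2-4$ and $f$. I handle this by Möbius inversion $\sum_{k|\gcd(t_1^2-4,f),\,(k,G)=1}\mu(k)$ and write $f=kh$. Large $k$ (say $k>X^{\beta_2}$) contribute negligibly by the trivial count used in Lemma \ref{lem:3}, i.e. $k|t_1^2-4$ and $k|f$ each save a factor $k$. This produces the conditions $k|t_1^2-4$ and $\gcd(D,kG)=1$ of \eqref{913}; the latter holds because $D|N$ and $(N,G)=1$, and primes of $k$ dividing $D$ can be separated similarly.

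Then, for fixed $j_1,j_2,t_1,t_2,D,k$, I split $h$ into residue classes modulo $\gamma D$. These classes are subject to $kh\equiv C \pmod{\gamma}$ and $D|k^2h^2-(t_1^2-4)(t_2^2-4)$, the latter being exactly $D|\gamma N$ restated. I apply Poisson summation in $h$: the sum over $h$ becomes $\frac{1}{\gamma D}\sum_N C_N B_N^{(1)}$, with $B^{(1)}_N$ the Fourier transform of $G_1$ as in \eqref{913,1}. This produces exactly the left side of \eqref{911} up to the harmless factor $\gamma$. Collecting over $m$, $r$, $k$ and the truncated $s$-integral, with each sum of length $X^{O(\beta_2+\epsilon)}$, gives \eqref{601} with $\beta_0$ reduced by a small multiple of $\beta_2$, and Lemma \ref{lem:11,9} then finishes the proof.

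The main obstacle is the bookkeeping that makes the factorization of $N^{-s}$ exact enough: replacing $(F^2-1)^s$ by $(B^2-1)^s$ within the error, and justifying the truncations in $|s|$ and $k$. For this I would rely on the bound \eqref{741} together with the trivial bound $d^{5/2}X^{\epsilon-1/2}$ for the absolute sums.
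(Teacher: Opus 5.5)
Your proposal is correct and follows the paper's proof essentially step by step: Möbius inversion over $k\mid\gcd(t_1^2-4,f)$ with $\gcd(k,G)=1$ to remove \eqref{96}, truncation at $k\le X^{\beta_2}$, substitution of Lemma \ref{lem:12,9} with $N^{s}$ replaced by $\left((t_1^2-4)(t_2^2-4)\right)^s\left(B^2(S_0,T_0)-1\right)^s\left(1+O(X^{-c})\right)$ (resp.\ with $A$), and Poisson summation in $h=f/k$ over residue classes modulo $\gamma D$. The one step where you are vaguer than necessary is $\gcd(D,k)=1$, which the paper gets for free: since $k\mid t_1^2-4$, the Jacobi symbol $\left(\frac{t_1^2-4}{D}\right)$ vanishes whenever $\gcd(D,k)>1$.
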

\begin{proof}Let $G,R,A,B$ and $C$ be as in the lemma and
assume \eqref{95} for some $t_1,t_2,f$. If $p|G$, $p\neq 2$, then $
p|2R\Leftrightarrow p|G$ implies $p|R$,
hence \eqref{680} and \eqref{95} imply $\nu_p\left(t_1^2-4\right)
>0$ and
$\nu_p\left(f\right)>0$. Hence if \eqref{95} holds, then \eqref{96} is
true if and only if there is no such prime divisor $p$ of
$\gcd\left(t_1^2-4,f\right)$ for which $\gcd\left(p,G\right)=1$. Consequently, if an integer triple $\left
(t_1,t_2,f\right)$
satisfies \eqref{95}, then
\[\left(t_1,t_2,f\right)\in U^{\ast}\left(G,R,A,B,C\right)\Leftrightarrow
\sum_{k|\gcd\left(t_1^2-4,f\right),\gcd\left(k,G\right)=1}\mu\left
(k\right)=1,\]
\[\left(t_1,t_2,f\right)\notin U^{\ast}\left(G,R,A,B,C\right)\Leftrightarrow
\sum_{k|\gcd\left(t_1^2-4,f\right),\gcd\left(k,G\right)=1}\mu\left
(k\right)=0.\]
So in $X_{G,R,A,B,C}^{\ast}\left(i,j_1,j_2\right)$  and also in $
Y_{G,R,A,B,C}^{\ast}\left(i,j_1,j_2\right)$
one can insert the factor $\sum_{k|\gcd\left(t_1^2-4,f\right),\gcd\left
(k,G\right)=1}\mu\left(k\right)$
and extend the summation to integer triple $\left(t_1,t_2,f\right
)$
satisfying \eqref{95}. It is also clear that if $\beta_2>0$ is
given and $\alpha$ and $\beta_1$ are small enough in terms of $\beta_
2$, then
the contribution of $k>X^{\beta_2}$ in \eqref{601} and \eqref{602}
is $\ll\frac {d^{5/2}}{\sqrt {X}}X^{-c}$ with some $c>0$. For a given $
k\le X^{\beta_2}$ with
$\gcd\left(k,G\right)=1$ we write $f=kh$.

Next we substitute the expression given for $\alpha_G\left(t_1,t_
2,f\right)$
in Lemma \ref{lem:12,9}. Note that because of
the presence of the factor $\left(\frac {t_1^2-4}D\right)$ we may assume
$\gcd\left(D,k\right)=1$, since we have $k|t_1^2-4.\,$

We use that $\Phi_0\left(s\right)$ decays faster than polynomially, and we use also that for
$|s|\le X^{\beta_2}$ we have that $\left(f^2-\left(t_1^2-4\right)\left
(t_2^2-4\right)\right)^s$ equals
\[\left(\left(t_1^2-4\right)\left(t_2^2-4\right)\right)^s\left(B^
2\left(S_0,T_0\right)-1\right)^s\left(1+O\left(X^{-c}\right)\right
)\]
in the case $A_{X,i,j_1,j_2}^{\ast}\left(t_1,t_2,f\right)\neq 0$, and it equals
\[\left(\left(t_1^2-4\right)\left(t_2^2-4\right)\right)^s\left(A^
2\left(S_0,T_0\right)-1\right)^s\left(1+O\left(X^{-c}\right)\right
)\]
in the case $C_{X,i,j_1,j_2}^{\ast}\left(t_1,t_2,f\right)\neq 0$ with some positive $
c$ if $\beta_1$, $\beta_2$ and $\alpha$ are small enough.

Then we apply the Poisson summation formula in the
variable $h$. We have arithmetic weights depending on the
residue of $h$ modulo $\gamma\left(G,R\right)D$.

The lemma follows from these considerations.
\end{proof}

\subsection{The analytic and the discrete Fourier
transform}

\begin{lemma}\label{lem:11,95}Let us write
\[h_{\beta_1,C}\left(Y\right):=\int_0^{\infty}e\left(yY\right)y^{
3/2}\psi_0\left(X^{\beta_1}y\right)\left(1-\psi_0\left(y+C\right)\right
)dy,\]
and let us use the notations $B_N^{(1)}$ and $B_N^{(2)}$ from Lemma
\ref{lem:11,94}. Assume \eqref{201} and that $\delta$ is small enough and
\eqref{79} is true. Then we have that $B_N^{(1)}$ equals the product of
\[\frac {d^{5/2}A_X\left(t_1,t_2\right)\left(B^2\left(S_0,T_0\right
)-1\right)^s\prod_{k=1}^2\left(f_{\beta_1}\left(\frac {t_k-a_i}{a_{
i+1}-a_i}\right)\left(t_k^2-4\right)^s\right)}{k\left(\sqrt {t_1^
2-4}\sqrt {t_2^2-4}\right)^{3/2}}\]
and
\[\sum_{\sigma\in \{-1,1\}}h_{\beta_1,C_1\left(j_1,j_2\right)}\left
(\sigma\frac {Nd}{kD\gamma}\right)e\left(-\sigma\frac N{kD\gamma}\sqrt {
t_1^2-4}\sqrt {t_2^2-4}B\left(S_0,T_0\right)\right).\]
If we assume also that \eqref{79,15} is true, then $B_N^{(2)}$ equals the product of
\[\frac {d^{5/2}C_X\left(t_1,t_2\right)\left(A^2\left(S_0,T_0\right
)-1\right)^s\prod_{k=1}^2\left(f_{\beta_1}\left(\frac {t_k-a_i}{a_{
i+1}-a_i}\right)\left(t_k^2-4\right)^s\right)}{k\left(\sqrt {t_1^
2-4}\sqrt {t_2^2-4}\right)^{3/2}}\]
and
\[\sum_{\sigma\in \{-1,1\}}h_{\beta_1,C_2\left(j_1,j_2\right)}\left
(\sigma\frac {Nd}{kD\gamma}\right)e\left(-\sigma\frac N{kD\gamma}\sqrt {
t_1^2-4}\sqrt {t_2^2-4}A\left(S_0,T_0\right)\right),\]
where
\[C_1\left(j_1,j_2\right):=\frac {B\left(S_0\left(j_1,t_1\right),
T_0\left(0,t_2\right)\right)-\frac d{a_i^2}X^{\delta}-B\left(S_0\left
(j_1,t_1\right),T_0\left(j_2,t_2\right)\right)}{d\left(\left(t_1^
2-4\right)\left(t_2^2-4\right)\right)^{-1/2}},\]
\[C_2\left(j_1,j_2\right):=\frac {A\left(S_0\left(j_1,t_1\right),
T_0\left(0,t_2\right)\right)-\frac {d\left|t_2-t_1\right|X^{\delta}}{
a_i\left(X-a_i^2\right)}-A\left(S_0\left(j_1,t_1\right),T_0\left(
j_2,t_2\right)\right)}{d\left(\left(t_1^2-4\right)\left(t_2^2-4\right
)\right)^{-1/2}}.\]
\end{lemma}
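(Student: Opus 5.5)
The plan is a direct computation of the Fourier integral $B_N^{(1)}$ (and likewise $B_N^{(2)}$) by unwinding the definitions \eqref{913,1}, \eqref{602,1}, \eqref{602,31}, \eqref{602,32} and making a linear change of variables in $h$.

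First, in $G_1(h,t_2)$ the only factors that depend on $h$ are the cut-offs \eqref{602,31} and \eqref{602,32}, and the power $\left(B(S_0,T_0)-F\right)^{3/2}$, where $F=|kh|/\sqrt{t_1^2-4}\sqrt{t_2^2-4}$. The factors $A_X(t_1,t_2)$, the product \eqref{602,3}, $\left(B^2(S_0,T_0)-1\right)^s$ and $\left((t_1^2-4)(t_2^2-4)\right)^s$ do not depend on $h$, so they come out of the integral. This produces the stated prefactor, except for the powers of $d$, $k$ and $\sqrt{t_1^2-4}\sqrt{t_2^2-4}$.

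Next I split the integral into $h>0$ and $h<0$; these give $\sigma=1$ and $\sigma=-1$. Write $P:=\sqrt{t_1^2-4}\sqrt{t_2^2-4}$. For $\sigma h>0$ I substitute
\[
y:=\frac{PB(S_0,T_0)-k|h|}{d},\qquad k\sigma h=PB(S_0,T_0)-dy,\qquad dh=\frac{d}{k}\,dy
\]
(up to orientation). Then $B-F=dy/P$, so $(B-F)^{3/2}=d^{3/2}P^{-3/2}y^{3/2}$. The factor \eqref{602,31} becomes $\psi_0(X^{\beta_1}y)$. For \eqref{602,32}, the argument of $\psi_0$ equals
\[
\frac{P\left(B(S_0(j_1,t_1),T_0(0,t_2))-\frac{d}{a_i^2}X^{\delta}\right)-PB(S_0,T_0)}{d}+y=C_1(j_1,j_2)+y,
\]
by the definition of $C_1$. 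The support of $\psi_0(X^{\beta_1}y)$ forces $y>0$, and \eqref{602,32} bounds $y$ above, so the integral becomes an integral over $(0,\infty)$.

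The phase is
\[
e\left(-\frac{N}{\gamma D}h\right)=e\left(-\sigma\frac{N}{kD\gamma}PB(S_0,T_0)\right)e\left(\sigma\frac{Nd}{kD\gamma}y\right).
\]
The $y$-integral is therefore exactly $h_{\beta_1,C_1(j_1,j_2)}\left(\sigma Nd/(kD\gamma)\right)$. Collecting the Jacobian $d/k$ and the factor $d^{3/2}P^{-3/2}$ gives $d^{5/2}/\left(kP^{3/2}\right)$, which matches the claimed formula.

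One point needs checking: the two half-lines really separate. That is, on the support, $kh$ with $h>0$ and with $h<0$ do not interfere, and $PB(S_0,T_0)-dy$ stays positive. This holds because $PB\asymp X$ while $dy\ll dX^{\delta}\cdot$(bounded), and $d\le X^{99/100}$. Here \eqref{79}, \eqref{201} and the smallness of $\delta$ are used, so that $C_1$ is of size $O(X^{\delta}\cdot X/a_i^2\cdot\ldots)$ and $y$ stays in a range where $F>1$. The case of $B_N^{(2)}$ is identical, with $A$ in place of $B$, \eqref{602,33} and \eqref{602,34} in place of \eqref{602,31} and \eqref{602,32}, and $C_2$ in place of $C_1$. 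Assumption \eqref{79,15} and \eqref{741} guarantee that $A(S_0,T_0)-1\gg X^{-\epsilon}$, so the positivity requirement still holds.

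The main obstacle is this bookkeeping of supports and signs, namely confirming that $y\ge0$ on the support and that each sign of $h$ contributes exactly once. The rest is routine.
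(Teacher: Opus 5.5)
Your proposal is correct and follows essentially the same route as the paper: split the $h$-integral into $h>0$ and $h<0$, substitute $y=\bigl(\sqrt{t_1^2-4}\sqrt{t_2^2-4}\,B(S_0,T_0)-k|h|\bigr)/d$ (with $A$ in place of $B$ for $B_N^{(2)}$), discard $y\le 0$ via $\psi_0(X^{\beta_1}y)$, and extend the upper limit $y\le \sqrt{t_1^2-4}\sqrt{t_2^2-4}\,B(S_0,T_0)/d$ to $\infty$ because $1-\psi_0(y+C_1)$ vanishes beyond it. Your positivity check of $PB(S_0,T_0)-dy$ on the support is exactly the paper's justification of that last step; for $B_N^{(2)}$ it already follows from $A(S_0,T_0)\ge 1$ and $P\gg X^{1-2\delta}$, so the appeal to \eqref{741} is not actually needed.
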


\begin{proof}Using the definitions it is easy to compute that $B_
N^{(1)}$ equals
\[U_{N,C_1}\left(t_1,t_2\right)A_X\left(t_1,t_2\right)\left(B^2\left
(S_0,T_0\right)-1\right)^s\prod_{k=1}^2\left(f_{\beta_1}\left(\frac {
t_k-a_i}{a_{i+1}-a_i}\right)\left(t_k^2-4\right)^s\right)\]
and $B_N^{(2)}$ equals
\[V_{N,C_2}\left(t_1,t_2\right)C_X\left(t_1,t_2\right)\left(A^2\left
(S_0,T_0\right)-1\right)^s\prod_{k=1}^2\left(f_{\beta_1}\left(\frac {
t_k-a_i}{a_{i+1}-a_i}\right)\left(t_k^2-4\right)^s\right),\]
where $U_{N,C_1}\left(t_1,t_2\right)$ denotes
\[\int_{-\infty}^{\infty}e\left(-\frac N{\gamma D}h\right)\left(B\left
(S_0,T_0\right)-F\right)^{3/2}\psi_0\left(X^{\beta_1}y_1\right)\left
(1-\psi_0\left(y_1+C_1\right)\right)dh,\]
$V_{N,C_2}\left(t_1,t_2\right)$ denotes
\[\int_{-\infty}^{\infty}e\left(-\frac {Nh}{\gamma D}\right)\left
(A\left(S_0,T_0\right)-F\right)^{\frac 32}\psi_0\left(X^{\beta_1}
y_2\right)\left(1-\psi_0\left(y_2+C_2\right)\right)dh,\]
$F$ is given by \eqref{742} with $f=kh$ there, and we use
the abbreviations $C_1=C_1\left(j_1,j_2\right)$, $C_2=C_2\left(j_
1,j_2\right)$,
\begin{equation}y_1:=\frac {\sqrt {t_1^2-4}\sqrt {t_2^2-4}B\left(
S_0,T_0\right)-\left|kh\right|}d,\label{901}\end{equation}
\begin{equation}y_2:=\frac {\sqrt {t_1^2-4}\sqrt {t_2^2-4}A\left(
S_0,T_0\right)-\left|kh\right|}d.\label{902}\end{equation}
We consider separately the parts $h>0$ and $h<0$ and
make the substitutions \eqref{901} and \eqref{902}. We
first obtain in this way that
\begin{equation}-\infty <y_1\le\frac {\sqrt {t_1^2-4}\sqrt {t_2^2
-4}B\left(S_0,T_0\right)}d,\label{191}\end{equation}
\begin{equation}-\infty <y_2\le\frac {\sqrt {t_1^2-4}\sqrt {t_2^2
-4}A\left(S_0,T_0\right)}d.\label{192}\end{equation}
However, because of the factors $\psi_0\left(X^{\beta_1}y_i\right
)$ we can
assume $y_i\ge 0$. On the other hand, we can extend the
integrations to $\infty$, since if \eqref{191} does not hold, then
$1-\psi_0\left(y_1+C_1\right)=0$, and if \eqref{192} does not hold, then
$1-\psi_0\left(y_2+C_2\right)=0$. We obtain the lemma by some computations.
\end{proof}

\begin{lemma}\label{lem:14,9}Let $t_1,t_2>2$ and $D,\gamma ,k$ be positive
integers such that $\gcd\left(\gamma ,k\right)=1$ and
\[\gcd\left(D,2\left(t_1^2-4\right)\gamma k\right)=1.\]
Then for any integers $N,C$ we have that
\begin{equation}\sum_{h\:{\rm m}{\rm o}{\rm d}\:\gamma D,kh\equiv
C\left({\rm m}{\rm o}{\rm d}\,\gamma\right),D|k^2h^2-\left(t_1^2-
4\right)\left(t_2^2-4\right)}e\left(\frac {Nh}{\gamma D}\right)\label{3000}\end{equation}
equals
\[e\left(\frac {NC\overline {kD}}{\gamma}\right)\sum_{\lambda |D,
N}\frac {\overline {\epsilon_{D/\lambda}}}{\sqrt {\frac D{\lambda}}}\left
(\frac {t_1^2-4}{D/\lambda}\right)T\left(\left(t_1^2-4\right)\left
(\frac N{\lambda}\right)^2,\overline {4\gamma^2k^2}\left(t_2^2-4\right
);\frac D{\lambda}\right).\]
\end{lemma}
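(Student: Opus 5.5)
We split the sum by the Chinese remainder theorem, since $\gcd(\gamma,D)=1$ (this follows from $\gcd(D,2(t_1^2-4)\gamma k)=1$). Write $h\equiv h_1 D\overline{D}_\gamma+h_2\gamma\overline{\gamma}_D$, with $h_1$ modulo $\gamma$ and $h_2$ modulo $D$. Here $\overline{D}_\gamma$ is the inverse of $D$ modulo $\gamma$ and $\overline{\gamma}_D$ the inverse of $\gamma$ modulo $D$. Then
\[e\left(\frac{Nh}{\gamma D}\right)=e\left(\frac{N h_1\overline D}{\gamma}\right)e\left(\frac{N h_2\overline\gamma}{D}\right),\]
and the sum \eqref{3000} factors accordingly. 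The condition $kh\equiv C$ modulo $\gamma$ involves only $h_1$. Since $\gcd(k,\gamma)=1$, it forces $h_1\equiv C\overline k$, which gives the factor $e\left(NC\overline{kD}/\gamma\right)$. What remains is
\[\Sigma_D:=\sum_{x\,{\rm mod}\,D,\;k^2x^2\equiv (t_1^2-4)(t_2^2-4)\,({\rm mod}\,D)}e\left(\frac{N\overline\gamma x}{D}\right).\]
Substituting $y=kx$ (legitimate since $\gcd(k,D)=1$), we get the sum of $e(N\overline{\gamma k}\,y/D)$ over $y$ modulo $D$ with $y^2\equiv Q:=(t_1^2-4)(t_2^2-4)$.

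Next we evaluate this sum over square roots in terms of Salié sums; this is the classical identity and the main step. Since $D$ is odd, we write the indicator of $y^2\equiv Q$ through additive characters:
\[\Sigma_D=\frac1D\sum_{a\,{\rm mod}\,D}\sum_{y}e\left(\frac{a(y^2-Q)+by}{D}\right),\qquad b:=N\overline{\gamma k}.\]
We group the $a$ according to $\lambda':=\gcd(a,D)$, so that $a=\lambda' a'$ and $D=\lambda' D'$ with $\gcd(a',D')=1$. The inner sum over $y$ vanishes unless $\lambda'\mid b$, i.e.\ $\lambda'\mid N$. When it does not vanish, it equals $\lambda'$ times a Gauss sum modulo $D'$. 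Completing the square, that Gauss sum is $\epsilon_{D'}\sqrt{D'}\left(\frac{a'}{D'}\right)e\left(-\overline{4a'}(b/\lambda')^2/D'\right)$.

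Summing over $a'$ coprime to $D'$ then gives a Salié sum. Its Jacobi character is $\left(\frac{a'}{D'}\right)$, and its two arguments are $-Q$ and $-\overline{4}(b/\lambda')^2$. Replacing $a'$ by $-a'$ produces a factor $\left(\frac{-1}{D'}\right)$, which combines with $\epsilon_{D'}$ to give $\overline{\epsilon_{D'}}$. We have $\left(\frac{-1}{D'}\right)\epsilon_{D'}=\overline{\epsilon_{D'}}$ because $\epsilon_{D'}^2=\left(\frac{-1}{D'}\right)$. The prefactor is $\frac{\lambda'\sqrt{D'}}{D}=\frac1{\sqrt{D'}}$, which matches the stated $\frac{1}{\sqrt{D/\lambda}}$ with $\lambda=\lambda'$.

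Finally we bring the Salié sum to the stated normalization. We use the standard invariance $T(m,n;c)=\left(\frac{u}{c}\right)T(mu,n\overline u;c)$ for $\gcd(u,c)=1$, which follows from the substitution $x\mapsto ux$. Taking $u$ to be $t_1^2-4$ (coprime to $D$) moves $t_2^2-4$ to the other argument. This produces the factor $\left(\frac{t_1^2-4}{D/\lambda}\right)$ and arguments $(t_1^2-4)(N/\lambda)^2$ and $\overline{4\gamma^2k^2}(t_2^2-4)$, possibly after also using the symmetry $T(m,n;c)=T(n,m;c)$.

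The main obstacle is bookkeeping at the non-coprime $a$: checking that for $\lambda\mid\gcd(D,N)$ the reduced Gauss sum and the $\epsilon$ factors come out exactly as stated. The sign conventions $\epsilon$ versus $\overline\epsilon$, and where the inverse of $4$ lands, also need care. I would verify these first for $D$ prime and then extend by multiplicativity: the twisted multiplicativity of Salié sums and of $\epsilon_c$ across coprime factors of $D$ makes the prime-power case suffice.
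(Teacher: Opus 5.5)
Your proof is correct and follows essentially the paper's route. Both detect $D\mid k^2h^2-(t_1^2-4)(t_2^2-4)$ with additive characters, separate the $\gamma$-part by CRT (you just do this before the detection), sort $a$ by $\lambda=\gcd(a,D)$, evaluate the Gauss sum by completing the square, and recognize a Salié sum via $\left(\frac{-1}{c}\right)\epsilon_c=\overline{\epsilon_c}$. One small fix in your last step: $u=t_1^2-4$ alone leaves $\overline{4\gamma^2k^2}$ on the wrong argument, and the symmetry $T(m,n;c)=T(n,m;c)$ does not repair this, so take $u=4\gamma^2k^2(t_1^2-4)$ instead, whose Jacobi symbol is still $\left(\frac{t_1^2-4}{D/\lambda}\right)$; this is exactly the paper's single substitution $x=-4(t_1^2-4)a_1k^2\gamma^2$.
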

\begin{proof}The condition $D|k^2h^2-\left(t_1^2-4\right)\left(t_
2^2-4\right)$ can be
omitted by inserting the factor
\[\frac 1D\sum_{a\:{\rm m}{\rm o}{\rm d}\:D}e\left(\frac {a\left(
k^2h^2-\left(t_1^2-4\right)\left(t_2^2-4\right)\right)}D\right),\]
hence \eqref{3000} equals
\[\frac 1D\sum_{a\:{\rm m}{\rm o}{\rm d}\:D}e\left(-\frac {a\left
(t_1^2-4\right)\left(t_2^2-4\right)}D\right)T_a\]
with
\[T_a:=\sum_{h\:{\rm m}{\rm o}{\rm d}\:\gamma D,kh\equiv C\left({\rm m}
{\rm o}{\rm d}\,\gamma\right)}e\left(\frac {Nh}{\gamma D}\right)e\left
(\frac {ak^2h^2}D\right).\]
Let us  write $h=\alpha D+H\gamma$, where $H$ runs modulo
$D$, and $\alpha$ is such that $k\alpha D\equiv C\left({\rm m}{\rm o}
{\rm d}\,\gamma\right)$. Then
\[T_a=e\left(\frac {N\alpha}{\gamma}\right)\sum_{H\:{\rm m}{\rm o}
{\rm d}\:D}e\left(\frac {NH}D\right)e\left(\frac {ak^2H^2\gamma^2}
D\right).\]
Let us write $\gcd\left(a,D\right)=\lambda$, and let $D_1:=D/\lambda$ and
$a_1:=a/\lambda$. It is clear that if $\lambda$ does not divide $
N$, then $T_a=0$. If
$\lambda |N$, then
\[T_a=\lambda e\left(\frac {N\alpha}{\gamma}\right)\sum_{H\:{\rm m}
{\rm o}{\rm d}\:D_1}e\left(\frac {\left(N/\lambda\right)H}{D_1}\right
)e\left(\frac {a_1k^2H^2\gamma^2}{D_1}\right).\]
In general, if $C>0$ is an odd integer, $A,B$ are integers
such that $B$ and $C$ are relatively prime, then
\begin{equation}\sum_{x\:{\rm m}{\rm o}{\rm d}\:C}e\left(\frac {A
x+Bx^2}C\right)=e\left(-\frac {A^2\overline {4B}}C\right)\left(\frac
BC\right)\epsilon_C\sqrt {C}.\label{400}\end{equation}
Indeed, it is enough to show it for $A=0$, because the
general case follows by substituting $x$ in place of
$x+A\overline {2B}$. The $A=0$ case is proved in \cite[Lemma 4.8]{I}. Hence we have
shown \eqref{400}, and then we get for the case $\lambda |N$ that
\[T_a=\lambda e\left(\frac {N\alpha}{\gamma}\right)e\left(-\frac {\left
(N/\lambda\right)^2\overline {4a_1k^2\gamma^2}}{D_1}\right)\left(\frac {
a_1k^2\gamma^2}{D_1}\right)\epsilon_{D_1}\sqrt {D_1}.\]
Noting that $\alpha\equiv C\overline {kD}\left({\rm m}{\rm o}{\rm d}\,
\gamma\right)$, writing
\[x=-4\left(t_1^2-4\right)a_1k^2\gamma^2\]
and using $\left(\frac {-1}{D_1}\right)\epsilon_{D_1}=\overline {
\epsilon_{D_1}}$ we obtain the lemma.
\end{proof}

\subsection{Handlig a coprimality condition}

\begin{lemma}\label{lem:11,941}Assume \eqref{201}. Let us
use the notations of Lemmas \ref{lem:11,9} and
\ref{lem:11,94}. Then for every
$\xi >0$ there is a $\beta_0>0$ such that if $\beta_1$, $\beta_2$, $
\alpha$ and $\delta$ are small
enough in terms of $\xi$, then for every positive integer $J$ which is large enough
in terms of $\alpha$ and $\delta$, for every integers $R$ and $G$ satisfying that
$1\le G,R\le x^{\alpha},$ $4|G$ and $p|2R\Leftrightarrow p|G$, for every
$\left(A,B,C\right)\in H_{G,R}$, for every $i\in\mathcal I$, for every
integer $1\le k\le X^{\beta_2}$ with $\gcd\left(k,G\right)=1$, for every purely
imaginary $s$ with $|s|\le X^{\beta_2}$, and for every integer $r$ with
$1\le e^r\le 2eX$ we have writing $\gamma :=\gamma\left(G,R\right
)$ that \eqref{911}
equals
\begin{equation}\sum_{j_1,j_2,\,t_1,t_2,D}a_{j_1,j_2}\frac {\phi_
0\left(\frac D{e^r}\right)}{D^{1+s}}\left(\sum_{K|t_1^2-4,D,K\le
X^{\xi},\gcd\left(K,kG\right)=1}\mu\left(K\right)\right)\sum_{\lambda
|D}\frac {\left(\frac {t_1^2-4}{\lambda}\right)S_{\lambda}^{\ast}}{
\epsilon_{D/\lambda}\sqrt {\frac D{\lambda}}}\label{971}\end{equation}
plus $O\left(\frac {d^{5/2}}{\sqrt {X}}X^{-\beta_0}\right)$, where we sum in \eqref{971} under the
conditions \eqref{912}, \eqref{913}, and $S_{\lambda}^{\ast}$ abbreviates
\[\sum_{0\neq N\in\mathbb{Z},N\equiv 0\left({\rm m}{\rm o}{\rm d}\,
\lambda\right)}B_N^{(q)}e\left(\frac {NC\overline {kD}}{\gamma}\right
)T\left(\left(t_1^2-4\right)\left(\frac N{\lambda}\right)^2,\overline {
4\gamma^2k^2}\left(t_2^2-4\right);\frac D{\lambda}\right).\]
\end{lemma}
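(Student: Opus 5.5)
The plan is to start from the definition of \eqref{911} and undo the Poisson step only as far as needed to apply Lemma \ref{lem:14,9} to the discrete Fourier coefficient $C_N$. There are three things to do: insert a Möbius factor for the missing coprimality condition, truncate it, and then treat the terms $N=0$ and $N\neq 0$ separately.

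\emph{Coprimality.} Lemma \ref{lem:14,9} needs $\gcd\left(D,2\left(t_1^2-4\right)\gamma k\right)=1$, whereas in \eqref{913} we only have $\gcd\left(D,kG\right)=1$.
\begin{itemize}
\item Since $4|G$ and $p|2R\Leftrightarrow p|G$, the definition \eqref{679} shows that every prime divisor of $\gamma$ divides $G$. Hence $\gcd(D,2\gamma)=1$ automatically.
\item The remaining condition $\gcd\left(D,t_1^2-4\right)=1$ already holds on the support of the factor $\left(\frac{t_1^2-4}{D}\right)$. However, we must make it explicit, because after Lemma \ref{lem:14,9} that Jacobi symbol is rewritten. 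So we insert $\sum_{K|\gcd(t_1^2-4,D),\,\gcd(K,kG)=1}\mu(K)$, which is the indicator of this coprimality; primes dividing $kG$ are already excluded by \eqref{913}.
\item We truncate this Möbius sum at $K\le X^{\xi}$. For the tail $K>X^{\xi}$ we bound \eqref{911} trivially: $K|t_1^2-4$ leaves $\ll X^{\epsilon}\sqrt X/K$ choices of $t_1$, and $K|D$ with $D\asymp e^r$ leaves $\ll e^r/K$ choices of $D$. With $|B_N^{(q)}|$ bounded via Lemma \ref{lem:11,95} and the trivial size of $C_N$, this tail gains a factor $X^{-\xi}$. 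Choosing $\alpha,\beta_1,\beta_2,\delta$ small in terms of $\xi$, it is $\ll\frac{d^{5/2}}{\sqrt X}X^{-\beta_0}$.
\end{itemize}

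\emph{Nonzero frequencies.} On the support of the truncated Möbius factor we apply Lemma \ref{lem:14,9} with $C$, $k$, $\gamma$ as given ($\gcd(\gamma,k)=1$ since $\gcd(k,G)=1$). Then we multiply its Jacobi symbol $\left(\frac{t_1^2-4}{D/\lambda}\right)$ by the factor $\left(\frac{t_1^2-4}{D}\right)$ present in \eqref{911}. Because $\gcd(D,t_1^2-4)=1$, the product is $\left(\frac{t_1^2-4}{\lambda}\right)$, which is exactly the symbol appearing in \eqref{971}. The constraint $\lambda|N$ is moved into $S_\lambda^{\ast}$, and $\overline{\epsilon_{D/\lambda}}=1/\epsilon_{D/\lambda}$. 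Rearranging the sums gives \eqref{971}, with the $N\ne0$ part of the Poisson sum assembled into $S^{\ast}_\lambda$.

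\emph{The term $N=0$.} This is the step I expect to be the main obstacle, since $S_\lambda^{\ast}$ runs only over $N\neq0$. The plan is as follows.
\begin{itemize}
\item By Lemma \ref{lem:11,95}, $B_0^{(q)}$ is the explicit smooth factor times $2h_{\beta_1,C_q(j_1,j_2)}(0)$. Equivalently, it is the integral over $h$ that replaces the sum over $f=kh$ in \eqref{601} and \eqref{602}.
\item The arithmetic factor $C_0$ is independent of $j_1,j_2$. So the $N=0$ contribution is the zeroth Poisson term weighted by $\sum_{j_1,j_2}a_{j_1,j_2}\int G_q\,dh$.
\item I would show this alternating sum is small exactly as in the proof of Lemma \ref{lem:11,9}. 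Restricted to the region \eqref{700,5} (respectively \eqref{701,5}), it is $\ll (d/X)^{3/2}X^{-c}$ by \eqref{701} and \eqref{702}, that is, by Lemmas \ref{lem:5} and \ref{lem:6} together with \cite[(6.62), (6.51)]{Biro}. On the complementary region, the integral over $f$ of the $J$-th order difference in $j_2$ is a $J$-th difference of a function smooth in $j_2d/X$, and so gains $(d/X)^{J'}$ for $J$ large.
\item If this fails to give full cancellation in $j_2$ alone, I would fall back on the original identity from \cite{Biro}: the main term $3X$ is produced by the combination $\sum_j(-1)^j\binom Jj N(z,X-jd\tau)$, so the $N=0$ (continuous) part must cancel up to the error already absorbed in \eqref{2.0}.
\end{itemize}
Finally, we sum the resulting bound trivially over $D\asymp e^r$, over $t_1,t_2$, and over the residue data. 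This gives the $O\left(\frac{d^{5/2}}{\sqrt X}X^{-\beta_0}\right)$ error and completes the proof.
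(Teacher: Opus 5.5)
There is a genuine gap in how you handle the M\"obius truncation. At the level of \eqref{911}, truncating costs nothing, but you then use a coprimality that the truncated sum no longer supplies.

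\textbf{Why the truncation step fails.} At the level of \eqref{911} the tail $K>X^{\xi}$ is identically zero. Indeed, $D$ is odd, and any $K>1$ dividing $\gcd(t_1^2-4,D)$ forces $\left(\frac{t_1^2-4}{D}\right)=0$. So your trivial estimate of that tail is vacuous. It is also miscomputed: after Poisson summation the $N$-sum has effective length about $k\gamma D/d$, so \eqref{911} is not trivially $\ll X^{\epsilon}d^{5/2}X^{-1/2}$ when $D\asymp X$.

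The real problem comes next. To apply Lemma \ref{lem:14,9} and merge $\left(\frac{t_1^2-4}{D}\right)\left(\frac{t_1^2-4}{D/\lambda}\right)$ into $\left(\frac{t_1^2-4}{\lambda}\right)$, you need $\gcd(t_1^2-4,D)=1$. That condition comes from the Jacobi symbol you are absorbing, not from $\sum_{K\le X^{\xi}}\mu(K)$. The truncated sum is not an indicator: for example, it equals $1$ when $\gcd(t_1^2-4,D)$ is a prime $p>X^{\xi}$. Consequently \eqref{971}, summed over all $t_1,t_2,D$ subject only to \eqref{912} and \eqref{913}, contains terms with $\gcd(t_1^2-4,D)>1$ that have no counterpart in \eqref{911}.

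The correct bookkeeping is that \eqref{911} equals \eqref{916} exactly, with the full M\"obius sum and all $N$. The error to be controlled is then:
\begin{itemize}
\item the $K>X^{\xi}$ part of \eqref{916}, for all $N$ including $N=0$;
\item the $K\le X^{\xi}$, $N=0$ part.
\end{itemize}
Your proposal never estimates the first piece.

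\textbf{Why that piece needs a real argument.} It is a nonzero combination of Sali\'e sums $T(\cdot,\cdot;D/\lambda)$ with $\gcd(t_1^2-4,D/\lambda)>1$, to which Lemma \ref{lem:14,9} does not apply. A trivial bound for its terms with $D\asymp X$ is of size about $X^{\epsilon}\gamma\,d^{3/2}X^{1/2}K^{-2}$. This exceeds $d^{5/2}X^{-1/2}$ by a factor $X/(dK^{2})\ge X^{1/100}K^{-2}$, so it fails for $K\approx X^{\xi}$ with $\xi$ small.

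\textbf{The missing idea.} The paper undoes the Poisson step. Since the weights in $S_\lambda$ are periodic in $N$ modulo $\gamma D$, one writes $S_\lambda=\frac{1}{\gamma D}\sum_{u}C_uA_u$ with $C_u=\gamma D\,((t_1^2-4)(t_2^2-4))^sG_q(-u,t_2)$. The complete sum $A_u$ is then evaluated explicitly via \eqref{400} and Ramanujan sums. Put $z=\gcd(t_1^2-4,D/\lambda)$ and let $D_2$ be the $z$-part of $D/\lambda$. Then $A_u=0$ unless $z\mid u$ and $D_2\mid (t_1^2-4)(t_2^2-4)-k^2u^2$, and in that case $A_u\ll\gamma\sqrt{(D/\lambda)z}\,D_2D_4$. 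This returns you to the original variable. There the divisibilities $K\mid z$, $z\mid t_1^2-4$ and $z\mid u$ give the saving recorded in \eqref{1001}, \eqref{1002} and \eqref{985}.

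\textbf{The $N=0$ term.} Your treatment is in the right spirit but imprecise. The paper first bounds $|T(0,n;c)|^2\ll c^{1+\epsilon}\gcd(n,c)$. It then uses \eqref{6711}--\eqref{6714}: the factors $(B^2-1)^s$, $(A^2-1)^s$ and $h_{\beta_1,C_q(j_1,j_2)}(0)$ are independent of $j_1$ up to $O(X^{-\beta_0})$. The alternating sum over $j_1$ therefore kills the main term. A gain $(d/X)^{J'}$ from differencing in $j_2$ is not available, because $C_q(j_1,j_2)$ changes by a bounded amount per unit step of $j_2$, not by $O(d/X)$. Your fallback appeal to the main term $3X$ is not an argument.
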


\begin{proof}By Lemma \ref{lem:14,9} we have that the left-hand side
of \eqref{911} equals
\begin{equation}\sum_{\left(j_1,j_2,\,t_1,t_2,\,D,\,N\right)\in H_{
J,A,B,k,G}}a_{j_1,j_2}\frac {\phi_0\left(\frac D{e^r}\right)B_N^{
(q)}\left(\sum_{K|t_1^2-4,D,\gcd\left(K,kG\right)=1}\mu\left(K\right
)\right)}{D^{1+s}}\Sigma ,\label{915}\end{equation}
where $\Sigma$ abbreviates
\[e\left(\frac {NC\overline {kD}}{\gamma}\right)\sum_{\lambda |D,
N}\frac {\overline {\epsilon_{D/\lambda}}}{\sqrt {\frac D{\lambda}}}\left
(\frac {t_1^2-4}{\lambda}\right)T\left(\left(t_1^2-4\right)\left(\frac
N{\lambda}\right)^2,\overline {4\gamma^2k^2}\left(t_2^2-4\right);\frac
D{\lambda}\right).\]
Indeed, we used here that because of the presence of the
factor $\left(\frac {t_1^2-4}D\right)$ on the left-hand side of \eqref{911} we
must have $\gcd\left(t_1^2-4,D\right)=1$, hence we can insert the
factor $\sum_{K|t_1^2-4,D,\gcd\left(K,kG\right)=1}\mu\left(K\right
)$, since we have $\gcd\left(D,kG\right)=1$.

We can write \eqref{915} as
\begin{equation}\sum_{j_1,j_2,\,t_1,t_2,D}a_{j_1,j_2}\frac {\phi_
0\left(\frac D{e^r}\right)}{D^{1+s}}\left(\sum_{K|t_1^2-4,D,\gcd\left
(K,kG\right)=1}\mu\left(K\right)\right)\sum_{\lambda |D}\frac {\overline {
\epsilon_{D/\lambda}}}{\sqrt {\frac D{\lambda}}}\left(\frac {t_1^
2-4}{\lambda}\right)S_{\lambda},\label{916}\end{equation}
where $S_{\lambda}$ abbreviates
\begin{equation}\sum_{N\in\mathbb{Z},N\equiv 0\left({\rm m}{\rm o}
{\rm d}\,\lambda\right)}B_N^{(q)}e\left(\frac {NC\overline {kD}}{
\gamma}\right)T\left(\left(t_1^2-4\right)\left(\frac N{\lambda}\right
)^2,\overline {4\gamma^2k^2}\left(t_2^2-4\right);\frac D{\lambda}\right
),\label{916,05)}\end{equation}
and we sum in \eqref{916} under the conditions \eqref{912}, \eqref{913}.

We want to show that the contribution of $K\gg X^{\xi}$ to
\eqref{916} is admissible for any $\xi >0$. In order to do that we now
transform back $S_{\lambda}$ by the Poisson formula. (We will
apply it only for the part $K\gg X^{\xi}$, therefore we do not
get the same expression after the two applications of
the Poisson formula.) The arithmetic weights depend on
the residue of $N$ modulo $D\gamma$, therefore $S_{\lambda}$ equals
\begin{equation}\frac 1{\gamma D}\sum_{u\in\mathbb{Z}}C_uA_u,\label{916,1}\end{equation}
where $C_u$ denotes
\[\int_{-\infty}^{\infty}\left(\int_{-\infty}^{\infty}\left(\left
(t_1^2-4\right)\left(t_2^2-4\right)\right)^sG_q\left(h,t_2\right)
e\left(-\frac t{\gamma D}h\right)dh\right)e\left(-\frac u{\gamma
D}t\right)dt\]
and $A_u$ denotes
\[\sum_{N\:{\rm m}{\rm o}{\rm d}\:\gamma D,\lambda |N}e\left(\frac {
Nu}{\gamma D}\right)e\left(\frac {NC\overline {kD}}{\gamma}\right
)T\left(\left(t_1^2-4\right)\left(\frac N{\lambda}\right)^2,\overline {
4\gamma^2k^2}\left(t_2^2-4\right);\frac D{\lambda}\right).\]
We have by Fourier inversion that
\begin{equation}C_u=\gamma D\left(\left(t_1^2-4\right)\left(t_2^2
-4\right)\right)^sG_q\left(-u,t_2\right).\label{916,2}\end{equation}
By the definition of the Salié sums we have (writing
$N_1=N/\lambda$, $D_1=D/\lambda$) that $A_u$ equals
\begin{equation}\sum_{x\:{\rm m}{\rm o}{\rm d}\:D_1,\gcd\left(x,D_
1\right)=1}\left(\frac x{D_1}\right)e\left(\frac {\overline {4\gamma^
2k^2}\left(t_2^2-4\right)x}{D_1}\right)R_x\label{917}\end{equation}
with
\[R_x:=\sum_{N_1\:{\rm m}{\rm o}{\rm d}\:\gamma D_1}e\left(\frac {
N_1u}{\gamma D_1}\right)e\left(\frac {N_1C\overline {kD_1}}{\gamma}\right
)e\left(\frac {\overline x\left(t_1^2-4\right)N_1^2}{D_1}\right).\]
Since we have $\gcd\left(\gamma ,D_1\right)=1$, we can write $N_1
=a\gamma +bD_1$,
where $a$ runs modulo $D_1$ and $b$ runs modulo $\gamma$. Then
\[R_x=\sum_{b\:{\rm m}{\rm o}{\rm d}\:\gamma}e\left(\frac {bu}{\gamma}\right
)e\left(\frac {bC\overline k}{\gamma}\right)\sum_{a\:{\rm m}{\rm o}
{\rm d}\:D_1}e\left(\frac {\overline x\left(t_1^2-4\right)a^2\gamma^
2}{D_1}\right)e\left(\frac {au}{D_1}\right).\]
Then we see that $R_x=0$ if $ku+C$ is not divisible by $\gamma$. Let
\[\mbox{\rm $z:=\gcd$$\left(t_1^2-4,D_1\right)$},\]
then we also see that $R_x=0$ if $u$ is not divisible by $z$. If $
\gamma$$|ku+C$ and $z|u$, then
\[R_x=\gamma\sum_{a\:{\rm m}{\rm o}{\rm d}\:D_1}e\left(\frac {\overline
x\frac {t_1^2-4}za^2\gamma^2}{D_1/z}\right)e\left(\frac {au/z}{D_
1/z}\right),\]
and we get by \eqref{400} that
\[R_x=\gamma z\epsilon_{D_1/z}\sqrt {D_1/z}\left(\frac {x\frac {t_
1^2-4}z}{D_1/z}\right)e\left(-\frac {\frac {u^2x}{z^2}\overline {
4\frac {t_1^2-4}z\gamma^2}}{D_1/z}\right).\]
Hence \eqref{917} gives that $A_u=0$ unless $\gamma$$|ku+C$ and $
z|u$
hold, and if these two conditions are true, then $A_u$ equals
the product of
\[\gamma\epsilon_{D_1/z}\sqrt {D_1z}\left(\frac {\frac {t_1^2-4}z}{
D_1/z}\right)\]
and
\begin{equation}\sum_{x\:{\rm m}{\rm o}{\rm d}\:D_1,\gcd\left(x,D_
1\right)=1}\left(\frac xz\right)e\left(\frac {\overline {4\gamma^
2k^2}\left(t_2^2-4\right)x}{D_1}\right)e\left(-\frac {\frac {u^2x}{
z^2}\overline {4\frac {t_1^2-4}z\gamma^2}}{D_1/z}\right).\label{918}\end{equation}
Let $D_2=\gcd\left(D_1,z^{\infty}\right)$ and $D_1=D_2D_3$, then we clearly have
$\gcd\left(D_2,D_3\right)=1$, $z|D_2$ and every prime divisor of $
D_2$ divides $z$.
Let us write $x=AD_2+BD_3$, then \eqref{918} equals the product of
\begin{equation}\sum_{A\:{\rm m}{\rm o}{\rm d}\:D_3,\gcd\left(D_3
,A\right)=1}e\left(\frac {\overline {4\gamma^2k^2}\left(t_2^2-4\right
)A}{D_3}\right)e\left(-\frac {\frac {u^2}z\overline {4\frac {t_1^
2-4}z\gamma^2}A}{D_3}\right)\label{919}\end{equation}
and
\begin{equation}\sum_{B\:{\rm m}{\rm o}{\rm d}\:D_2,\gcd\left(D_2
,B\right)=1}\left(\frac {BD_3}z\right)e\left(\frac {\overline {4\gamma^
2k^2}\left(t_2^2-4\right)B}{D_2}\right)e\left(-\frac {\frac {u^2}{
z^2}\overline {4\frac {t_1^2-4}z\gamma^2}B}{\frac {D_2}z}\right).\label{920}\end{equation}
It is clear that \eqref{919} equals
\begin{equation}\sum_{A\:{\rm m}{\rm o}{\rm d}\:D_3,\gcd\left(D_3
,A\right)=1}e\left(\frac {\left(\left(t_2^2-4\right)\left(t_1^2-4\right
)-k^2u^2\right)A}{D_3}\right).\label{921}\end{equation}
We write $B=B_0+jz$ in \eqref{920}, here $B_0$ runs modulo
$z$ such that
\[\gcd\left(B_0,z\right)=1,\]
and $j$ runs modulo $D_2/z$. Then \eqref{920} equals
\[\sum_{B_0\:{\rm m}{\rm o}{\rm d}\:z,\gcd\left(z,B_0\right)=1}\left
(\frac {B_0D_3}z\right)U_{B_0},\]
where $U_{B_0}$ denotes
\begin{equation}\sum_{j\:{\rm m}{\rm o}{\rm d}\:D_2/z}e\left(\frac {\overline {
4\gamma^2k^2}\left(t_2^2-4\right)\left(B_0+jz\right)}{D_2}\right)
e\left(-\frac {\frac {u^2}{z^2}\overline {4\frac {t_1^2-4}z\gamma^
2}\left(B_0+jz\right)}{\frac {D_2}z}\right).\label{930}\end{equation}
One can see that \eqref{930} equals 0 if $D_2/z$ does not
divide
\[\frac {t_1^2-4}z\left(t_2^2-4\right)-\frac {k^2u^2}z,\]
i.e. if $D_2$ does not divide $\left(t_1^2-4\right)\left(t_2^2-4\right
)-k^2u^2$. In the case
\[D_2|\left(t_1^2-4\right)\left(t_2^2-4\right)-k^2u^2\]
we use the trivial estimate that \eqref{920} is $\ll D_2$.

Let us write $D_4:=\gcd\left(\left(t_1^2-4\right)\left(t_2^2-4\right
)-k^2u^2,D_3\right)$. Then it
is well-known that the Ramanujan sum \eqref{921} equals
\begin{equation}\mu\left(\frac {D_3}{D_4}\right)\frac {\phi\left(
D_3\right)}{\phi\left(D_3/D_4\right)},\label{959}\end{equation}
hence \eqref{921} is $\ll D_4$.

By the above reasoning we get that $A_u=0$ unless $z|u$ and
\[D_2|\left(t_1^2-4\right)\left(t_2^2-4\right)-k^2u^2\]
hold, and if these two conditions are true, then
\[A_u\ll\gamma\sqrt {D_1z}D_2D_4.\]
Hence using also \eqref{916}, \eqref{916,1} and \eqref{916,2}
we get that the contribution of the part of \eqref{916}
belonging to a given $K\ge 1$ is $\ll$ than
\begin{equation}\gamma\sum_{z\ge 1,K|z}\sum_{\,t_1,t_2,D,\;z|t_1^
2-4,D}\frac {\phi_0\left(\frac D{e^r}\right)}D\sum_{D_2,D_3|D,\gcd\left
(D_2,D_3\right)=1}\sum_{D_4|D_3}T\label{1000}\end{equation}
with
\[T:=\sum_{u\in\mathbb{Z},D_2D_4|\left(t_1^2-4\right)\left(t_2^2-
4\right)-k^2u^2,z|u}\sqrt {z}D_2D_4\left|G_q\left(-u,t_2\right)\right
|.\]
We used here that since we have the factor $\left(\frac {t_1^2-4}{
\lambda}\right)$ in
\eqref{916}, we may assume $\gcd\left(t_1^2-4,\lambda\right)=1$, hence
$z=\gcd\left(t_1^2-4,D\right)$. And we have $K|t_1^2-4,D$, so $K|
z$.

We now use \eqref{913,1}, \eqref{913,2}, \eqref{602,1},
\eqref{602,2}, \eqref{79,1} and \eqref{79,2}. We use also
that for given $t_1,t_2$ and $u$ the number of possible $D_2,D_4$
is $\ll X^{\epsilon}$. Fixing $D_2$ and $D_4$ we must have $D=D_2
D_4j$
with some integer $j$, and for fixed $j$ the number of
possible $D_3$ is $\ll X^{\epsilon}$. We get in this way that if $
\epsilon >0$ is
given and $\beta_1$, $\alpha$ and $\delta$ are small enough in terms of $
\epsilon$,
then the part of \eqref{916} belonging to $K>X^{\xi}$ is $\ll$ than
\begin{equation}X^{\epsilon}\left(\frac dX\right)^{3/2}\sum_{K>X^{
\xi}}\sum_{z\ge 1,K|z}\sum_{\,t_1,t_2,\;z|t_1^2-4}\sum_{u\in\mathbb{
Z},z|u}\sqrt {z}E_X\left(t_1,t_2,ku\right),\label{1001}\end{equation}
where $E_X\left(t_1,t_2,ku\right)$ denotes $B_{X,i,j_1,j_2}\left(
t_1,t_2,ku\right)$ in the case $q=1$, and
$E_X\left(t_1,t_2,ku\right)$ denotes $D_{X,i,j_1,j_2}\left(t_1,t_
2,ku\right)$ in the case $q=2$.

On the one hand, we have for every $1\le z\ll d$ that
\begin{equation}\sum_{\,t_1,t_2,\;z|t_1^2-4}\sum_{u\in\mathbb{Z},
z|u}\sqrt {z}E_X\left(t_1,t_2,ku\right)\ll X^{\epsilon}\frac {dX}{
z^{1+c_0}},\label{1002}\end{equation}
on the other hand, we have
\begin{equation}\sum_{K>X^{\xi}}\sum_{z\ge d,K|z}\sum_{\,t_1,t_2,\;
z|t_1^2-4}\sum_{u\in\mathbb{Z},z|u}\sqrt {z}E_X\left(t_1,t_2,ku\right
)\ll X^{\epsilon +\frac 32}\ll\frac {dX}{X^{c_0}}\label{985}\end{equation}
with some absolute constant $c_0>0$ if $\beta_1$, $\alpha$ and $\delta$ are
small enough in terms of $\epsilon$. We can see \eqref{1002} considering
separately the cases $z\ll\sqrt {X}$, $\sqrt {X}\ll z\ll d$. We now show
\eqref{985}. For given $z\ge d$ the number of possible $K,$ $t_1$ and $
u$ is $\ll X^{\epsilon}$,
and the number of possible $t_2$ is $\ll X^{\epsilon +\frac 12}$. The
number of such $z\ge d$ for which there is at least one $t_1$
such that $z|t_1^2-4$ is $\ll X^{\epsilon +\frac 12}$, and for every such $
z$ we
have $\sqrt {z}\ll X^{\epsilon +\frac 12}$. Indeed, this follows from the fact that
all the possible $t_1$ are $\ll X^{\epsilon +\frac 12}$. These considerations
prove \eqref{985}.

We get from \eqref{1001}, \eqref{1002} and \eqref{985} that the contribution of the part of \eqref{916}
belonging to $K>X^{\xi}$ is admissible for any $\xi >0$.

It remains to prove that the part $K\le X^{\xi}$, $N=0$ of
\eqref{916} is also admissible. We first note that in
general we have that
\[\left|T\left(0,n;c\right)\right|^2=\sum_{x,y\:{\rm m}{\rm o}{\rm d}\:
c}\left(\frac xc\right)\overline {\left(\frac yc\right)}e\left(\frac {
nx-ny}c\right).\]
We can write here $x\equiv ky$ with some $\gcd\left(k,c\right)=1$, and
then $\left|T\left(0,n;c\right)\right|^2$ equals
\[\sum_{k\:{\rm m}{\rm o}{\rm d}\:c,\gcd\left(k,c\right)=1}\left(\frac
kc\right)\sum_{y\:{\rm m}{\rm o}{\rm d}\:c,\gcd\left(y,c\right)=1}
e\left(\frac {nky-ny}c\right).\]
As below \eqref{959} we estimate the
Ramanujan sum by $\gcd\left(n\left(k-1\right),c\right)$, and we get
\[\left|T\left(0,n;c\right)\right|^2\ll\gcd\left(n,c\right)\sum_{
k\:{\rm m}{\rm o}{\rm d}\:c}\gcd\left(k-1,c\right)\ll_{\epsilon}c^{
1+\epsilon}\gcd\left(n,c\right)\]
for any $\epsilon >0$. Therefore the part $K\le X^{\xi}$, $N=0$ of
\eqref{916} can be estimated by
\begin{equation}X^{\epsilon}\sum_{\,t_1,t_2,D}\frac {\phi_0\left(\frac
D{e^r}\right)}D\sqrt {\gcd\left(t_2^2-4,D\right)}\sum_{j_1,j_2}a_{
j_1,j_2}B_0^{(q)}\label{724}\end{equation}
for any $\epsilon >0$. We used here that only $B_0^{(q)}$ depends on $
j_1$
and $j_2$. Using Lemma \ref{lem:11,95} we get that if $\alpha$ and $
\delta$ are
small enough in terms of $\epsilon$, then \eqref{724} can be estimated by
\begin{equation}X^{\epsilon}\frac {d^{5/2}}{X^{3/2}}\sum_{\,t_1,t_
2,D}\frac {\phi_0\left(\frac D{e^r}\right)A_X\left(t_1,t_2\right)
\prod_{k=1}^2f_{\beta_1}\left(\frac {t_k-a_i}{a_{i+1}-a_i}\right)}
D\sqrt {\gcd\left(t_2^2-4,D\right)}G_1\label{843}\end{equation}
for $q=1$, and by

\begin{equation}X^{\epsilon}\frac {d^{5/2}}{X^{3/2}}\sum_{\,t_1,t_
2,D}\frac {\phi_0\left(\frac D{e^r}\right)C_X\left(t_1,t_2\right)
\prod_{k=1}^2f_{\beta_1}\left(\frac {t_k-a_i}{a_{i+1}-a_i}\right)}
D\sqrt {\gcd\left(t_2^2-4,D\right)}G_2\label{844}\end{equation}
for $q=2$, with
\[G_1:=\sum_{j_1,j_2}a_{j_1,j_2}\left(B^2\left(S_0,T_0\right)-1\right
)^sh_{\beta_1,C_1\left(j_1,j_2\right)}\left(0\right),\]
\[G_2:=\sum_{j_1,j_2}a_{j_1,j_2}\left(A^2\left(S_0,T_0\right)-1\right
)^sh_{\beta_1,C_2\left(j_1,j_2\right)}\left(0\right)\]
for any $\epsilon >0$. Using trivial estimates for $G_1,G_2$ we could obtain easily
that \eqref{843} and \eqref{844} are $\ll X^{\epsilon}\frac {d^{5
/2}}{X^{1/2}}$ for any
$\epsilon >0$ if $\alpha$ and $\delta$ are small enough in terms of $
\epsilon$. Indeed,
for given $t_2$ there are $\ll X^{\epsilon}$ possible values of
$\gcd\left(t_2^2-4,D\right)$, and we have $D=j\gcd\left(t_2^2-4,D\right
)$ with an
integer $j$.

Hence it is enough to give estimates $G_1,G_2\ll X^{-\beta_0}$ with
some absolute constant $\beta_0>0$ for large enough $J$. In the case $
q=2$ it is important to use for this estimate that we
have $\left|t_1-t_2\right|\gg X^{\frac 12-\alpha}$, which follows from \eqref{678}.

But these estimates follow at once from \eqref{888} and the easily seen facts that
\begin{equation}\left(B^2\left(S_0\left(j_1,t_1\right),T_0\left(j_
2,t_2\right)\right)-1\right)^s-\left(B^2\left(S_0\left(0,t_1\right
),T_0\left(0,t_2\right)\right)-1\right)^s,\label{6711}\end{equation}
\begin{equation}\left(A^2\left(S_0\left(j_1,t_1\right),T_0\left(j_
2,t_2\right)\right)-1\right)^s-\left(A^2\left(S_0\left(0,t_1\right
),T_0\left(0,t_2\right)\right)-1\right)^s,\label{6712}\end{equation}
\begin{equation}h_{\beta_1,C_1\left(j_1,j_2\right)}\left(0\right)
-h_{\beta_1,C_1\left(0,j_2\right)}\left(0\right),\label{6713}\end{equation}
\begin{equation}h_{\beta_1,C_2\left(j_1,j_2\right)}\left(0\right)
-h_{\beta_1,C_2\left(0,j_2\right)}\left(0\right)\label{6714}\end{equation}
are $\ll X^{-\beta_0}$ with some absolute constant $\beta_0>0$ for any $
j_1$
and $j_2$, if $\alpha$, $\beta_1,\beta_2$ and $\delta$ are small enough. The
estimates \eqref{6711} and \eqref{6712} are easy to see
using \cite[(6.28), (6.29)]{Biro} (in the case of \eqref{6712} we use \eqref{741}). For the proof
of \eqref{6713} and \eqref{6714} we use that
\[C_1\left(j_1,j_2\right)_{}-\frac {\left(T_0\left(0,t_2\right)-T_
0\left(j_2,t_2\right)\right)\frac d{dT}B\left(S_0\left(0,t_1\right
),T\right)_{|T=T_0\left(0,t_2\right)}-\frac d{a_i^2}X^{\delta}}{d\left
(\left(t_1^2-4\right)\left(t_2^2-4\right)\right)^{-1/2}}\]
and
\[C_2\left(j_1,j_2\right)_{}-\frac {\left(T_0\left(0,t_2\right)-T_
0\left(j_2,t_2\right)\right)\frac d{dT}A\left(S_0\left(0,t_1\right
),T\right)_{|T=T_0\left(0,t_2\right)}-\frac {d\left|t_2-t_1\right
|X^{\delta}}{a_i\left(X-a_i^2\right)}}{d\left(\left(t_1^2-4\right
)\left(t_2^2-4\right)\right)^{-1/2}}\]
are $\ll X^{-\beta_0}$. To see these estimates we use our
conditions, \cite[(6.26)]{Biro} and the formulas given for the derivatives of
$B\left(S,T\right)$ and $A\left(S,T\right)$ given in the lines below \cite[(6.27)]{Biro}. The lemma is proved.
\end{proof}

\subsection{The final result}

It is easy to see from the following lemma that
Conjecture \ref{conj:1} implies \eqref{2.2}. Hence Theorem
\ref{thm:2} also follows from the next lemma.

\begin{lemma}\label{lem:13}Assume \eqref{201}. Then we
have that \eqref{2.2} is true with a positive $\beta$ uniformly for $
1\le\tau\le 2$ if $J$ is large enough, assuming that the following statement
is true:

{\bf Statement.} There are numbers $\delta ,\epsilon >0$ such that for every
positive integers $K,L\ll X^{\epsilon}$ satisfying $\gcd\left(K,L\right
)=1$ and
that $L=4u^2$ with some positive integer $u$ divisible by $4$, for every integers $
r_1,r_2,r_3,r_4$ such that
$\gcd\left(r_4,u\right)=1$, for every real numbers $C,a,M$ such that $
1\le C\ll X$, $1\ll M\ll CX^{\epsilon}/d$ and
\[a,\sqrt {X}-2a\gg X^{\frac 12-\epsilon},\]
for every $\kappa\in \{-1,1\}$, for every smooth function $g\left
(t_1,t_2,m,c\right)$ which is supported in
the set
\[a\le t_1,t_2\le 2a,\:M\le m\le 2M,\:C\le c\le 2C\]
and satisfies
\[a^{i_1+i_2}M^{i_3}C^{i_4}\frac {\partial^{i_1+i_2+i_3+i_4}g}{\partial
t_1^{i_1}\,\partial t_2^{i_2}\,\partial m^{i_3}\,\partial c^{i_4}}
\ll_{i_1,i_2,i_3,i_4}X^{\epsilon\left(i_1+i_2+i_3+i_4\right)}\]
for every nonnegative integers $i_1,i_2,i_3,i_4$, writing
\[f\left(t_1,t_2,m,c\right):=g\left(t_1,t_2,m,c\right)e\left(\frac {
m\sqrt {t_1^2-4}\sqrt {t_2^2-4}}{cu}h_{\kappa}\left(t_1,t_2\right
)\right)\]
with
\[h_{\kappa}\left(t_1,t_2\right):=\frac {X+\kappa\sqrt {X-\left(t_
1^2-4\right)}\sqrt {X-\left(t_2^2-4\right)}}{\sqrt {t_1^2-4}\sqrt {
t_2^2-4}},\]
we have that
\[\sum_{t_1,t_2,m,c}f\left(t_1,t_2,m,c\right)T\left(\left(t_1^2-4\right
)m^2,\overline L\left(t_2^2-4\right);c\right)\ll_{\epsilon ,\delta}
C^{3/2}X^{1-\delta},\]
where we sum under the conditions $K|t_1^2-4,c$ and
\[\,t_1\equiv r_1\left({\rm m}{\rm o}{\rm d}\,u\right),\,t_2\equiv 
r_2\left({\rm m}{\rm o}{\rm d}\,u\right),\,\,m\equiv r_3\left({\rm m}
{\rm o}{\rm d}\,u\right),\,c\equiv r_4\left({\rm m}{\rm o}{\rm d}\,
u\right).\]
\end{lemma}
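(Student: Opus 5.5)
By Lemma \ref{lem:11,94} it suffices to verify \eqref{911} for $q=1,2$. By Lemma \ref{lem:11,941} the left-hand side of \eqref{911} equals \eqref{971} up to an admissible error, and the term $N=0$ has already been removed there. The plan is to bound the remaining expression \eqref{971}, summed over all $N\neq 0$, by means of the Statement.

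\emph{Step 1: fix the outer parameters.} We fix $j_1,j_2$, the Möbius variable $K\le X^{\xi}$ and the divisor $\lambda$. We put $c:=D/\lambda$ and write $N=\lambda m$, so that $S_\lambda^{\ast}$ becomes a sum over $m\neq0$ of
\[
B^{(q)}_{\lambda m}\,e\Bigl(\tfrac{\lambda m C\overline{kD}}{\gamma}\Bigr)\,T\bigl((t_1^2-4)m^2,\overline{4\gamma^2k^2}(t_2^2-4);c\bigr).
\]
Large $\lambda$ will be handled trivially, since the factor $\sqrt{D/\lambda}^{-1}$ together with the Weil-type bound $|T|\ll c^{1/2+\epsilon}\gcd$ costs nothing. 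So we may assume $\lambda\le X^{\epsilon}$ and treat it as fixed. The remaining congruence data are: $t_1\equiv A$ and $t_2\equiv B$ modulo $\gamma$, the additive character in $m$ modulo $\gamma$, the condition $K\mid t_1^2-4,c$, and $\gcd(c,kG)=1$. We encode all of these by splitting into residue classes modulo $u$, where $u$ is a suitable multiple of $\gamma k$ chosen with $4\mid u$. We then set $L:=4u^2$. Since $\overline{4\gamma^2k^2}$ is determined by residues modulo $u$, the factor $\overline{4\gamma^2k^2}$ equals $\overline{L}$ times a factor depending only on the class of $c$ modulo $u$. That factor can be absorbed, using $T(m,ab;c)$-type multiplicativity, into a change of the class $r_4$; alternatively one chooses $u$ so that the factor is exactly $\overline L$. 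The Jacobi symbol $(t_1^2-4/\lambda)$, $\epsilon_{c}$ and $D^{-s}$ are all constant on these classes, up to smooth factors. The conditions $4\mid u$ and $\gcd(K,L)=1$ hold because $\gcd(K,kG)=1$ and $G$ is even.

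\emph{Step 2: make the analytic weight explicit.} We insert Lemma \ref{lem:11,95} for $B^{(q)}_N$. It gives the amplitude $d^{5/2}(\sqrt{t_1^2-4}\sqrt{t_2^2-4})^{-3/2}A_X$ (respectively $C_X$) times $h_{\beta_1,C_q}(\pm Nd/(kD\gamma))$ times the phase $e\bigl(\mp \tfrac{N}{kD\gamma}\sqrt{t_1^2-4}\sqrt{t_2^2-4}\,B(S_0,T_0)\bigr)$. Substituting the definitions of $S_0,T_0$ shows that $\sqrt{t_1^2-4}\sqrt{t_2^2-4}\,B(S_0,T_0)$ is $h_{+1}(t_1,t_2)\sqrt{t_1^2-4}\sqrt{t_2^2-4}$ with $X$ replaced by $X-j_1d\tau$ and $X-j_2d\tau$; the same holds for $A$ with $h_{-1}$. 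Since $j_id\tau$ is a smooth perturbation, the difference is moved into the weight $g$; its derivatives grow only like $X^{\epsilon}$, because $m\,d/(cu)\ll X^{\epsilon}$ on the essential range. The function $h_{\beta_1,C}$ decays rapidly once $|Nd/(kD\gamma)|\gg X^{\epsilon}$. So $m$ is effectively restricted to $|m|\ll CX^{\epsilon}/d$, where $C\asymp D/\lambda\ll X$. After a dyadic decomposition in $m$ and $c$ and a smooth partition in $t_1,t_2$ (the function $f_{\beta_1}$ already localizes $t_i$ to $[a_i,a_{i+1}]$, which is contained in some $[a,2a]$ with $a,\sqrt X-2a\gg X^{1/2-\alpha}$ since $i\in\mathcal I$), the sum takes exactly the shape $\sum f(t_1,t_2,m,c)\,T(\cdots)$ of the Statement. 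The normalization of $g$ is the amplitude divided by $d^{5/2}X^{-3/2}$, and Lemmas \ref{lem:5}, \ref{lem:6} show that this quotient is $\ll X^{\epsilon}$. The sign $\sigma$ and the sign of $m$ give $\kappa=\pm1$ and complex conjugates; negative $m$ are reduced to positive ones via $T(a m^2,\cdot)$ being even in $m$.

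\emph{Step 3: collect the bounds.} The Statement gives $\ll C^{3/2}X^{1-\delta}$ for each dyadic block. Multiplying by the prefactor $d^{5/2}X^{-3/2}\cdot D^{-1}\cdot c^{-1/2}\asymp d^{5/2}X^{-3/2}C^{-3/2}$, using that $\phi_0(D/e^r)$ fixes $D\asymp e^r$, yields $d^{5/2}X^{-1/2-\delta}$ up to $X^{O(\epsilon+\alpha+\xi+\beta_2)}$ losses from the number of classes, $\lambda$, $K$ and the dyadic blocks. Choosing $\alpha,\beta_1,\beta_2,\xi$ small compared with $\delta$ then gives \eqref{911}, and Lemma \ref{lem:11,94} yields \eqref{2.2}.

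The main obstacle is Step 2. One must check that after removing the exact oscillating phase, the leftover weight (which involves $j_1,j_2$, $\tau$, the cutoffs $\psi_0$ with parameters $X^{\beta_1}$, and $(B^2-1)^s$ with $|s|\le X^{\beta_2}$) satisfies the derivative bounds required of $g$ with only $X^{\epsilon}$ loss per derivative. In particular, the mixed $t$-derivatives of the phase error $m\,(j d\tau)\partial_X(\cdot)/(cu)$ must be controlled; this uses $m\ll CX^{\epsilon}/d$ and the separation $|t_1-t_2|\gg X^{1/2-\alpha}$ for $q=2$, via \eqref{741}.
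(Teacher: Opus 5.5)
Your route is the paper's. You reduce via Lemmas \ref{lem:11,94} and \ref{lem:11,941} to the $N\neq 0$ part of \eqref{971}, insert Lemma \ref{lem:11,95}, replace $B(S_0,T_0)$ and $A(S_0,T_0)$ by $h_{1}$ and $h_{-1}$ in the phase with the difference put into $g$, and apply the Statement with $c=D/\lambda$, $m=N/\lambda$. But your treatment of $\lambda$ contains a false step.

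\textbf{The reduction to $\lambda\le X^{\epsilon}$ fails.} You claim that $\lambda>X^{\epsilon}$ can be handled trivially; it cannot. Take $D\asymp X$. The $m$-sum has $\ll \gamma k D X^{\epsilon}/(\lambda d)$ terms. By Lemma \ref{lem:11,95} and Weil's bound, each term is $\ll D^{-1}(D/\lambda)^{-1/2}\cdot d^{5/2}k^{-1}X^{-3/2}\cdot (D/\lambda)^{1/2+\epsilon}$. Summing over the $\asymp X/\lambda$ values $D\equiv 0 \pmod{\lambda}$ and the $\ll X$ pairs $(t_1,t_2)$ gives about $X^{1/2}d^{3/2}\lambda^{-2}$. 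Hence the block $\lambda\sim\Lambda$ contributes about $X^{1/2}d^{3/2}\Lambda^{-1}$. Since $N\neq 0$ and $\lambda\mid N$ force $\lambda\le |N|\ll X^{1+\epsilon}/d$, even the top block already has the full trivial size $d^{5/2}X^{-1/2}$, and smaller $\lambda$ are worse. So no range of $\lambda$ is disposed of trivially.

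Consequently you cannot absorb $\lambda$ into $u$ (with $L=4u^2\ll X^{\epsilon}$) to make $\left(\frac{t_1^2-4}{\lambda}\right)$ constant on residue classes. Even for small $\lambda$ this enlargement is problematic. The requirement $\gcd(r_4,u)=1$ would need $\gcd(\lambda,D/\lambda)=1$, which can fail. Also, the extra factor in $\overline{4\gamma^2k^2}=w^2\overline{L}$ can only be moved into the first argument of $T$ (rescaling $m$), not into a change of the class $r_4$.

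\textbf{What the paper does.} It takes $u=\gamma k$ exactly, i.e.\ $L=4\gamma^2k^2$, for every $\lambda$. This choice is forced.
\begin{itemize}
\item It makes the Salié sum literally $T\bigl((t_1^2-4)m^2,\overline{L}(t_2^2-4);c\bigr)$.
\item The phase from Lemma \ref{lem:11,95} has $N/(kD\gamma)=m/(cu)$, exactly as in $f$.
\item Moreover $4\mid u$ because $16\mid\gamma$, and $\gcd(K,L)=1$ because $\gcd(K,kG)=1$.
\end{itemize}
Then $\lambda$ may be of any size. The cutoff $h_{\beta_1,C_q}(\sigma md/(cu))$ restricts $m\ll CX^{\epsilon}/d$ independently of $\lambda$, and $D^{-1}(D/\lambda)^{-1/2}=\lambda^{-1}c^{-3/2}$. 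So the Statement bounds the contribution of each fixed $\lambda$ by $\lambda^{-1}d^{5/2}X^{-1/2-\delta+O(\epsilon)}$, and $\sum_{\lambda\ll X}\lambda^{-1}\ll\log X$. Your Step 3 loses exactly this factor $\lambda^{-1}$ when it writes $D^{-1}c^{-1/2}\asymp C^{-3/2}$.

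The character $\left(\frac{t_1^2-4}{\lambda}\right)$ that led you to enlarge $u$ is then a coefficient depending on $t_1$ alone, for fixed $\lambda$. The paper's sketch does not address it explicitly. It is harmless when the Statement is derived from Conjecture \ref{conj:1}, because there $t_1,t_2,m$ are fixed and only the $c$-sum is estimated. It cannot, however, be removed by restricting $\lambda$.
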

\begin{proof}We apply Lemmas \ref{lem:11,941} and
\ref{lem:11,94}. In \eqref{971} we write $L:=4\gamma^2k^2$, $c:=\frac
D{\lambda}$,
$m:=\frac N{\lambda}$, and we apply the Statement given in the lemma
fixing $K$, $\lambda$, $j_1$ and $j_2$. We express $B_N^{(q)}$ by Lemma \ref{lem:11,95}. We write the factor
\[e\left(-\sigma\frac N{kD\gamma}\sqrt {t_1^2-4}\sqrt {t_2^2-4}A\left
(S_0\left(j_1,t_1\right),T_0\left(j_2,t_2\right)\right)\right)\]
as the product of
\[e\left(-\sigma\frac N{kD\gamma}\sqrt {t_1^2-4}\sqrt {t_2^2-4}h_{
-1}\left(t_1,t_2\right)\right)\]
and
\begin{equation}e\left(-\sigma\frac N{kD\gamma}\sqrt {t_1^2-4}\sqrt {
t_2^2-4}\left(A\left(S_0\left(j_1,t_1\right),T_0\left(j_2,t_2\right
)\right)-h_{-1}\left(t_1,t_2\right)\right)\right),\label{1007}\end{equation}
and we consider \eqref{1007} as a factor of the smooth
function $g\left(t_1,t_2,m,c\right)$. We replace similarly
$B\left(S_0\left(j_1,t_1\right),T_0\left(j_2,t_2\right)\right)$ by $
h_1\left(t_1,t_2\right)$. The lemma follows
by carefully examinig the definitions and estimating
trivially.

\end{proof}


\begin{thebibliography}{BDFHHL}




\bibitem[B1]{Biro} \textsc{A. Biró}, Local square mean in the hyperbolic circle problem,

\emph{arXiv:2403.16113v2}, to appear in Algebra and Number Theory

\bibitem[B2]{biroClass} \textsc{A. Biró} On the class number of pairs of binary quadratic forms,
\emph{J. Théor. Nombres Bordeaux} \textbf{37} (2025), no.~3, 897--924.


\bibitem[B-K-S]{Br} \textsc{T.D. Browning, V.V. Kumaraswamy, R.S. Steiner} Twisted Linnik implies Optimal Covering Exponent for $S^3$,
\emph{International Mathematics Research Notices} Vol. 2019, no. 1, 140--164.


\bibitem[G]{Granville} \textsc{A. Granville}, Smooth numbers: computational number theory and beyond, Algorithmic number theory: lattices, number fields, curves and cryptography, Math. Sci. Res. Inst. Publ., vol. 44, Cambridge Univ. Press, Cambridge, 2008, pp. 267–323



\bibitem[I]{I} \textsc{H. Iwaniec}, Topics in
classical automorphic forms, American Mathematical Society, 1997.


\bibitem[Se]{Serre} \textsc{J.-P. Serre}, A Course in Arithmetic, Graduate Texts in Mathematics, vol. 7, Springer, 1973

\bibitem[St]{St} \textsc{R.S. Steiner} On a twisted version of Linnik and Selberg conjecture on sums of Kloosterman sums,
\emph{Mathematika} \textbf{65} (2019), no.~3, 437--474.







\end{thebibliography}
\end{document}